\newtheorem{theorem}{Theorem}[section]
\newtheorem{lemma}[theorem]{Lemma}
\newtheorem{proposition}[theorem]{Proposition}
\newtheorem{corollary}[theorem]{Corollary}
\theoremstyle{definition}
\newtheorem{definition}[theorem]{Definition}
\newtheorem{cor}[theorem]{Corollary}
\theoremstyle{remark}
\newtheorem{remark}[theorem]{Remark}
\numberwithin{equation}{section}
\newcommand\vare{\varepsilon}
\newcommand\onto{\twoheadrightarrow}
\newcommand\EE{\mathbb E}
\newcommand\NN{\mathbb N}
\newcommand\RR{\mathbb R}
\newcommand\ZZ{\mathbb Z}
\newcommand\cB{\mathcal{B}}
\newcommand\cG{\mathcal{G}}
\newcommand\cH{\mathcal{H}}
\newcommand\cHH{{\hat{\mathcal H}}}
\newcommand\cS{\mathcal{S}}
\newcommand\cU{\mathcal{U}}
\newcommand\cK{\mathcal{K}}
\newcommand\cF{\mathcal{F}}
\renewcommand\b{\beta}
\newcommand\eps{\epsilon}
\newcommand\h{{\mathfrak h}}
\renewcommand\t{{\mathfrak t}}
\newcommand\s{{\mathfrak s}}
\renewcommand\k{{\mathfrak k}}
\newcommand\m{{\mathfrak m}}
\newcommand\hh{{\hat{\h}}}
\newcommand\hk{{\hat{\k}}}
\newcommand\hm{{\hat{\m}}}
\newcommand\ux{{\mathcal{U}(X)}}
\newcommand\nonterm{{\buildrel{\circ}\over{B}}}
\newcommand\bp{{\mathbf o}}
\newcommand\Aut{\operatorname{Aut}}
\newcommand\stab{{\rm stab}}
\newcommand\norm[1]{\left\|#1\right\|}
\newcommand\abs[1]{\left|#1\right|}
\newcommand\set[1]{\left\{{#1}\right\}}
\renewcommand\paragraph[1]{{\bigskip\noindent{\bf #1}.}}
\newcommand{\cat}{{\upshape CAT(0)}\xspace} 
\def\nonterm{{\cU_{NT}}}
\begin{document}
\title{The Poisson boundary of \cat cube complex groups}

\author{Amos Nevo}
\address{Department of Mathematics, Technion}
\email{anevo@tx.technion.ac.il}
\thanks{The first author was  supported 
in part by ISF  grant}

\author{Michah Sageev}
\address{Department of Mathematics, Technion}
\email{sageevm@tx.technion.ac.il}
\thanks{The second author was supported in part by ISF Grant}

\subjclass{}

\date{8 May 2011}


\keywords{}

\begin{abstract}
We consider a finite-dimensional, locally finite \cat  cube complex $X$ admitting a co-compact properly discontinuous  countable group of automorphisms $G$. We construct a natural compact metric space $B(X)$ on which $G$ acts by homeomorphisms,  the action being minimal and strongly proximal. Furthermore, for any generating probability measure on $G$, $B(X)$ admits a unique stationary measure, and when the measure has finite logarithmic moment, it constitutes a compact metric model of the Poisson boundary. We identify a dense $G_\delta$ subset  $\nonterm(X)$ of $B(X)$ 
 on which the action of $G$ is Borel-amenable, and describe the relation of these two spaces to the Roller boundary. Our construction can be used to give a simple geometric proof of Property A for the complex. 
Our methods are based on direct geometric arguments regarding the asymptotic behavior of half-spaces and their limiting ultrafilters, which are of considerable independent interest. In particular we analyze the notions of median and interval in the complex, and use the latter in the proof that $B(X)$ is the Poisson boundary
via the strip criterion developed by V. Kaimanovich \cite{k}. 
\end{abstract}

\maketitle


\section{Introduction}

 Groups appearing as lattices in the automorphism groups of  \cat cube complexes  have been a subject of considerable interest in recent years. On the one hand, the class of such groups include a broad spectrum of groups from across geometric group theory, 
 including Coxeter groups, right-angled Artin groups, certain arithmetic lattices in real hyperbolic space, as well as  small cancellation groups. On the other hand, the combinatorial nature of \cat cube complexes allows one to develop techniques and results that are sharper than those that hold in the general context of \cat spaces. The present paper is devoted to developing aspects of boundary theory for lattices in \cat cube complexes,
exhibiting useful analogies with boundary theory for lattices in semisimple Lie groups. 

To motivate this analogy, note that given a lattice subgroup of a connected semisimple Lie group with finite center, or more generally any discrete subgroup, an important tool in understanding its properties is the study of its actions on various compact homogeneous spaces of the Lie group, called boundary spaces.  Taking $SL_n(\RR)$ as an example, one considers the action of a  subgroup on the maximal boundary, namely the space $\cF_n$ of full flags on $\RR^n$, as well as on its equivariant factor spaces, the spaces of partial flags. Generally, for any semisimple algebraic group $H$, the maximal boundary of a semismple group $H$ is defined as the compact homogeneous space $H/P$, where $P$ is a minimal parabolic subgroup, and its factors are given by $H/Q$, where $Q$ is a parabolic subgroup containing $P$. For a lattice subgroup,  the action on the maximal boundary is minimal and  strongly proximal, and every generating probability measure on the lattice has a unique stationary measure. When the measure has finite logarithmic moment, the maximal boundary with the associated stationary measure constitutes a compact metric model of the Poisson boundary. In addition the maximal boundary mean-proximal and universally amenable action of $G$. For general discrete subgroups $G$ one can establish several of these properties (or natural modifications thereof) in considerable generality.
We recall that some uses of the  properties of the boundary action of discrete subgroups include  
\begin{enumerate}
\item The Tits alternative.
\item Simplicity and exactness of the reduced $C^\ast$-algebra.
\item Property $A$, a-T-menability, Baum-Connes and Novikov conjectures.
\item Patterson-Sullivan theory.
\item Super-rigidity of measure-preserving actions of higher rank lattices. 
\item Classification of boundary factors and normal subgroup theorem  for higher rank lattices.

\end{enumerate}

The first step in the systematic development of boundary theory and its applications for \cat cube complexes and their uniform lattices is the construction of the right notion of maximal boundary, and this is our goal in the present paper. 
We note that for general \cat cube complexes there already exist two natural compactifications. First, one can consider the usual visual boundary defined for any \cat complex, and second, specifically for cube complexes, one can consider the space consisting of all the ultrafilters on the partially ordered set of half-spaces, an  important compactification introduced by Roller \cite{Ro}. In general, however, neither 
of these spaces give rise to minimal or proximal actions, and the same applies to the constructions  discussed in \cite{Ka} and \cite{Ca07}.

We will give a direct geometric construction of a boundary space, which in effect singles out a compact invariant subset $B(X)$ of the Roller boundary of a cube complex $X$. The compact metric space $B(X)$  plays a role analogous to the unique compact orbit $H/P$ of a semisimple Lie group in the Satake compactification. Namely, $B(X)$ is a limit set for the action of the lattice on the larger Roller compactification, and gives rise to a minimal and strongly proximal action. For any generating measure on the lattice, the action is uniquely stationary, and in addition it realizes the Poisson boundary when the measure has  finite logarithmic moment. $B(X)$ is also is a mean proximal space for the group. We will 
identify a dense $G_\delta$ subset denoted $\nonterm(X)$ contained in $B(X)$,  on which the action is Borel-amenable, and hence universal (or measure-wise) amenable. Furthermore, $\nonterm(X)$ has measure one with respect to any stationary measure as above.  
We will use $\nonterm(X)$ to give a simple geometric proof  that  cube complexes satisfy Yu's Property $A$ in our context (see \cite{BC+} for the general case).

We note that $B(X)$ possesses, for general \cat cube complexes,  several further important structural features analogous to those of the maximal boundary of semisimple groups.  In particular, utilizing the recent product decomposition theorem for cube complexes established in \cite{CS}, the boundary $B(X)$ can be represented as a canonical product $\prod_{i=1}^r B(X_i)$ where each $X_i$ is an essential, irreducible non-Euclidean cube complex, with the action on $B(X)$ being the direct product of the action of $G$ on $B(X_i)$. The invariant $r$ appearing here will be called the split rank of the cube complex. It is  the natural generalization of the split rank  associated with a semisimple algebraic group over a local field, in the case that its Bruhat-Tits building is a product of trees, or equivalently, in the case it has no simple factor groups of split rank at least two. In particular our construction yields $2^r$ continuous equivariant boundary factors of the Poisson boundary $B(X)$ (including $B(X)$ and the trivial factor), in analogy with the $2^r$ boundary factors of a semisimple Lie group of real-rank $r$. 

\section{Basics on \cat cube complexes}

\subsection{Hyperplanes and halfspaces}
We recall basic terminology and facts about {\it \cat} cube complexes, referring for more details to    \cite{ChN}, \cite{Gu}, \cite{N},  \cite{Ro},  \cite{S1}.

\begin{definition}
A {\it \cat cube complex} is a simply-connected
combinatorial cell complex
whose closed cells are Euclidean $n$-dimensional cubes $[0,1]^n$
of various dimensions such that:
\begin{enumerate}
\item Any two cubes either have empty intersection or intersect in a single face of each.
\item The link of each $0$-cell is a {\em flag complex},
a simplicial complex such that any $(n+1)$ adjacent vertices
belong to an $n$-simplex.
\end{enumerate}
\end{definition}
Since an $n$-cube is a product of $n$ unit intervals, each $n$-cube comes equipped with $n$ natural  projection maps to the unit interval. A {\it hypercube} is the preimage of $\{\frac12\}$ under one of these projections; each $n$-cube contains $n$ hypercubes.
A {\it hyperplane} in a \cat cube complex $X$ is a subspace
intersecting each cube in a hypercube or trivially.
 Hyperplanes are said to {\it cross} if they intersect non-trivially; otherwise they are said to be {\it disjoint}.
 The {\it carrier} of a hyperplane is the union of all cubes intersecting it. 

Here are some basic facts about hyperplanes in \cat cube complexes which we will use throughout our arguments.

\paragraph{Basic  Properties:}
\begin{enumerate}
\item each hyperplane is embedded (i.e. it intersects a given cube in a single hypercube).
\item each hyperplane separates the complex into precisely two components, called {\it half-spaces}.
\item every collection of pairwise crossing hyperplanes has a non-empty intersection.
\item each hyperplane is itself a \cat cube complex.
\end{enumerate}

We shall use the $\ell_1$ metric on $X$, namely simply the metric on the vertices which assigns to two vertices the number of hyperplanes separating them. This metric is equivalent to the path metric on the 1-skeleton of $X$. 

\subsection{Ultrafilters and the Roller boundary} Let $\cH$ denote the collection of halfspaces and $\cHH$ denote the collection of hyperplanes. The collection of halfspaces comes equipped with a natural involution $\h\mapsto \h^*$, where $\h$ and $\h^*$ are the two complementary components of a given hyperplane. We denote by $\hh$ the hyperplane associated to the halfspace $\h$.  

Recall that an {\it ultrafilter} on $\cH$ is a subset  $\alpha$ of $\cH$ satisfying the following two conditions:

\begin{enumerate} \label{Ultrafilters}
\item{\bf Choice.} For every hyperplane $\hh$, either $\h\in\alpha$ or $\h^*\in\alpha$ but not both.
\item{\bf Consistency.} If $\h\in\alpha$ and $\h\subset\h^\prime$, then $\h^\prime\in\alpha$. 
\end{enumerate}

Sometimes we will want to construct an ultrafilter and this will be done by making a consistent choice of halfspaces. This means choosing halfspaces of $\cH$ according to $(1)$ and $(2)$ above.

Given two ultrafilters $\alpha$ and $\beta$  and a hyperplane $\hh$, we say that $\alpha$ and $\beta$ are {\it separated} by $\hh$ if  $\h\in\alpha$ and $\h^*\in\beta$ or $\h\in\beta$ and $\h^*\in\alpha$.

We denote by $\cU=\ux$  the collection of all ultrafilters on the collection of halfspaces $\cH$ of $X$. There is a natural embedding of the vertex set $X^{(0)}$ of $X$ into $\cU$, namely:

$$X^{(0)} \to \cU$$
$$\hskip 70pt v\hskip 5pt \mapsto \hskip 5pt\alpha_v=\{\h\in\cH \vert v\in\h\}$$
Every vertex of $v\in X$  may be viewed as ultrafilter, namely the collection of all those hyperplanes $\h$ such that $v\in\h$. We will use $\alpha_v$ to denote the ultrafilter associated to $v$. 
As noted by \cite{Gu}, when $X$ is finite dimensional, such ultrafilters are characterized by the Descending Chain Condition (DCC), namely  every descending chain of halfspaces $\h_1\supset\h_2\supset\ldots$ terminates. Such ultrafilters are called \emph{principal} ultrafilters.

 Following Roller, we may view $\cU$ as a compactification of $X^{(0)}$ in the following simple manner.  We consider the product:

$${\mathcal P}=\prod_{\hh\in\cHH} \{\h,\h^*\}$$

Since an ultrafilter is a choice for each pair $(\h,\h^*)$, we have that $\cU\subset {\mathcal P}$. The space ${\mathcal P}$ endowed with the Tychonoff topology is a compact space. It is not difficult to see then  that $\cU$ is a closed subset of ${\mathcal P}$ and is therefore compact. Moreover, Roller shows that $X^{(0)}$ is open and dense in $\cU$. 
We thus conclude that  $\cU \setminus X^{(0)}$  constitutes a compactification of $X^{(0)}$, which we refer to as the Roller boundary. 

To develop a better understanding of what $\ux$ looks like, it is useful to recall how one metrizes the product topology.  Recall that if $Y =\prod_{n=1}^\infty X_n$ is a countable product of metric spaces of uniformly bounded diameter, we may metrize the product topology as follows. Let  $f:\NN\rightarrow \RR_+$ be any decreasing positive function such that $\lim_{n\to\infty} f(n) = 0$. Then given $\mathbf{x}=(x_n), \mathbf{y}=(y_n)\in Y$, we set 

$$d(\mathbf{x},\mathbf{y}) = \sup\set{ f(n)d(x_n,y_n)\vert n>0 }$$

In our setting, the space $Y=2^\cH$ is a countable product of two element spaces, each containing the two hafspaces associated to a given hyperplane.  We simply need to describe a function $f:\cHH\to\RR$ as above. When $X$ is proper, one can order the hyperplanes using the metric on $X$. Pick a base vertex $\bp\in X$. For any hyperplane $\hh$, we let $f(\hh)=1/d(\hh,\bp)$, where $d(\hh,\bp)$  denotes the 1-skeleton distance between $\bp$ and $\hh$. That is, 

$$d(\hh,\bp) = \abs{ \{\text{hyperplanes separating } \hh  \text{ from } \bp\} } + 1 $$

 Since there are only finitely many hyperplanes a given distance from $\bp$, we have that $f$ is a decreasing function approaching $0$, as required. Explicitly, we  have for  two distinct ultrafilters $\alpha\neq \beta$
  
$$d(\alpha,\beta)=\sup\ \set{ 1/{d(\hh,\bp)}\ \vert\ \hh\ \text{separates}\ \alpha\ \text{and}\ \beta }$$

Note also that for each halfspace $\h$ we can define the following subset of $\cU$ : 

$$U_\h=\{\alpha\in\ux\vert \h\in\alpha\}$$

We refer to such a subset as an $\cU$-halfspace. The collection of all $\cU$-halfspaces  forms a sub-basis for the Tychonoff topology on $\cU$. Thus a basic open set consists of the intersection of finitely many $\cU$-halfspaces. 

\subsection{Quotients}
One can abstract the above construction in the following way. A \emph{pocset} is a poset $\Sigma$ with an order reversing involution  $*:\Sigma\to \Sigma$. The pocset $\Sigma$ is said to have the $\emph{finite interval condition}$ if for every $A<B$, there exist finitely many $C$ such that $A<C<B$. A pair of elements of $A,B\in \Sigma$ are said to be transverse if $A$ and $A^*$ are incomparable with $B$ and $B^*$. The pocset $\Sigma$ is said to have \emph{finite width} if there is a universal bound on the size of a collection of incomparable elements. If one starts with a finite dimensional cube complex, the collection of halfspaces forms a pocset which satisfies both the finite width condition and the finite interval condition. 

Given any poset $\Sigma$ with an order-reversing involution, one can consider the collection of all ultrafilters $\cU(\Sigma)$ on $\Sigma$ as in Subsection \ref{Ultrafilters}. If $\Sigma$ satisfies the finite interval condition and has finite width, then the collection of all principal ultrafilters is the vertex set of a finite dimensional \cat cube complex $X(\Sigma)$. As noted by Roller, this construction is natural in that if one starts with a \cat cube complex $X$, considers its pocset of haflspaces $\Sigma=\cH(X)$ and then considers the cube complex whose vertices are the principal ultrafilters on $\Sigma$, then $X(\Sigma)=X$. 

Let $\cH(X)$ denote the halfspaces of $X$. Suppose that $\cK\subset\cH$ is a subset of $\cH$ closed under involution.  we can consider the ultrafilters on the halfspace system $\cK$, which we denote $\cU_\cK(X)$. 

The collection of principal ultrafilters of $\cU_\cK(X)$  is the vertex set of a \cat cube complex $X_{\cK}(X)$. There is then a natural projection map. 

$$\cU(X)\to \cU_{\cK}(X)$$
$$\alpha\mapsto \alpha\cap\cK$$ 

This projection restricts to a projection on the principal ultrafilters, so that one has a projection map $X\to X_\cK$.  

\begin{lemma}\label{Quotient}
Let $X$ be finite dimensional \cat cube complex. Then the natural projection $X\to X_\cK$ is surjective.
\end{lemma}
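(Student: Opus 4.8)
The plan is to show surjectivity by producing, for any vertex $\beta$ of $X_\cK$ — i.e.\ any principal ultrafilter on $\cK$ — a principal ultrafilter $\alpha$ on the full pocset $\cH$ whose intersection with $\cK$ is $\beta$. In other words, I must extend a principal ultrafilter on $\cK$ to a principal ultrafilter on $\cH$. The natural first step is to build \emph{some} ultrafilter $\alpha$ on $\cH$ with $\alpha \cap \cK = \beta$, by making a consistent choice (in the sense of the remark following the definition of ultrafilters) on the hyperplanes \emph{not} in $\cK$, while keeping the choices dictated by $\beta$ on hyperplanes in $\cK$. The second, and main, step is to verify that one can make this extension so that the resulting ultrafilter satisfies the Descending Chain Condition, so that it is principal and hence an honest vertex of $X$ mapping onto $\beta$.

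For the first step, I would fix a base vertex $\bp\in X$ and define $\alpha$ as follows: for a hyperplane $\hh\in\cK$, put the halfspace prescribed by $\beta$ into $\alpha$; for a hyperplane $\hh\notin\cK$, put into $\alpha$ whichever of the two halfspaces $\h,\h^*$ contains $\bp$. One must check that this $\alpha$ satisfies Choice (immediate, since exactly one side is chosen for each hyperplane) and Consistency: if $\h\in\alpha$ and $\h\subset\h'$, then $\h'\in\alpha$. Consistency splits into cases according to which of $\hh,\hh'$ lie in $\cK$. When both lie in $\cK$, it follows because $\beta$ is an ultrafilter on $\cK$ and $\cK$ is closed under involution (so $\h\subset\h'$ with both in $\cK$ means $\h'$ is forced). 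When both lie outside $\cK$, $\h\subset\h'$ and $\bp\in\h$ give $\bp\in\h'$, so $\h'\in\alpha$. The mixed cases are the delicate ones: if $\hh\in\cK$, $\hh'\notin\cK$ and $\h\subset\h'$, I need the $\beta$-choice of $\h$ to be compatible with the $\bp$-choice of $\h'$; here one uses that $\beta$ is principal, so it equals $\alpha_w\cap\cK$ for some vertex $w$ of $X$, and then $\h\in\beta$ means $w\in\h\subset\h'$, while separately $\bp$ may or may not lie in $\h'$ — so in fact the base-vertex rule is \emph{not} obviously consistent, and this is where care is needed.

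This difficulty indicates that the cleaner route — and the one I would actually pursue — is to avoid the base-vertex choice for the non-$\cK$ hyperplanes and instead use a Zorn's-lemma extension: start from the partial choice given by $\beta$ on $\cK$, and extend it to a maximal consistent choice on all of $\cH$; maximality plus the Choice axiom's being about individual hyperplanes forces the choice to be total, giving an ultrafilter $\alpha$ on $\cH$ with $\alpha\cap\cK=\beta$. The remaining and genuinely main obstacle is then principality: a priori this $\alpha$ need only be an arbitrary point of the Roller compactification $\cU(X)$, not a vertex. To fix this, I would use that $X$ is \emph{finite dimensional}: I claim one can choose the extension so that $\alpha$ avoids infinite descending chains. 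Concretely, if $\h_1\supsetneq \h_2\supsetneq\cdots$ were an infinite descending chain in $\alpha$, then since $\beta$ is principal it contains no infinite descending chain of elements of $\cK$, so all but finitely many $\h_i$ lie outside $\cK$; one then modifies the extension on the tail of this chain (replacing each $\h_i$ by $\h_i^*$ for $i$ large, which is consistent with $\beta$ since those hyperplanes are not in $\cK$ and, by finite dimensionality, only finitely many constraints from $\cK$-halfspaces can be violated) to kill the chain. Making this "surgery" argument precise — showing the constraints coming from $\cK$ and from the finitely many already-fixed halfspaces can all be simultaneously satisfied while eliminating every bad chain — is the crux, and finite dimensionality (equivalently, the finite width of the pocset $\cH(X)$) is exactly what makes it work. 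Once $\alpha$ is principal it is a vertex $v$ of $X$ with image $\beta$ in $X_\cK$, establishing surjectivity on vertices; since the map $X\to X_\cK$ is a cubical map determined by its effect on vertices, surjectivity of the whole map follows.
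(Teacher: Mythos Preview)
Your proposal correctly isolates the task --- extend a principal ultrafilter $\beta$ on $\cK$ to a principal ultrafilter on all of $\cH$ --- and you are right to abandon the na\"{\i}ve base-vertex rule. But the Zorn-plus-surgery route has a real gap at precisely the step you label ``the crux.'' You assert that finite width lets one flip the tails of all bad descending chains while staying consistent with $\beta$, yet you give no mechanism. Several things go wrong: there may be infinitely many bad chains, not one; flipping the tail of one chain alters the ultrafilter on infinitely many hyperplanes and can create new inconsistencies or new bad chains; and your claim that ``only finitely many constraints from $\cK$-halfspaces can be violated'' does not follow from finite width, which bounds the size of a \emph{pairwise-transverse} family, not the number of $\cK$-halfspaces nested with a given $\h_i$. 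As written, the surgery is a hope, not an argument, and it is not clear how to turn it into one without essentially starting over.

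The paper takes a completely different and much more direct route: induction on $\dim X$. Given $\beta$ principal on $\cK$, choose a \emph{minimal} $\h_0\in\beta$. The hyperplanes of $X$ meeting $\hh_0$ are exactly the hyperplanes of the cube complex $\hh_0$, whose dimension is one lower; by induction the restriction of $\beta$ to those extends to a vertex $v\in\hh_0$, and one then takes $w$ to be the endpoint in $\h_0$ of the edge of $X$ with midpoint $v$. For any $\hh\in\cK$ disjoint from $\hh_0$, minimality of $\h_0$ in $\beta$ forces $\beta$ to choose the side of $\hh$ containing $\hh_0$, and $w$ lies on that side automatically; so $\alpha_w\cap\cK=\beta$. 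Finite dimensionality enters only as the inductive parameter, not via any chain-repair procedure. If you want to rescue your approach, the honest fix is to replace the surgery by exactly this induction.
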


\begin{proof}
We need to show that given a consistent choice of half spaces for $\cK$ satisfying  DCC, it can be extended to a consistent choice of halfspaces for half spaces in $\cH$ satisfying  DCC. We do this by induction on the dimension on the dimension of $X$. 

So let $\alpha$ be a principal ultrafilter on $\cK$. We wish to extend it to an ultrafilter on $\cH$. 

When $X$ is 1-dimensional, let $\k$ be a minimal element in $\alpha$. The halfspace $\k$ is associated to some edge $e$. Let $v$ be a vertex which is the endpoint $e$ contained in $\k$. It is now clear that the ultrafilter $\alpha_v$
is the desired ultrafilter. 

We now proceed by induction. Again letting $\alpha$ be a principal ultrafilter on $\cK$, we choose a minimal halfspace $\h_0\in\alpha$. Now the hyperplanes of $\hat\cH$ are divided into those that meet 
$\hh_0$ and those that do not. We focus first on those that meet $\hh_0$. These correspond to hyperplanes in $\hh_0$, viewed as a \cat cube complex in its own right. By induction,  for  the collection of halfspaces associated to these hyperplanes, we have that there exists some vertex $v\in\hh_0$ for which 

$$\alpha_v=\{\h \vert v\in\h\ \text{ and } \hh\cap\hh_0\not=\emptyset\}$$

We now let $e$ be the edge of $X$ whose midpoint is $v$ and let $w$ be the endpoint of $e$ which is contained in $\h_0$. Then $\alpha_w$ is our desired ultrafilter. 
\end{proof}

We then have the following useful corollary.

\begin{corollary}\label{NonTrivialIntersection}
Let $X$ be a finite dimensional \cat cube complex and let $\alpha\subset\cH$ be a subset satisfying the choice and consistency conditions and satisfying DCC, then 
$$\bigcap_{\h\in\alpha}\h\not=\emptyset.$$
\end{corollary}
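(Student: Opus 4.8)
The plan is to derive Corollary \ref{NonTrivialIntersection} from Lemma \ref{Quotient} by choosing the subset $\cK$ appropriately and then invoking the fact that principal ultrafilters correspond to actual vertices of $X$. The point is that the hypotheses on $\alpha$ — choice, consistency, and DCC — are exactly the hypotheses needed to view $\alpha$ as a principal ultrafilter on some halfspace system, and once $\alpha$ comes from a genuine vertex, the intersection of the halfspaces in it contains that vertex and is therefore nonempty.

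More precisely, first I would take $\cK = \cH$ itself in the setup preceding Lemma \ref{Quotient}; then $\alpha$ is by hypothesis an ultrafilter on $\cK = \cH$ satisfying DCC, i.e.\ a principal ultrafilter on all of $\cH$. By the characterization recalled earlier (following \cite{Gu}), a principal ultrafilter on the full pocset $\cH(X)$ of a finite-dimensional complex is precisely the ultrafilter $\alpha_v$ associated to some vertex $v \in X^{(0)}$. Alternatively — and this is the role Lemma \ref{Quotient} plays if one does not want to quote that characterization directly — one can run the surjectivity argument of Lemma \ref{Quotient} with a trivial $\cK$, or simply re-use its inductive core: given the minimal halfspace $\h_0 \in \alpha$ (which exists by DCC), pass to the hyperplane $\hh_0$ as a cube complex in its own right, extract inductively a vertex $v \in \hh_0$ realizing the induced ultrafilter, and then take the endpoint $w$ of the dual edge lying in $\h_0$; this $w$ is a vertex with $\alpha_w = \alpha$.

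Having produced a vertex $v$ with $\alpha = \alpha_v = \{\h \in \cH \mid v \in \h\}$, the conclusion is immediate: by definition $v \in \h$ for every $\h \in \alpha$, so $v \in \bigcap_{\h \in \alpha} \h$, and in particular that intersection is nonempty. (If one wishes to emphasize that the intersection is taken inside $X$ rather than just over vertices, note that each halfspace $\h$ is a subcomplex containing $v$, so $v$ lies in the intersection as a point of $X$.)

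The only real subtlety — the step I expect to need the most care — is making sure the reduction to ``$\alpha$ is principal on the full system'' is legitimate: one must check that DCC for $\alpha$ as a subset of $\cH$ is exactly the condition isolated by Gerasimov/Guralnik that singles out vertex-ultrafilters among all ultrafilters, and that finite-dimensionality is being used (it guarantees the finite-width condition that makes $X(\cH)$ a genuine finite-dimensional cube complex equal to $X$). Once that is granted, everything else is bookkeeping, and in fact the cleanest writeup is simply: ``Take $\cK = \cH$ in Lemma \ref{Quotient}; then $\alpha$ is a principal ultrafilter on $\cH$, hence equals $\alpha_v$ for some vertex $v$, and $v \in \bigcap_{\h\in\alpha}\h$.''
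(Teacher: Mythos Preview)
Your reading of the hypotheses is too narrow, and this is the whole point of the corollary. You take ``the choice condition'' to mean that $\alpha$ selects a halfspace for \emph{every} hyperplane of $X$, so that $\alpha$ is already a full ultrafilter on $\cH$; under that reading the statement is trivial (principal ultrafilter $=$ vertex), and your choice $\cK=\cH$ makes Lemma~\ref{Quotient} do no work. But the paper intends $\alpha$ to be a \emph{partial} consistent choice: for each hyperplane at most one of $\h,\h^*$ lies in $\alpha$, consistency holds among those choices, and DCC holds. This is clear from the paper's own proof, which begins by setting $\cK=\{\h\in\cH\mid \h\in\alpha\text{ or }\h^*\in\alpha\}$ --- a step that would be vacuous if $\alpha$ were already defined on all of $\cH$ --- and it is confirmed by the later application in Proposition~\ref{GenericOpen}, where the corollary is invoked for $\alpha\cap\beta$, which manifestly makes no choice on hyperplanes separating $\alpha$ and $\beta$.

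So the genuine content you are missing is the extension step: one must pass from the partial filter $\alpha$ (a principal ultrafilter on the smaller involution-closed pocset $\cK$, hence a vertex of $X_\cK$) to an actual vertex of $X$ lying in every $\h\in\alpha$. That is precisely what the surjectivity of $X\to X_\cK$ in Lemma~\ref{Quotient} provides, and why the paper's proof chooses $\cK$ as it does rather than $\cK=\cH$. Your write-up never produces such a vertex in the partial case, so as it stands it does not prove the corollary in the generality the paper needs.
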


\begin{proof}
Let $\cK=\{\h\in\cH\vert \h\in\alpha \text{ or } \h^*\in\alpha\}$. Then $\cK$ is an involution invariant subset and we may construct the space $X_\cK$. The subset $\alpha$ is now an ultrafilter on $\cK$ and since it satisfies DCC, it corresponds to a vertex in $X_\cK$. Now by Lemma \ref{Quotient} the map $X\to X_\cK$ is surjective, so there exist vertices of $X$ mapping to $\alpha$. These vertices lie in $\bigcap_{\h\in\alpha}\h,$ as required. 
\end{proof}

A particular example of this type of projection occurs when one eliminates a single hyperplane: $\cK=\cH - \{\h,\h^*\}$.  In $X$, the collection of all cubes intersecting $\hh$ is called the \emph{carrier} of $\hh$, which we denote $C(\hh)$,  and naturally has a product structure $\hh\times [0,1]$. One then has a natural collapsing map $\hh\times [0,1]\to \hh\times \{0\}$. One can apply this to the carrier of $\hh$ to obtain a quotient $\overline{X}$  of $X$, whose hyperplanes are $\cH - \{\hh\}$. This quotient space $\overline{X}$ is the cube complex associated to the pocset $\cH - \{\h,\h^*\}$. In this instance we call the map $X\to \overline{X}$ a \emph{collapsing} map. 
We also use the term collapsing for the quotient obtained by removing finitely many halfspaces.

\subsection{Pruning}
When a group acts on a tree, it is useful to pass to a minimal invariant subtree. For a \cat cube complex, there is a similar process, described in \cite{CS}, which we now describe. A half-space is called \emph{deep} if it contains arbitrarily large balls. A hyperplane is called \emph{essential} if both of its associated halfspaces are deep. A cube complex is called \emph{essential} if all of its hyperplanes are essential.

We then have a the following result (described in more detail in \cite{CS})

\begin{theorem}
Let $X$ be a \cat cube complex with cocompact automorphism group. Then there exists a canonical essential \cat cube complex $X_{ess}$ and a $G$-equivariant map $f:X\to X_{ess}$ such that
\begin{itemize}
\item the preimage under $f$ of the collection of hyperplanes of $X_{ess}$ is the collection of essential hyperplanes of $X$
\item $f$ is bounded-to-one. 
\end{itemize}
\end{theorem}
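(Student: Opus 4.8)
The plan is to obtain $X_{ess}$ as the collapse of $X$ along its non-essential hyperplanes, using the quotient construction of the preceding subsection. Write $\cE\subset\cH$ for the set of \emph{essential halfspaces}, i.e.\ those $\h$ for which both $\h$ and $\h^*$ are deep; this set is invariant under the involution $*$ and, since deepness is preserved by automorphisms, under $G$. I would set $X_{ess}:=X_\cE$ and let $f\colon X\to X_{ess}$ be the projection $\alpha\mapsto\alpha\cap\cE$ on principal ultrafilters; this is well defined because the restriction to $\cE$ of a DCC choice is again a DCC choice, it is $G$-equivariant by construction, and it is surjective by Lemma~\ref{Quotient}. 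Extending the description of collapsing maps from the preceding subsection cube by cube, one sees that $f$ is realized by a cellular map which is an isomorphism on edges dual to essential hyperplanes and collapses edges dual to non-essential ones. By construction the hyperplanes of $X_\cE$ are exactly the essential hyperplanes of $X$, and $v$ lies in an essential halfspace $\h$ if and only if $f(v)$ lies in the corresponding halfspace of $X_{ess}$; hence the $f$-preimage of a hyperplane of $X_{ess}$ is precisely the corresponding essential hyperplane of $X$. This already gives the first bullet.

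The geometric core of the argument — and essentially the only point where cocompactness is used — is a uniform bound on how shallow a non-deep halfspace can be, which I would take from \cite{CS}: there is a constant $D=D(X,G)$ such that every non-deep halfspace $\h$ is contained in the $D$-neighbourhood of its hyperplane $\hh$. (Equivalently, a shallow halfspace contains only vertices within bounded distance of its hyperplane; it is the \emph{uniformity} of the bound that cocompactness supplies, via a limiting argument on group translates.) Combining this with local finiteness and cocompactness I would fix a second constant $M=M(X,G)$ bounding the number of hyperplanes lying within distance $D$ of any single vertex of $X$, using that balls of radius $D$ have uniformly bounded cardinality and vertices uniformly bounded degree.

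Granting these constants, the remaining two bullets reduce to counting hyperplanes. The key observation is that \emph{any two vertices of $X$ are separated by at most $2M$ non-essential hyperplanes}: if a non-essential hyperplane $\hk$ separates $v$ from $w$, one of its two halfspaces is non-deep, hence within $D$ of $\hk$, and since $v$ and $w$ lie on opposite sides of $\hk$ at least one of them lies within distance $D$ of $\hk$; there being at most $M$ hyperplanes within $D$ of $v$ and at most $M$ within $D$ of $w$, the bound follows. Bounded-to-oneness is then immediate: if $v,w\in f^{-1}(\bar v)$ then $\alpha_v$ and $\alpha_w$ agree on $\cE$, so $v$ and $w$ are separated only by non-essential hyperplanes, whence $d_X(v,w)\le 2M$; thus every fibre of $f$ lies in a ball of radius $2M$, and such balls have cardinality bounded independently of their centre by cocompactness and local finiteness (and this bound on vertex fibres, with local finiteness, gives bounded-to-oneness of $f$ as a map of cube complexes).

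It remains to check that $X_{ess}$ is essential. For a vertex $x$ of $X$ and an essential hyperplane $\hh$, the hyperplanes of $X_{ess}$ separating $f(x)$ from $\hh$ are exactly the images of the essential hyperplanes of $X$ separating $x$ from $\hh$; moreover a \emph{non-essential} hyperplane $\hk$ separating $x$ from $\hh$ has a non-deep halfspace which either contains $x$ or contains a vertex on the carrier of $\hh$, so by the count above there are at most $2M$ such $\hk$. Hence $f(x)$ is separated from $\hh$ in $X_{ess}$ by at least $r-2M$ hyperplanes, where $r$ is the number of hyperplanes separating $x$ from $\hh$ in $X$. Now let $\bar\h$ be any halfspace of $X_{ess}$; it corresponds to an essential — hence deep — halfspace $\h$ of $X$, so there are vertices $x_n\in\h$ separated from $\hh$ by at least $n$ hyperplanes. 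Then $f(x_n)\in\bar\h$ and $f(x_n)$ is separated from the hyperplane of $\bar\h$ by at least $n-2M$ hyperplanes, so $\bar\h$ contains vertices arbitrarily far from its hyperplane and therefore contains arbitrarily large balls; hence $\bar\h$ is deep and $X_{ess}$ is essential. The only genuine obstacle in this plan is the uniform shallowness estimate of the second paragraph; everything downstream is bookkeeping with the collapse.
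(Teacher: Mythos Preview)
The paper does not actually prove this theorem: it is stated in \S2.4 with the parenthetical ``described in more detail in \cite{CS}'' and no argument is given. So there is no in-paper proof to compare against; your write-up supplies exactly the kind of argument the paper is outsourcing.

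Your approach is the standard one and is correct. Defining $X_{ess}$ as the quotient $X_\cE$ associated to the involution-invariant sub-pocset of essential halfspaces, and realizing $f$ as the collapsing map that kills the non-essential hyperplanes, is precisely the construction alluded to in \S2.3--2.4 and carried out in \cite{CS}. The one substantive input you flag --- a uniform $D$ such that every non-deep halfspace lies in the $D$-neighbourhood of its bounding hyperplane --- is indeed the point where cocompactness enters, and your sketch (finitely many $\Aut(X)$-orbits of halfspaces, and the shallowness radius is an orbit invariant) is the right reason. Once $D$ and $M$ are in hand, your ``at most $2M$ non-essential hyperplanes separate any two vertices'' lemma cleanly delivers both bounded-to-oneness of $f$ and essentiality of $X_{ess}$; the latter argument (comparing $d_X(x,\hh)$ with $d_{X_{ess}}(f(x),\hh)$ and losing at most $2M$) is correct, using that a hyperplane separating $x$ from $\hh$ is disjoint from $\hh$, so its shallow side contains either $x$ or a fixed vertex on the carrier of $\hh$.

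One small clarification worth making explicit: you are tacitly using local finiteness (for the existence of $M$ and the bounded cardinality of metric balls). The paper's standing hypotheses include this, but the theorem as stated in \S2.4 does not, so it is worth noting.
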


\noindent{\bf Remark.} A natural subclass of \cat cube complexes is the collection of those which have  \emph{extendible geodesics}, which means that every finite geodesic path can be extended to a bi-infinite geodesic. It is easy to see that such complexes are essential. 

We call $Y$ the essential quotient of $X$. The map $f$ above is easily seen to extend to a bounded-to-one map $\cU(X)\to\cU(X_{ess})$. In the rest of the paper, we will pass to this essential quotient and work with it.

\subsection{Products}
As was discussed in \cite{CS}, a decomposition of $X$ as a product $X=X_1\times X_2$ corresponds to a decomposition of $\cHH$ as a disjoint union $\cHH=\cHH_1\cup\cHH_2$ where every hyperplane in $\cHH_1$ intersects every hyperplane in $\cHH_2$. A \cat cube complex is called \emph{irreducible} if it does not decompose as a product.  We recall the following theorem, proved in \cite{CS}.

\begin{theorem}[Product Decomposition Theorem] Every finite dimensional \cat cube complex admits a canonical (up to permutation of factors) decomposition as a finite product of irreducible 
\cat cube complexes. 
\end{theorem}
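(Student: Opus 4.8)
The plan is to read the decomposition straight off the combinatorics of the halfspace pocset $\cH$, using the dictionary recalled in the Products subsection: two‑factor product decompositions of $X$ correspond to partitions $\cHH=\cHH_1\sqcup\cHH_2$ in which every hyperplane of one block crosses every hyperplane of the other. Accordingly, I would form the graph $\Gamma$ on the vertex set $\cHH$ whose edges join distinct hyperplanes that are \emph{disjoint} (i.e. do \emph{not} cross); in pocset terms, $\hh$ and $\hk$ are $\Gamma$‑adjacent exactly when $\h$ and $\k$ are not transverse, that is, some halfspace of one contains a halfspace of the other. Let $\cHH_1,\dots,\cHH_r$ be the connected components of $\Gamma$ and put $\cK_i=\{\h\in\cH\mid \hh\in\cHH_i\}$, an involution‑invariant subsystem with associated \cat cube complex $X_{\cK_i}$. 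The claim to establish is that $X\cong X_{\cK_1}\times\dots\times X_{\cK_r}$ is the asserted decomposition.

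First I would check $r<\infty$. Selecting one hyperplane from each component yields a family that pairwise crosses, since two distinct hyperplanes in different components are not $\Gamma$‑adjacent and hence not disjoint; lifting to halfspaces gives a pairwise transverse, in particular pairwise incomparable, subset of $\cH$, and the finite width of the halfspace pocset of a finite‑dimensional complex bounds its size (indeed $r\le\dim X$, as $k$ pairwise crossing hyperplanes span a $k$‑cube). Next, for the product realization: by construction, hyperplanes drawn from two different $\cHH_i$ cross, so $\cH$ is the disjoint union $\cK_1\sqcup\dots\sqcup\cK_r$ with any two halfspaces from different pieces transverse. This makes an ultrafilter $\alpha$ on $\cH$ the same thing as a tuple $(\alpha\cap\cK_1,\dots,\alpha\cap\cK_r)$ of ultrafilters on the $\cK_i$ --- the Choice condition is imposed one hyperplane at a time, and the Consistency condition only ever relates nested pairs of halfspaces, whose hyperplanes are disjoint and therefore lie in a single $\cK_i$. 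The same remark shows every descending chain of halfspaces stays inside one $\cK_i$, so $\alpha$ satisfies DCC iff each $\alpha\cap\cK_i$ does; hence the vertices (principal ultrafilters) of $X$ factor as tuples of vertices of the $X_{\cK_i}$, and since an edge flips exactly one hyperplane and cubes are spanned by pairwise crossing families, the cubical structures match --- this is exactly the Products correspondence of \cite{CS}, applied inductively on $r$.

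With this in place, irreducibility of each factor is immediate: a nontrivial product decomposition of some $X_{\cK_i}$ would split $\cHH_i$ into two nonempty blocks with every hyperplane of one crossing every hyperplane of the other, hence no $\Gamma$‑edge across the split, contradicting the connectedness of $\cHH_i$. Canonicity follows by the same token: given any decomposition of $X$ into irreducibles, the correspondence yields a partition $\cHH=\cM_1\sqcup\dots\sqcup\cM_s$ with hyperplanes in distinct blocks always crossing, so no $\Gamma$‑edges run between blocks and each $\cM_j$ is a union of $\Gamma$‑components; were some $\cM_j$ the union of two or more components, splitting it into two mutually crossing nonempty parts would display the corresponding factor as a nontrivial product --- impossible --- so each $\cM_j$ is a single component, forcing $s=r$ and $\{\cM_j\}=\{\cHH_i\}$ up to reindexing. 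In particular the decomposition is canonical, hence $G$‑invariant up to permutation of factors, which is what is needed for the product decomposition of $B(X)$ later.

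The step I expect to be the main obstacle is the middle one: verifying that the purely order‑theoretic splitting $\cH=\cK_1\sqcup\dots\sqcup\cK_r$ genuinely realizes $X$ as the \emph{geometric} product $\prod X_{\cK_i}$ --- that the vertex bijection above is a cubical isomorphism (and respects the median), and, for the boundary applications, that it extends to a homeomorphism $\cU(X)\cong\prod\cU(X_{\cK_i})$. The ultrafilter bookkeeping reduces this to routine checks, but it is precisely here that the hypotheses enter: finite dimensionality (for finite width, and for the DCC characterization of vertices due to \cite{Gu}), and the Helly‑type property that every collection of pairwise crossing hyperplanes has nonempty intersection.
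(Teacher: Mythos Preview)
The paper does not supply its own proof of this theorem; it is quoted from \cite{CS} and simply recalled here. Your argument is the standard one and is essentially the proof given in \cite{CS}: partition $\cHH$ into the connected components of the ``non-crossing'' graph, use finite dimension to bound the number of components, and read off the product structure from the transversality of halfspaces in distinct components. The irreducibility and uniqueness arguments you give are also the expected ones.

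One small point worth tightening in your write-up: you assert that $k$ pairwise crossing hyperplanes ``span a $k$-cube'', but what Basic Property~(3) gives directly is only that their intersection is nonempty; the existence of an actual $k$-cube in $X$ (hence the bound $r\le\dim X$) then follows from the flag condition on links, or alternatively from finite width of the halfspace pocset as you also mention. Either justification is fine, but make clear which one you are invoking. Otherwise the sketch is correct and matches the cited source.
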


We note that the canonical property of this decomposition is that up to passing to a subgroup of finite index, $Aut(X)$ preserves the decomposition.
We now simply observe that an ultrafilter on $X$ gives rise (by restriction) to an ultrafilter on each of the irreducible factors. Conversely, a choice of an ultrafilter on each factor gives us an ultrafilter on $X$. Thus, if $X\cong\prod_{i=1}^n X_i$ is the canonical product decomposition of $X$, we  have an identification $\cU(X)\cong\prod_{i=1}^n \cU(X_i)$.

An unbounded cocompact  \cat cube complex is called \emph{Euclidean} if it contains an $Aut(X)$-invariant flat, otherwise it is called \emph{non-Euclidean}. Only the non-Euclidean factors will play an important role in the description of the boundary. It is thus useful to separate all the Euclidean factors from all the non-Euclidean ones. We summarize this section as follows. 

\begin{corollary}
\label{DeRahmDecomposition}
Let $X$ be an unbounded, proper cocompact  \cat cube complex. Then 
\begin{enumerate}
\item\label{Essential} $X$ admits a bounded-to-one $Aut(X)$-equivariant essential quotient $X_{ess}$  
\item $X_{ess}$ admits an $Aut(X)$-invariant decomposition $X_{ess}=X_P\times X_E$, where $X_E$ is Euclidean and $X_P$ is a product of irreducible non-Euclidean complexes. 
\end{enumerate}
\end{corollary}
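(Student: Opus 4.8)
The plan is to simply assemble the statement from the three structural results already recorded in this section, so the "proof" is really a matter of chaining them in the right order and checking that the relevant group-equivariance survives each step. First I would invoke the pruning theorem (the theorem preceding the Remark in the Pruning subsection) applied to $X$ with $G = \Aut(X)$ acting cocompactly: this produces the canonical essential quotient $X_{ess}$ together with a $G$-equivariant, bounded-to-one map $f : X \to X_{ess}$ whose fibers over hyperplanes are exactly the essential hyperplanes of $X$. Since the construction of $X_{ess}$ is canonical, it is automatically $\Aut(X)$-equivariant, which is precisely claim \eqref{Essential}. I should note here that unboundedness and properness of $X$ are used only to guarantee that $X_{ess}$ is itself unbounded and proper (so that the subsequent decomposition is non-trivial and the pocset has the finite-width / finite-interval properties needed throughout).

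Next I would apply the Product Decomposition Theorem to the essential complex $X_{ess}$, obtaining a canonical decomposition $X_{ess} \cong \prod_{i=1}^n X_i$ into irreducible \cat cube complexes, canonical up to permutation of the factors. By the remark following that theorem, $\Aut(X)$ — hence $\Aut(X_{ess})$, into which $\Aut(X)$ maps since the essential quotient is canonical — preserves this decomposition after passing to a finite-index subgroup; more to the point, the partition of the factors into Euclidean and non-Euclidean ones is a genuinely canonical feature (being Euclidean, i.e.\ carrying an $\Aut$-invariant flat, is an isomorphism-invariant property of an irreducible factor), so the automorphism group permutes Euclidean factors among themselves and non-Euclidean factors among themselves. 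Grouping accordingly, I set $X_E$ to be the product of the Euclidean irreducible factors and $X_P$ to be the product of the non-Euclidean ones, giving the $\Aut(X)$-invariant splitting $X_{ess} = X_P \times X_E$ of claim (2).

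The only genuine point requiring a sentence of care — and the place I would expect a referee to push — is the equivariance bookkeeping: the Product Decomposition Theorem as quoted only gives invariance of the decomposition up to a finite-index subgroup of $\Aut(X)$, whereas the corollary asserts an $\Aut(X)$-invariant splitting $X_P \times X_E$. The resolution is that the coarser partition into a single Euclidean block and a single non-Euclidean block \emph{is} preserved by the full group, because any automorphism permutes the irreducible factors while respecting the (isomorphism-invariant) dichotomy Euclidean vs.\ non-Euclidean, and a permutation that preserves a set-partition into two blocks preserves each block's \emph{product}. So no passage to a finite-index subgroup is needed for the two-block statement, even though it was needed to rigidify the full factorization. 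Everything else is a direct citation, so I would keep the written proof to three or four sentences.
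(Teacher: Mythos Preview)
Your proposal is correct and follows exactly the approach the paper intends: the corollary is stated as a summary of the section (``We summarize this section as follows''), with no proof given, so chaining the pruning theorem and the Product Decomposition Theorem is precisely what is expected. Your extra paragraph on the equivariance bookkeeping---observing that although the full irreducible decomposition is only preserved by a finite-index subgroup, the coarser two-block partition $X_P \times X_E$ is preserved by all of $\Aut(X)$ because the Euclidean/non-Euclidean dichotomy is an isomorphism invariant of an irreducible factor---is a genuine clarification the paper glosses over, and it is correct.
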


Part (1) of the corollary tells us that if we are considering proper cocompact actions on \cat cube complexes, we may pass to actions on essential cube complexes. Note that then each of the factors described in Part (2) of the corollary are also essential. An unbounded essential \cat cube complex whose irreducible factors are all non-Euclidean will be called a \emph{strictly non-Euclidean complex}. 
\subsection{Splitting off the Euclidean factor}

Consider now a group $G$ acting properly and cocompactly on an essential complex $X=X_P\times X_E$. We wish to obtain a corresponding splitting of $G$; that is, we aim to show that uniform lattices in 
$\Aut(X)$ are reducible. Such a result is true in a much more general setting, due to work of  \cite{CM}. In our setting, the matter is simplified by the following fact. 

\begin{lemma}
Let $X$ be a Euclidean complex, then $\Aut(X)$ is discrete.
\end{lemma}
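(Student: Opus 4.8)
The plan is to show that the pointwise stabilizer of a single vertex is trivial, which immediately gives discreteness of $\Aut(X)$ in the permutation (pointwise-convergence) topology. Recall that $X$ being Euclidean means it is unbounded, cocompact, and contains an $\Aut(X)$-invariant flat $F$; the key structural consequence I would extract first is that such a complex must in fact be a single flat, or at least that its essential part is, so that $\cHH$ carries very rigid combinatorics. More precisely, I would first argue that the $\Aut(X)$-invariant flat forces every hyperplane to be parallel to one of finitely many ``directions''; since parallel hyperplanes in a cube complex are linearly ordered and, in a Euclidean flat, their union of midcubes is convex, the set of hyperplanes in each parallelism class is combinatorially a copy of $\ZZ$ (an $\Aut(X)$-invariant bi-infinite sequence of parallel hyperplanes), and hyperplanes in distinct classes all cross. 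This is essentially the statement that a strictly Euclidean essential complex is a product of lines $\prod_{i=1}^k \RR$ subdivided into unit cubes; I would cite or reprove this from the product decomposition theorem together with the definition of Euclidean.

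Next, I would use this product-of-lines picture: $\Aut(X)$ permutes the $k$ parallelism classes and acts on each class $\cHH_i \cong \ZZ$ by an order-automorphism, i.e.\ by a translation or a flip. Fixing a vertex $v$ pins down, for each class $i$, which of the two half-spaces bounded by each hyperplane in $\cHH_i$ contains $v$; since the half-spaces in class $i$ form a chain $\cdots \supset \h_{-1} \supset \h_0 \supset \h_1 \supset \cdots$, knowing $v$'s side of each one determines $v$'s ``coordinate'' in that line exactly, and determines the indexing up to nothing: an automorphism fixing $v$ must fix each half-space in each class setwise (it cannot flip a class, since a flip fixing one vertex of $\ZZ$ would move the adjacent vertices and hence change which half-spaces contain $v$), and cannot permute the classes in a way that moves $v$. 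Hence an automorphism $g$ fixing $v$ fixes every half-space, i.e.\ $g$ acts trivially on $\cH$.

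Finally, an automorphism acting trivially on $\cH = \cH(X)$ is the identity: by Lemma~\ref{Quotient}/Corollary~\ref{NonTrivialIntersection}'s correspondence between vertices and principal ultrafilters, the vertex set of $X$ is exactly the set of DCC ultrafilters on $\cH$, and $g$ acts on vertices by $\alpha_w \mapsto \alpha_{gw} = \{g\h : \h \in \alpha_w\}$; if $g$ fixes every half-space then $\alpha_{gw} = \alpha_w$ for all $w$, so $gw = w$ for all vertices $w$, and since automorphisms of a cube complex are determined by their action on the $0$-skeleton, $g = \mathrm{id}$. Therefore the vertex stabilizer is trivial, and $\Aut(X)$ is discrete. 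The main obstacle I anticipate is the first step: rigorously pinning down that an $\Aut(X)$-invariant flat in an unbounded cocompact complex forces the strictly Euclidean factor to be literally a subdivided product of real lines (as opposed to merely containing a flat). If one does not want to lean on the product decomposition theorem for this, the alternative is to argue directly that any hyperplane not parallel to a flat direction would, by cocompactness, produce translates violating invariance of the flat — but invoking \cite{CS} as the excerpt already does for the decomposition seems cleanest.
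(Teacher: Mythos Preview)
Your argument has a genuine gap: the claim that the stabilizer of a single vertex is trivial is false. Already for $X = \RR$ (the simplicial line) the reflection $n \mapsto -n$ fixes the vertex $0$ but is not the identity; in $\RR^k$ the stabilizer of the origin is the full hyperoctahedral group $(\ZZ/2)^k \rtimes S_k$. Your specific reasoning --- that a flip fixing $v$ ``would move the adjacent vertices and hence change which half-spaces contain $v$'' --- does not work: if $gv = v$ then $v \in \h \iff v \in g\h$, so $g$ permutes the set $\alpha_v$ of half-spaces containing $v$ as a whole but need not fix each one individually. Concretely, the flip of the line swaps the half-space $\{n \le 0\}$ (bounded by the hyperplane at $\tfrac12$) with the half-space $\{n \ge 0\}$ (bounded by the hyperplane at $-\tfrac12$), and both contain $0$.

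The approach can be repaired by weakening the target: granted the product-of-lines description, the stabilizer of a vertex is \emph{finite}, and since point stabilizers are open in $\Aut(X)$ for a locally finite complex this already suffices for discreteness. The paper, however, takes a more direct route that avoids the structural classification entirely. It works with the invariant flat $\EE \subset X$: by cocompactness $X$ lies in an $R$-neighbourhood of $\EE$, so (using essentiality) every hyperplane crosses $\EE$; a small neighbourhood $U$ of the identity in $\Aut(X)$ fixes a bounded open set pointwise, hence fixes an open piece of the $\Aut(X)$-invariant flat, hence by rigidity of Euclidean isometries fixes all of $\EE$. Then $U$ fixes every hyperplane, hence every half-space (each half-space contains a hyperplane), hence every vertex, so $U = \{\mathrm{id}\}$.
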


\begin{proof}
Let $\EE\subset X$ be a flat invariant under $\Aut(X)$. By definition $Aut(X)$ acts cocompactly on a Euclidean complex $X$, and it follows that for some $R>0$, the $R$-neighborhood of $\EE$ is $X$. Since $X$ is essential, this tells us that all the hyperplanes cross $\EE$. 

Let $W$ be some bounded open subset of $X$ intersecting $\EE$ nontrivially.  Since $\Aut(X)$ acts cellularly on $X$, there exists some neighborhood $U$ of the identity in $\Aut(X)$ which fixes $W$(pointwise).  It follows that $U$ fixes $\EE$. It follows that $U$ acts trivially on the collection of hyperplanes of $X$. Since both halfspaces defined by a hyperplane contain hyperplanes, it follows that $U$ acts trivially on the halfspaces of $X$. Consequently, $U$ acts trivially on $\cU(X)$ and in particular on the vertices of $X$. We thus have shown that $U$ contains only the identity. 
\end{proof}

A theorem of Caprace and Monod \cite{CM} tells us that an irreducible lattice in a product of two infinite, proper \cat spaces projects to an indiscrete action on each of its factors. We then obtain as a corollary the following.

\begin{cor}
Let $X=X_P\times X_E$ be a \cat cube complex and $G$ a group acting properly and cocompactly on $X$. Then there exists a finite index subgroup $H<G$ such that $H=H_P\times H_E$, where $H_P$ acts properly and cocompactly on $X_P$ and $H_E$ acts properly and cocompactly on $X_E$. 
\end{cor}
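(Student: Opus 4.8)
The plan is to combine the three ingredients assembled above: the canonical, finite-index-stable product decomposition of $X$, the Lemma that $\Aut(X_E)$ is discrete, and the Caprace--Monod splitting theorem for irreducible lattices in products.

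First I would reduce to a group that preserves the given splitting. By the canonical property of the Product Decomposition Theorem, the decomposition of $X$ into irreducible factors is preserved by a finite-index subgroup of $\Aut(X)$; grouping the non-Euclidean irreducible factors into $X_P$ and the Euclidean ones into $X_E$, there is a finite-index subgroup of $\Aut(X)$ preserving $X=X_P\times X_E$, and intersecting it with $G$ yields a finite-index subgroup $G_0<G$ that preserves this splitting. The coordinate projections give a homomorphism $G_0\to\Aut(X_P)\times\Aut(X_E)$; since $G_0$ has finite index in $G$ it still acts properly and cocompactly on $X=X_P\times X_E$, so its image is a uniform lattice there, and (assuming, as we may, that $G$ acts faithfully on $X$) the map is injective. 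If either $X_P$ or $X_E$ is a single point the statement is trivial, so assume both are unbounded, hence infinite proper \cat spaces.

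Next I would argue that $G_0$ is a \emph{reducible} lattice. Suppose not; then by the theorem of Caprace and Monod its projection to $\Aut(X_E)$ acts indiscretely on $X_E$. But by the Lemma above $\Aut(X_E)$ is discrete, so every subgroup of it is discrete, a contradiction. Hence $G_0$ is reducible as a uniform lattice in $\Aut(X_P)\times\Aut(X_E)$, which means it admits a finite-index subgroup $H$ that factors as a direct product $H=H_P\times H_E$, where $H_P$ and $H_E$ are the images of $H$ under the two coordinate projections and are uniform lattices in $\Aut(X_P)$ and $\Aut(X_E)$ respectively; note that $H$ has finite index in $G$.

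Finally I would verify the asserted properties of $H_P$ and $H_E$. Since $H$ has finite index in $G$, it still acts properly and cocompactly on $X=X_P\times X_E$, and its action is the product of the $H_P$-action on $X_P$ and the $H_E$-action on $X_E$. Choosing a compact $C\subseteq X_P\times X_E$ with $X=H\cdot C$ and replacing $C$ by the compact box (image of $C$ under the two projections), the $X_P$-coordinate gives $X_P=H_P\cdot C_P$, so $H_P$ acts cocompactly on $X_P$, and symmetrically for $H_E$. For properness, fix nonempty compact $K_P\subseteq X_P$, $K_E\subseteq X_E$; the set of $(h_P,h_E)\in H$ with $(h_PK_P\times h_EK_E)\cap(K_P\times K_E)\neq\emptyset$ is finite, and it contains both $\{h_P:h_PK_P\cap K_P\neq\emptyset\}\times\{e\}$ and $\{e\}\times\{h_E:h_EK_E\cap K_E\neq\emptyset\}$, so each of these is finite; as $K_P,K_E$ were arbitrary, $H_P$ acts properly on $X_P$ and $H_E$ on $X_E$. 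The main obstacle is the correct invocation of the Caprace--Monod theorem together with the unwinding of reducibility: one must confirm that $X_P$ and $X_E$ satisfy the hypotheses under which that theorem applies (infinite, proper \cat spaces, which is exactly the unbounded essential situation we are in), and then pass from the abstract conclusion ``$G_0$ is reducible'' to an explicit finite-index subgroup that literally splits as $H_P\times H_E$ with lattice projections — routine, but the place where care is needed.
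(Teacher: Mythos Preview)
Your proposal is correct and follows exactly the approach the paper takes: the corollary is deduced from the Lemma that $\Aut(X_E)$ is discrete together with the Caprace--Monod theorem that an irreducible lattice in a product of two infinite proper \cat spaces projects indiscretely to each factor. The paper states this deduction in a single sentence without filling in the details; your write-up simply makes explicit the passage to a finite-index decomposition-preserving subgroup, the unwinding of ``reducible'' into an actual finite-index product subgroup, and the verification that the factors $H_P,H_E$ act properly and cocompactly.
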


\subsection{Flipping, skewering and facing hyperplanes}

We will be using some notions and  results from \cite{CS} which we record here for convenience. 
An automorphism $g\in Aut(X)$ of a \cat cube complex is said to \emph{skewer} a hyperplane $\hh$ if for some $n>0$ and a halfspace $\h$ bounded by $\hh$ we have that $g^n\h\subsetneq \h$. We say that $g$ \emph{flips} a half-space  space $\h$ if $g\h^*\subset\h$. A halfspace for which there exists no $g\in Aut(X)$ which flips it is said to be \emph{unflippable.} We then have the following results. 

\begin{lemma}[Single Skewering]
Let  that $X$ is a finite dimensional \cat cube complex and let  $G$ a group acting properly and cocompactly on $X$. Then for every essential hyperplane $\hh$ in $X$, there exists $g\in G$ such that $g$ skewers $\hh$. 
\end{lemma}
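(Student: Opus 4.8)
The plan is to produce, for a given essential hyperplane $\hh$, a group element that acts on one of its halfspaces as a strict nesting, and the natural source of such an element is a limit argument using cocompactness together with the fact that both halfspaces bounded by $\hh$ are deep. First I would fix a halfspace $\h$ bounded by $\hh$, fix the basepoint $\bp$, and observe that essentiality gives vertices $v_n \in \h$ and $w_n \in \h^*$ with $d(v_n,\hh)\to\infty$ and $d(w_n,\hh)\to\infty$; concretely one can take $v_n$ deep inside $\h$ realizing large balls, and similarly $w_n$. By cocompactness there is a compact fundamental domain $K$, so choose $g_n \in G$ with $g_n \bp$ within bounded distance of a point "centered" deep in $\h$ — more precisely, choose $g_n$ so that $g_n^{-1}$ carries a point deep in $\h$ back near $\bp$, equivalently $g_n$ carries the bounded neighborhood of $\bp$ to a region lying deep inside $\h$ and far from $\hh$. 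Passing to a subsequence using local finiteness (the stabilizer of any ball is finite, and only finitely many hyperplanes lie within a given distance of $\bp$), one arranges that $g_n$ stabilizes the "coarse direction" picked out by $\h$.

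The key step is then to extract from such a sequence a single element $g$ with $g\h \subsetneq \h$, or to show directly that some $g_n$ already works once $n$ is large. Here is the mechanism I would use: if $g_n \h \subseteq \h$ but equality could hold, we would not yet have skewering, so instead I would argue that for $n$ large, the hyperplane $g_n\hh$ is separated from $\bp$ by $\hh$ itself and lies deep in $\h$ — this forces $g_n\h \subsetneq \h$ as a proper inclusion, because $g_n\hh \ne \hh$ (it is at positive distance from $\hh$) and $g_n\hh \subset \h$, and consistency of the halfspace order in a cube complex then gives $g_n\h \subset \h$ strictly. To get $g_n\hh$ deep inside $\h$: since $\h$ is deep it contains arbitrarily large balls; using cocompactness, translates of a fixed ball around $\bp$ cover $X$, so for a suitably large ball $B$ deep in $\h$ centered far from $\hh$, there is $g_n$ with $g_n\bp \in B$ and moreover $g_n$ maps a fixed bounded neighborhood $N$ of $\bp$ (large enough to contain $\bp$ and a point across $\hh$ in $\h^*$, witnessing that $\hh$ separates $\bp$ from $\h^*$) entirely into $\h$ and far from $\hh$; then $g_n\hh \subset \h$. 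One must also ensure $g_n\h$, not $g_n\h^*$, is the one contained in $\h$: this is where the point of $\h^*$ in $N$ is used, since $g_n$ of that point lies in $\h$, so $g_n\h^* \cap \h \ne \emptyset$, hence $g_n\h^*$ is not contained in $\h^*$; combined with $g_n\hh \subset \h$ this pins down $g_n\h^* \supsetneq \h^*$, equivalently $g_n\h \subsetneq \h$. Taking $g = g_n$ for such $n$ finishes the proof with exponent $n=1$ in the definition (or one iterates if only a power is obtained).

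The main obstacle I expect is the bookkeeping that separates $g_n\h$ from $g_n\h^*$ — i.e. ruling out the "orientation-reversing" possibility $g_n\h^* \subsetneq \h$, which would still be consistent with $g_n\hh$ being deep in $\h$. This is genuinely the content: it is handled by choosing the bounded neighborhood $N$ of $\bp$ to straddle $\hh$ (so $N$ meets both $\h$ and $\h^*$) and large enough that $g_n N \subset \h$ still straddles $g_n\hh$; then the part of $g_nN$ coming from $N\cap\h^*$ witnesses $g_n\h^* \cap \h \ne \emptyset$. A secondary technical point is justifying the existence of such $g_n$ at all, which is exactly cocompactness: the $G$-translates of $N$ cover $X$, so some translate is centered at a prescribed deep point of $\h$. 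I would also remark that finite-dimensionality enters only through the earlier structural facts (halfspaces form a pocset with the finite interval condition, so the nesting $g_n\h \subset \h$ is meaningful and the "deep" points exist by essentiality), so no separate appeal to it is needed beyond invoking the standing hypotheses.
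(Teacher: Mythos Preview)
Your overall strategy is the right one --- use cocompactness to push a translate of $\hh$ deep into $\h$ --- and indeed this is essentially how the lemma is proved in \cite{CS} (the present paper only quotes the statement and gives no proof). However, there is a genuine gap in your orientation argument.

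You correctly arrange $g_n\hh \subset \h$. At that point exactly two configurations are possible:
\[
\text{(a)}\quad g_n\h \subsetneq \h \quad\text{(skewering)},
\qquad\text{or}\qquad
\text{(b)}\quad g_n\h^* \subsetneq \h \quad\text{(flipping).}
\]
Your test for distinguishing them is the observation that a point of $N\cap\h^*$ is carried into $\h$, whence $g_n\h^* \cap \h \neq \emptyset$ and so $g_n\h^* \not\subset \h^*$. But this conclusion holds in \emph{both} cases: in (a) we have $g_n\h^* \supsetneq \h^*$, and in (b) we have $g_n\h^* \subset \h$, which is disjoint from $\h^*$. In neither case is $g_n\h^*$ contained in $\h^*$, so your criterion does not rule out (b). The inference ``$g_n\hh\subset\h$ and $g_n\h^*\not\subset\h^*$ together pin down $g_n\h^*\supsetneq\h^*$'' is simply false.

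The standard repair --- and this is what \cite{CS} does --- is to use that $\hh$ is essential on \emph{both} sides. Run your argument once more with $\h^*$ in place of $\h$ to obtain $g'\in G$ with $g'\hh$ deep in $\h^*$; this yields either $g'\h^*\subsetneq\h^*$ (skewering, done) or $g'\h\subsetneq\h^*$. If neither $g_n$ nor $g'$ skewers directly, then $g_n\h^*\subsetneq\h$ and $g'\h\subsetneq\h^*$, and composing gives
\[
(g_n g')\h \;\subset\; g_n\h^* \;\subsetneq\; \h,
\]
so $g_n g'$ skewers $\hh$. With this extra paragraph your proof is complete.
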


\begin{theorem}[Flipping Lemma]\label{thm:Flipping}
Let $X$ be an unbounded \cat cube complex and let  $G$ be a group acting properly and cocompactly on $X$. Let also  $\h$ be a half-space which is unflippable by the  action of $G$. 

Then $X$ has a decomposition $X=X_1\times X_2$ into a product of subcomplexes, corresponding to a transverse hyperplane decomposition $\cHH(X)=\cHH_1 \cup \cHH_2$, which satisfies the following properties.
\begin{enumerate}
\item $X_1$ is irreducible and all of its hyperplanes are compact. 
\item Some finite index subgroup $G' \leq G$ preserves the decomposition $X=X_1\times X_2$.
\item The $G'$-orbit of $\hh$ is in $\cHH_1$. 
\item $X_1$ is $\RR$-like, namely quasi-isometric to the real line.
\end{enumerate} 
\end{theorem}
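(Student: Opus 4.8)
The plan is to build the product decomposition directly from the geometry of an unflippable halfspace $\h$, using the Single Skewering Lemma and the Flipping Lemma's own hypothesis to control the hyperplanes that interact with $\hh$. First I would set $\cHH_1$ to be the set of hyperplanes $\hat\k$ such that $\k$ is comparable (in the pocset $\cH$) to $\h$ or to $\h^*$ — i.e. the hyperplanes ``parallel'' to $\hh$ in the sense of nesting — together with $\hh$ itself, and let $\cHH_2$ be everything else, namely the hyperplanes transverse to $\hh$. The key first claim is that this is a \emph{transverse} decomposition: every hyperplane in $\cHH_1$ crosses every hyperplane in $\cHH_2$. This is where unflippability enters. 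If $\hat\k\in\cHH_1$ (say $\k\subsetneq\h$, after possibly swapping sides) and $\hat{\ell}\in\cHH_2$ is transverse to $\hh$ but \emph{not} transverse to $\hat\k$, one should be able to combine the nesting relations to flip $\h$: since $G$ acts cocompactly, there are only finitely many $G$-orbits of configurations, and Single Skewering gives $g$ with $g^n\k\subsetneq\k$; iterating and using the transversality of $\hat{\ell}$ with $\hh$ one pushes a translate of $\h^*$ inside $\h$, contradicting unflippability. Making this precise is the main obstacle — one must check that the nesting of $\k$ inside $\h$ really does propagate to a flip, which I expect requires the ``tightness'' of the chain of halfspaces between $\k$ and $\h$ (finite interval condition) plus a skewering element to provide the needed translation distance.

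Next I would verify that $\cHH_1 = \cHH_1\cup\cHH_2$ is genuinely a product decomposition in the sense recalled in the Products subsection: $\cHH$ is the disjoint union $\cHH_1\sqcup\cHH_2$ and every member of one part crosses every member of the other. Disjointness and covering are immediate from the definition (transverse vs. comparable is a dichotomy once we've put $\hh$ into $\cHH_1$), and the crossing property is exactly the claim established in the previous paragraph. This yields $X = X_1\times X_2$ with $\cHH(X_i)=\cHH_i$, and item (3), that (a finite-index subgroup's orbit of) $\hh$ lies in $\cHH_1$, holds by construction — I'd pass to the finite-index subgroup $G'$ from the Product Decomposition Theorem that preserves the canonical decomposition, which gives item (2) simultaneously, after checking our $\cHH_1/\cHH_2$ splitting is a coarsening of (or compatible with) the canonical one; alternatively $\cHH_1$ is $G$-invariant up to finite index directly because unflippability of $\h$ is a $G$-invariant property of the pair $\{\h,\h^*\}$ and conjugation permutes such pairs within a finite orbit.

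For item (1), that $X_1$ is irreducible with all hyperplanes compact: every hyperplane in $\cHH_1$ is, by definition, comparable to $\h$ or $\h^*$, hence any two of them are nested (if $\hat\k,\hat{\ell}\in\cHH_1$ were transverse, one checks using unflippability again that $\h$ would be flippable), so $X_1$ is a ``totally nested'' cube complex, which forces it to be one-dimensional and in fact irreducible. Compactness of the hyperplanes of $X_1$ then follows because a non-compact (deep-on-both-sides) hyperplane in $X_1$ would, via Single Skewering inside $X_1$, produce an element flipping $\h$ — here unflippability is used a third time, now to rule out essential hyperplanes in the $X_1$ direction being ``too big''. Finally, item (4): $X_1$ is one-dimensional (a tree) with all hyperplanes compact and, by essentiality considerations inherited from cocompactness, every edge extends, so it is a line-like tree; one then invokes Single Skewering to get an element translating along it, and cocompactness to conclude it is quasi-isometric to $\RR$. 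The recurring theme and the real work is the same point each time — extracting a flip of $\h$ from any deviation from the nested/transverse dichotomy — so I would isolate that as a single preliminary lemma: \emph{if $\h$ is unflippable and $\hat\k$ is any hyperplane transverse to $\hat\k$ but related to $\hh$ in a way inconsistent with the claimed splitting, then $G$ flips $\h$}, and then the four items fall out formally.
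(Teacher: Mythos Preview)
The paper does not actually prove the Flipping Lemma; it is quoted from \cite{CS} (see the remark immediately following the Double Skewering corollary in \S2.7). So there is no proof in the paper to compare against, and what you have written is an independent attempt at the Caprace--Sageev argument.

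Your outline has the right shape but leaves the decisive steps as assertions. Two concrete gaps:

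\textbf{(a) Transversality of the decomposition.} You define $\cHH_1$ as the hyperplanes comparable to $\hh$ and $\cHH_2$ as those transverse to $\hh$, and you need every $\hat\k\in\cHH_1$ to cross every $\hat\ell\in\cHH_2$. You acknowledge this is ``the main obstacle'' and then give only a heuristic (``iterating and using the transversality of $\hat\ell$ with $\hh$ one pushes a translate of $\h^*$ inside $\h$''). But take $\k\subsetneq\h$ and $\hat\ell$ transverse to $\hh$ with $\hat\k\subset\ell$: nothing in your sketch actually manufactures an element $g$ with $g\h^*\subset\h$ from this configuration. The skewering element for $\hh$ moves $\h$ into itself, not $\h^*$ into $\h$, so ``iterating'' it does not produce a flip; you need a genuinely different mechanism here, and this is exactly where the work in \cite{CS} lies.

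\textbf{(b) Invariance and the definition of $\cHH_1$.} Your $\cHH_1$ is defined relative to the single hyperplane $\hh$, not its orbit. For item (3) you need the $G'$-orbit of $\hh$ to land in $\cHH_1$, i.e.\ you need every $g\hh$ to be comparable to $\hh$. If some $g\hh$ were transverse to $\hh$ your partition would put it in $\cHH_2$, contradicting (3). You must therefore first prove that unflippability of $\h$ forces the $G$-orbit of $\hh$ to consist of pairwise non-crossing hyperplanes; this is a separate lemma, not a formality, and once you have it the natural candidate for $\cHH_1$ is the $G$-orbit of $\hh$ (or its parallel class) rather than ``hyperplanes comparable to $\hh$''. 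Similarly, your claim that any two members of $\cHH_1$ are nested (``one checks using unflippability again'') is asserted without argument; as stated, two halfspaces $\k_1,\k_2\subsetneq\h$ with $\hat\k_1$ transverse to $\hat\k_2$ do not obviously yield a flip of $\h$.

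In short: the architecture is reasonable, but each appeal to ``unflippability forces a flip, contradiction'' is doing real work that you have not supplied, and at least one of those appeals (in (a)) does not go through in the way you suggest.
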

 
 \begin{corollary}[Double Skewering]
 Let $X$ be an essential \cat cube complex and $G$ a group acting properly and cocompactly on $X$. Then for every pair of disjoint hyperplanes, there exists a group element skewering both. 
 \end{corollary}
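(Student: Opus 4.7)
My plan is to orient the two halfspaces compatibly, produce two flipping elements using the Flipping Lemma (Theorem~\ref{thm:Flipping}), and compose them to obtain a single element that moves each chosen halfspace strictly inside itself. Since $\hat h_1$ and $\hat h_2$ are disjoint, after relabelling their halfspaces I may arrange that $h_1 \subsetneq h_2$, equivalently $h_2^* \subsetneq h_1^*$.

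I would then apply the Flipping Lemma to obtain $a \in G$ with $a h_1^* \subsetneq h_1$ and $b \in G$ with $b h_2 \subsetneq h_2^*$, and set $g := ab$. The verification that $g$ skewers both hyperplanes amounts to a short chain of inclusions exploiting the nested pair: pushing $h_2$ into $h_2^*$ by $b$ and then into $h_1$ by $a$ (using $h_2^* \subsetneq h_1^*$) gives
\[
g h_2 \;\subseteq\; a h_2^* \;\subseteq\; a h_1^* \;\subsetneq\; h_1 \;\subsetneq\; h_2,
\]
and the same chain applied to $h_1 \subsetneq h_2$ yields $g h_1 \subseteq g h_2 \subsetneq h_1$, so $g$ indeed skewers both.

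The main obstacle is guaranteeing the existence of the flipping elements $a$ and $b$, since the Flipping Lemma really provides a dichotomy: a halfspace is either flippable, or it forces an $\RR$-like product factor through its hyperplane. If, say, $h_1$ is unflippable, Theorem~\ref{thm:Flipping} produces a splitting $X = X_1 \times X_2$ with $X_1$ an irreducible $\RR$-like factor containing the orbit of $\hat h_1$, preserved by a finite-index subgroup $G' \leq G$, and with transverse hyperplane decomposition $\cHH = \cHH_1 \cup \cHH_2$. Disjointness of $\hat h_2$ from $\hat h_1$ forces $\hat h_2 \in \cHH_1$ as well, since any $\cHH_2$-hyperplane would cross $\hat h_1$; so both hyperplanes lie in the same line-like factor. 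In that degenerate case I would replace the composition argument by a direct one: by the Single Skewering Lemma and passage to a sufficiently high power in $G'$, there is an element $s \in G'$ whose projection to $\Aut(X_1)$ is a translation in the direction determined by the nested pair, and such a translation simultaneously skewers every coherently oriented hyperplane of $X_1$, so in particular both $\hat h_1$ and $\hat h_2$. The symmetric case with $h_2^*$ unflippable is handled identically.
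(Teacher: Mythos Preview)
The paper does not give its own proof of Double Skewering; the corollary is simply quoted from \cite{CS}, with the remark immediately following it indicating that the result there is stated more generally. So there is no in-paper argument to compare your proposal against.

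Your approach is the standard one and the main case is correct: with $\h_1\subsetneq\h_2$, a flip $a$ of $\h_1$ and a flip $b$ of $\h_2^*$ compose to give $g=ab$ satisfying $g\h_2\subsetneq a\h_2^*\subsetneq a\h_1^*\subsetneq \h_1\subsetneq \h_2$, hence also $g\h_1\subsetneq g\h_2\subsetneq\h_1$. This is exactly how the argument runs in \cite{CS}.

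The only place that deserves a little more care is the degenerate branch. Your reduction is right: if $\h_1$ (or $\h_2^*$) is unflippable, the Flipping Lemma yields $X=X_1\times X_2$ with $X_1$ $\RR$-like and $\hh_1\in\cHH_1$, and disjointness forces $\hh_2\in\cHH_1$ as well since every hyperplane in $\cHH_2$ crosses every hyperplane in $\cHH_1$. What is a bit quick is the assertion that ``such a translation simultaneously skewers every coherently oriented hyperplane of $X_1$''. You should say explicitly why: take $s\in G$ from Single Skewering with $s^n\h_1\subsetneq\h_1$, replace $s$ by a power so that $s\in G'$ and hence $s=(s_1,s_2)$ respects the product; then in the $\RR$-like factor $X_1$ the iterates $s_1^{k}\hh_1$ are pairwise disjoint with $d(\hh_1,s_1^{k}\hh_1)\to\infty$, while $\hh_2$ is compact. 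Consequently for some $m$ the hyperplane $s_1^{-m}\hh_1$ separates $\hh_2$ from $\hh_1$, i.e.\ $\h_2\subset s_1^{-m}\h_1$, so $s_1^{m}\h_2\subset\h_1\subsetneq\h_2$; together with $s_1^{m}\h_1\subsetneq\h_1$ this gives the desired double skewer $s^m\in G$. With that line added your proof is complete.
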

\noindent{\bf Remark.} The statement in \cite{CS} is somewhat more general than what is stated here, but this is sufficient for our needs. 

The next two results involve the existence of hyperplanes which are lie in a particular configuration with respect to one another. 

\begin{proposition}[Corner Lemma]
\label{CornerHyperplane}
Let $X$ be an essential, non-Euclidean, irreducible cocompact \cat cube complex. Let 
$\hh_1$ and $\hh_2$ be two intersecting hyperplanes of $X$. Then there exists a pair of hyperplanes which lie in diagonally opposite components of $X\setminus \{\hh_1,\hh_2\}$. (That is, the hyperplanes are separated both by $\hh_1$ and $\hh_2$). 
\end{proposition}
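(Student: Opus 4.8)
The plan is to start from the two intersecting hyperplanes $\hh_1, \hh_2$, which by the hypothesis that they cross divide $X$ into four nonempty quadrants, and to use the essentiality, cocompactness, and irreducibility hypotheses to push a hyperplane deep into two diagonally opposite quadrants. First I would fix the four halfspaces: write $\h_1, \h_1^*$ for the sides of $\hh_1$ and $\h_2, \h_2^*$ for the sides of $\hh_2$, so that the quadrants are $\h_1\cap\h_2$, $\h_1\cap\h_2^*$, $\h_1^*\cap\h_2$, $\h_1^*\cap\h_2^*$. The goal is a hyperplane $\hh$ separated from $\hh_1$ and $\hh_2$ by being contained, together with its carrier, in (say) $\h_1\cap\h_2$, and another contained in $\h_1^*\cap\h_2^*$. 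By the symmetry of the situation it suffices to produce one such hyperplane inside one quadrant.

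The key tool is the Single Skewering Lemma (and its consequence, Double Skewering): since $X$ is essential, for each $\hh_i$ there is $g_i\in G$ skewering it, i.e. $g_i^{n_i}\h_i \subsetneq \h_i$ for a suitable side and power. The idea is to take an appropriate product or commutator-type combination of skewering elements for $\hh_1$ and $\hh_2$ and apply it to a starting hyperplane to drag it deep into $\h_1 \cap \h_2$. More precisely, I would first use irreducibility and non-Euclideanness: because $X$ is irreducible, the hyperplane $\hh_1$ does not cross every other hyperplane (otherwise $\cHH$ would split as $\{\hh_1\}$-transverse-to-rest), so there exists a hyperplane $\hh_3$ disjoint from $\hh_1$; among the two sides of $\hh_3$, one is contained in $\h_1$ or $\h_1^*$. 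Iterating with $\hh_2$, and using the Double Skewering Corollary to find group elements moving these disjoint hyperplanes strictly inward, I would produce a hyperplane whose carrier is pushed into a single quadrant. To land in the quadrant $\h_1\cap\h_2$ specifically (rather than some uncontrolled one), I would compose with powers of $g_1, g_2$ chosen with the correct orientation, using that $g_1$ preserves $\h_1$-ness while eventually also carrying $\hh_2$-configuration data inward, and that high powers of $g_1$ and $g_2$ can be interleaved so the image halfspace sits strictly inside $\h_1$ and strictly inside $\h_2$ simultaneously.

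The main obstacle I expect is exactly this simultaneity: a single skewering element for $\hh_1$ need not interact well with $\hh_2$, and naive composition $g_2^m g_1^n$ applied to a test hyperplane may be pushed deep into $\h_1$ but across $\hh_2$ in the wrong direction, or may fail to be deep in $\h_2$ at all. Handling this is where irreducibility and non-Euclideanness must really be used — presumably via the fact (from \cite{CS}, available through the Flipping Lemma and its surrounding machinery) that in an essential irreducible non-Euclidean complex one has a rich supply of hyperplanes in "general position," in particular facing triples, allowing one to first find a hyperplane $\hh$ with $\hh \subset \h_1 \cap \h_2$ up to finite error, then apply a single group element skewering $\hh$ itself (again Single Skewering) to absorb the finite error and place the whole carrier strictly inside the open quadrant. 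Once one such hyperplane is found in $\h_1\cap\h_2$, applying the same argument to the hyperplanes $\hh_1, \hh_2$ with reversed orientations (or applying a group element that flips the configuration, when available, or simply rerunning the construction in the opposite quadrant) yields a hyperplane in $\h_1^*\cap\h_2^*$, and the two together are the desired diagonally opposite pair.
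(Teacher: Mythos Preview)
The paper does not supply its own proof of this proposition: it is recorded in \S2.7 among several results quoted from \cite{CS} ``for convenience,'' so there is no in-paper argument to compare against.

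As for your proposal on its own terms, it is an honest strategy sketch rather than a proof, and the gap you yourself flag is real and not closed. You reduce to placing a hyperplane in a \emph{prescribed} quadrant, say $\h_1\cap\h_2$, and propose to do so by composing skewering elements $g_1, g_2$ for $\hh_1, \hh_2$. But, as you note, $g_1$ need not respect the $\hh_2$-side at all, and no concrete mechanism is given to force simultaneous depth in both halfspaces; the appeal to ``a rich supply of hyperplanes in general position'' and to facing triples is a hope, not an argument. Two further issues: first, your symmetry reduction is stronger than the statement. The Corner Lemma only asserts that \emph{some} pair of diagonally opposite quadrants each contains a hyperplane, not that every quadrant does; so ``rerun the construction in the opposite quadrant'' presupposes exactly the control over \emph{which} quadrant receives the hyperplane that you have not established. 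Second, invoking the Facing Triple Lemma to prove the Corner Lemma risks circularity: both are developed in \cite{CS} as part of the same package, and you would need to check the actual dependency order there before using one to derive the other.
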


\begin{proposition}[Facing Triple Lemma]
\label{FacingTriple} 
Let $X$ be an essential non-Euclidean, cocompact complex. Then there exists a facing triple of hyperplanes in $X$. That is, there exists a triple of disjoint hyperplanes no one of which separates the other two. 
\end{proposition}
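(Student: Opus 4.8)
The plan is to bootstrap from the Facing \emph{pair} — which we get for free from essential non-Euclidean-ness — to a facing triple, using the combination of the Flipping Lemma and the Corner Lemma to produce a third hyperplane disjoint from the first two. First I would reduce to the irreducible case: by the Product Decomposition Theorem $X=\prod_{i=1}^r X_i$, and since $X$ is non-Euclidean some factor $X_i$ is non-Euclidean; a facing triple in $X_i$ (embedded via $\cHH_i\subset\cHH$) is a facing triple in $X$, because hyperplanes from a single factor that are pairwise disjoint in $X_i$ remain pairwise disjoint and non-separating in $X$. So assume $X$ is essential, non-Euclidean, irreducible, and cocompact.

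Next I would produce the initial facing \emph{pair}. Since $X$ is essential and unbounded, no halfspace can satisfy DCC in both directions, so halfspaces are deep on both sides. If \emph{every} halfspace were flippable, one could iterate flips to push a halfspace strictly inside its complement and then inside itself, which together with Single/Double Skewering and essentiality forces the complex to behave like a line or a product — contradicting irreducibility and non-Euclideanness. Thus there is an unflippable halfspace, and the Flipping Lemma then gives a product decomposition $X=X_1\times X_2$ with $X_1$ irreducible with compact hyperplanes and $\RR$-like; but $X$ is irreducible, so the only way out is $X_2$ trivial and $X$ itself $\RR$-like, hence Euclidean — contradiction. Hence there is \emph{no} unflippable halfspace, i.e. every halfspace is flippable. (Alternatively, and more robustly: I would argue directly that in an essential non-Euclidean irreducible complex there exist two disjoint hyperplanes $\hh_1,\hh_2$ bounding halfspaces $\h_1,\h_2$ with $\h_1^*\supset\h_2$, i.e. a "facing pair," by applying the Flipping Lemma to get $g\h_1^*\subset\h_1$ for suitable $g$ and noting $g$-translates of $\hh_1$ eventually become disjoint from $\hh_1$; this is the standard route in \cite{CS}.)

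Now, given a facing pair $\hh_1,\hh_2$ with halfspaces $\h_1\supsetneq\h_2^*$ (so $\hh_1,\hh_2$ are disjoint, neither separating the other), I would find a third hyperplane $\hh_3$ disjoint from both and separating neither. The region $R=\h_1^*\cap\h_2^*$ is a deep "corner": it contains arbitrarily large balls because $\h_1^*$ and $\h_2^*$ are both deep and, crucially, since the complex is irreducible, $\hh_1$ and $\hh_2$ are not transverse to all hyperplanes, so there are hyperplanes entirely inside $R$. Concretely, pick a hyperplane $\hh$ crossing deep into $R$; if $\hh$ is already disjoint from $\hh_1$ and $\hh_2$ we are almost done, and if it crosses one of them, apply the Corner Lemma (Proposition \ref{CornerHyperplane}) to $\hh$ and $\hh_1$ (resp. $\hh_2$) to extract a hyperplane pushed into the far corner, away from $\hh_1$ and $\hh_2$; iterating finitely many times — using the finite-width/finite-interval conditions to guarantee termination — yields $\hh_3$ with $\hh_3\subset R$ and $\hh_3$ disjoint from $\hh_1$ and $\hh_2$. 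Finally I would check the non-separation condition: $\hh_1$ and $\hh_2$ lie in the same component of $X\setminus\hh_3$ (namely the side of $\hh_3$ containing $\h_1\cap\h_2$-adjacent regions, since $\hh_3$ is deep inside the corner $R$ and $R$ is connected to a halfspace of $\hh_3$ disjoint from both $\hh_i$), and symmetrically $\hh_3$ does not separate $\hh_1$ from $\hh_2$ since $\hh_3\subset\h_1^*\cap\h_2^*$ is confined to one component of $X\setminus\hh_i$ for each $i$. Hence $\{\hh_1,\hh_2,\hh_3\}$ is a facing triple.

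The main obstacle I anticipate is the termination/finiteness argument in producing $\hh_3$: one must be sure that repeatedly invoking the Corner Lemma to "move a hyperplane deeper into the corner" actually stabilizes at a hyperplane disjoint from \emph{both} $\hh_1$ and $\hh_2$ simultaneously, rather than trading a crossing with $\hh_1$ for a crossing with $\hh_2$ indefinitely. I expect this is handled by a monotone quantity — e.g. the number of hyperplanes separating the candidate from a fixed deep basepoint in $R$, or the distance into the corner — that strictly increases under the Corner-Lemma step while the set of hyperplanes crossing $\hh_1$ or $\hh_2$ and lying within a bounded region is finite; but making this bookkeeping precise (and ruling out the Euclidean/reducible degeneracies at each step) is where the real care is needed.
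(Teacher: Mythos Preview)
The paper does not actually prove this proposition: it is recorded in \S 2.7 as one of several results imported from \cite{CS}, so there is no in-paper proof to compare your proposal against.

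That said, your argument has a concrete geometric error. With your convention $\h_1 \supsetneq \h_2^*$ for the facing pair, taking complements gives $\h_1^* \subsetneq \h_2$, and hence your ``corner'' $R = \h_1^* \cap \h_2^*$ is \emph{empty}. More to the point, for a facing triple the third hyperplane $\hh_3$ must lie in the \emph{middle} strip $\h_1 \cap \h_2$ (so that neither $\hh_1$ nor $\hh_2$ separates it from the other), and one of its halfspaces $\h_3^*$ must be entirely contained in that strip (so that $\hh_3$ does not separate $\hh_1$ from $\hh_2$). Your plan to push a candidate hyperplane ``deeper into the corner'' by repeated use of the Corner Lemma cannot work here: Proposition~\ref{CornerHyperplane} concerns two \emph{intersecting} hyperplanes and their four quadrants, whereas $\hh_1$ and $\hh_2$ are disjoint by construction, so there are no corners to speak of. The logic in your facing-pair paragraph is also tangled---you argue first that an unflippable halfspace must exist and then that none can---though your parenthetical ``alternatively'' is the right instinct: in the irreducible non-Euclidean essential cocompact case, the contrapositive of the Flipping Lemma says every halfspace is flippable, and a single flip already yields a disjoint pair. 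What is genuinely missing from your outline is a mechanism that produces a halfspace \emph{inside} the strip between two disjoint hyperplanes; the Corner Lemma does not supply this, and that is where the actual work in \cite{CS} lies.
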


\section{The definition of the boundary B(X)}

\subsection{Non terminating ultrafilters}

For the rest of this paper we shall usually have as a standing assumption that $X$ is an unbounded, locally finite, finite dimensional \cat cube complex. The focus of this paper will be such complexes whose automorphism group acts cocompactly. A complex which has a cocompact automorphism group will be called a \emph{cocompact complex}. By Corollary \ref{DeRahmDecomposition}, it suffices to study such complexes which are essential. 

Moreover, in light of the fact the Poisson boundary of an abelian group is a point, the Euclidean factor will play no role in the construction of the boundary. 
Thus given an arbitrary group $G$ acting properly and cocompactly on an essential complex $X=X_P\times X_E$, we will consider a finite index subgroup $H$ of $G$ which preserves the decomposition into the strictly non-Euclidean and Euclidean factors, such that $H=H_P\times H_E$, and focus on the action of $H_P$ on $X_P$. The boundary constructed will be the boundary of $G$ as well. This follows from the fact that $H_E$ has an Abelian subgroup of finite index which is central in $H$, and the center of any group acts trivially on its Poisson boundaries.

Thus, we will now let $X$ be a cocompact, essential, strictly non-Euclidean \cat cube complex. 
We shall be interested in a particular subset of the Roller boundary $\ux$.  An ultrafilter which has the property that no descending collection of halfspaces terminates is called {\it nonterminating}. Let $\nonterm(X)$ denote the collection of nonterminating ultrafilters, namely : 

$$\nonterm(X)=\{\alpha\in\ux\vert \h\in\alpha\ \Rightarrow\exists\ \h^\prime\in\alpha\ {\rm with }\ \h^\prime\subsetneq\h\}$$
We let $B(X)$ denote the closure of $\nonterm(X)$ in the Tychonoff topology on $\ux$ defined above. 

We note that if $X$ decomposes as a product $X=\prod_i X_i$, then $\nonterm(X)=\prod_i\nonterm(X_i)$. Consequently, we obtain that  $B(X)=\prod_i B(X_i)$. 


Our aim will be to show that the action on the boundary $B(X)$  of $X$ enjoys the dynamical properties we are interested in. 
The very first thing we need is that the boundary is not empty.

\begin{theorem}\label{non-empty}{\bf Essential complexes have non-empty boundary.}
Let $X$ be an essential, cocompact \cat cube complex. Then  $\nonterm(X)\not=\emptyset$. 
\end{theorem}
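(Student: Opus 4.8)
The plan is to construct a single nonterminating ultrafilter by an iterative procedure, at each stage choosing a halfspace strictly deeper than the one chosen before. Start with any essential hyperplane $\hh_0$ of $X$ and let $\h_0$ be one of its halfspaces; since $X$ is essential, $\h_0$ is deep, i.e. contains arbitrarily large balls. By the Single Skewering Lemma there is $g \in G$ skewering $\hh_0$, so $g^n \h_0 \subsetneq \h_0$ for some $n > 0$; replacing $g$ by $g^n$ we get $g\h_0 \subsetneq \h_0$. This already produces an infinite strictly descending chain $\h_0 \supsetneq g\h_0 \supsetneq g^2\h_0 \supsetneq \cdots$, all of whose terms are deep (being $G$-translates of a deep halfspace). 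The intersection of this chain, together with all halfspaces containing some $g^k\h_0$, is a consistent collection; the point is to enlarge it to a full ultrafilter while keeping the nonterminating property.

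The key step is the following extension lemma, which I would prove by an exhaustion/Zorn-type argument. Let $\cD$ be the collection of deep halfspaces that appear in the descending chain $\{g^k \h_0\}$ or contain one of its terms. I claim one can make a consistent choice on all of $\cH$ extending $\cD$ so that the resulting ultrafilter $\alpha$ is nonterminating. Enumerate the hyperplanes $\hh_1, \hh_2, \ldots$ (using local finiteness and countability of $\cHH$). Process them one at a time: when we reach $\hh_i$, if a choice between $\h_i$ and $\h_i^*$ is already forced by consistency with previous choices, keep it; otherwise both are still available, and I must argue that at least one of the two choices can be made \emph{and} still allows the nonterminating condition to be maintained. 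The nonterminating condition is a condition "at infinity": $\alpha$ fails to be nonterminating exactly when it contains a minimal halfspace. So it suffices to ensure that no halfspace we ever commit to is minimal in $\alpha$. A halfspace $\h$ is deep precisely when it contains arbitrarily large balls; using the Single Skewering Lemma on the essential hyperplane $\hh$ again, any deep halfspace $\h$ strictly contains some translate $g_\h \h$, which is again deep. So if we only ever commit to deep halfspaces, every committed halfspace automatically has a deep halfspace strictly below it. The remaining worry is consistency: choosing $\h_i$ deep might force, via consistency, some halfspace $\h' \supset \h_i$ that we had not chosen — but consistency forces larger halfspaces, and any halfspace containing a deep halfspace is itself deep, so no contradiction arises. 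When neither $\h_i$ nor $\h_i^*$ is yet forced, at least one of them is deep (indeed in a cocompact essential complex every halfspace is deep), so we pick that one; if both are deep, pick either. This yields a consistent, nonterminating choice on all of $\cH$, hence an element of $\nonterm(X)$.

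I would streamline the bookkeeping by observing up front that in a cocompact essential complex \emph{every} halfspace is deep: if $\h$ contained only bounded balls, then $\hh$ would fail the essentiality condition, contradicting Corollary \ref{DeRahmDecomposition}(\ref{Essential}) (we have already reduced to the essential case). Granting this, the argument collapses to: fix $g$ and $\hh_0$ with $g\h_0 \subsetneq \h_0$ as above; take $\cK = \{\,\h : \h \supset g^k\h_0 \text{ for some } k \geq 0,\text{ or } \h^* \supset g^k \h_0 \text{ for some } k\,\}$, which is involution-invariant; the chain $\{g^k\h_0\}$ determines a principal-or-not ultrafilter on $\cK$; extend it to $\cH$ by the surjectivity in the style of Lemma \ref{Quotient} — except here I want the extension to remain nonterminating, which forces me to not use the DCC version of that lemma but a genuinely "deep" choice everywhere. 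Concretely: run the transfinite/inductive extension, and at each undecided hyperplane choose \emph{either} halfspace (both being deep); the union is consistent by construction, and it is nonterminating because every halfspace in it is deep and every deep halfspace strictly contains another (namely its image under a skewering element), which therefore also lies in the ultrafilter by consistency.

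\textbf{Main obstacle.} The delicate point is \emph{not} producing one infinite descending chain — the Single Skewering Lemma hands that to us immediately — but ensuring that the extension to a full ultrafilter does not accidentally introduce a minimal halfspace. The resolution is the observation that in the essential cocompact setting every halfspace is deep, and that a skewering element exhibits, for each deep halfspace, a strictly smaller deep halfspace; so "deep" is exactly the local certificate that rules out minimality, and it is preserved under the only operation (consistency, i.e. passing to larger halfspaces) that our construction forces on us. Verifying that "every halfspace is deep" genuinely follows from essentiality plus cocompactness, and that consistency never forces a non-deep choice, is where the real content lies; everything else is routine enumeration and Zorn's lemma.
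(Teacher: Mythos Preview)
There is a genuine gap in the extension step. You claim the resulting ultrafilter $\alpha$ is nonterminating because every $\h\in\alpha$ is deep and a skewering element produces $g\h\subsetneq\h$, ``which therefore also lies in the ultrafilter by consistency.'' But consistency runs in the opposite direction: from $g\h\in\alpha$ and $g\h\subset\h$ one concludes $\h\in\alpha$, not the converse. Nothing in your construction forces $g\h$---or any other halfspace strictly inside $\h$---to belong to $\alpha$. Depth of $\h$ tells you only that \emph{some} halfspace lies strictly inside it, not that the particular choices you have made put one there.

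The essential cocompact complex $X=\ZZ^2$ already breaks the procedure ``at each undecided hyperplane choose either halfspace (both being deep).'' Process the vertical hyperplanes first and commit to every right halfspace $\{x>n+\tfrac12\}$; this is your initial skewered descending chain. Now process the horizontal hyperplanes and, at each, choose the halfspace containing the $x$-axis---a perfectly deep choice each time. The resulting ultrafilter is the Roller point ``$(\infty,0)$'', and the halfspace $\{y<\tfrac12\}$ is minimal in it: the only halfspaces of $\ZZ^2$ strictly contained in $\{y<\tfrac12\}$ are $\{y<-\tfrac12\},\{y<-\tfrac32\},\ldots$, none of which were chosen. So your certificate (deep $\Rightarrow$ not minimal) is false, and the obstacle you correctly identified in your final paragraph is not cleared by the argument you give.

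What is actually needed is a mechanism that, each time a new hyperplane is processed, simultaneously produces a strictly smaller halfspace \emph{inside every halfspace already chosen}. The paper achieves this by first splitting off the Euclidean factor (handled separately by choosing a ray in the invariant flat that is transverse to every parallelism class of hyperplanes) and then, in each irreducible non-Euclidean factor, invoking the Corner Lemma: when the next undecided hyperplane $\hh_n$ crosses the current minimal committed halfspace, the Corner Lemma supplies a hyperplane $\hh_m$ sitting in one of the corners, and one commits to the far side $\h_m$---which is strictly inside both $\h_1$ and the chosen side $\h_n$. This is precisely the step your enumeration lacks, and Single Skewering alone cannot substitute for it, since skewering only moves you deeper within a single hyperplane's family of translates and says nothing about hyperplanes that cross it.
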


\begin{remark}
The above theorem is false without the assumption of cocompactness of the automorphism group. See Figure~\ref{NoBoundary}.

\begin{figure}[h]
\label{NoBoundary}
\includegraphics{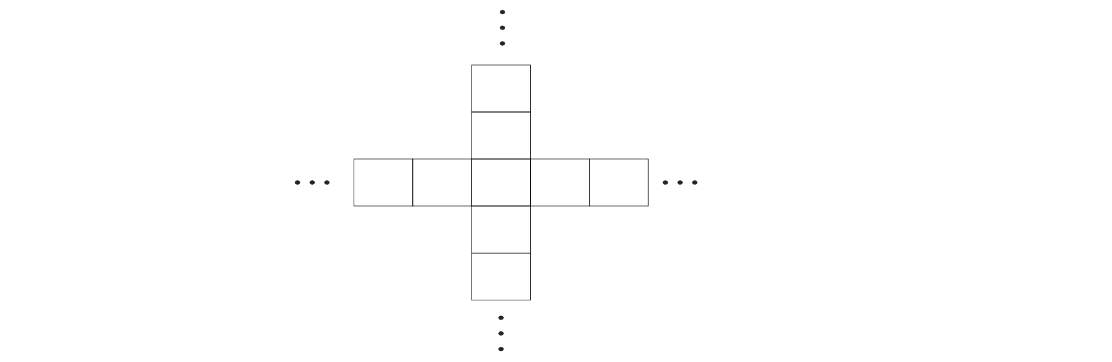}
\caption{An essential complex with no nonterminating ultrafilter.}
\end{figure}
\end{remark}


\begin{proof}

We consider the decomposition of $X=X_P\times X_E$ into factors, where $X_P$ are strictly non-Euclidean and $X_E$ is Euclidean.  Note that by invariance of the decomposition under a finite index subgroup of $Aut(X)$, since $X$ is cocompact so is each of the factors $X_P$ and $X_E$.  Since 
$\nonterm(X)= \nonterm(X_P)\times\nonterm(X_E)$, it suffices to treat separately the cases that $X$ is Euclidean and $X$ is strictly non-Euclidean.

First, let us treat the case that $X$ is Euclidean. The complex $X$ then contains an $Aut(X)$-invariant flat $\EE\subset X$. Since $X$ is cocompact and each hyperplane is essential, it follows that every hyperplane meets $\EE$. The intersection of the hyperplanes of $X$ with $\EE$ gives rise to a collection of lines, which by the finite dimensionality of $X$ fall into finitely many parallelism classes. Since $X$ is essential, each such line has infinitely many parallel lines on either side of it.  Choose a ray $r\subset \EE$ which is not parallel to any of these lines. For each hyperplane, choose the halfspace containing the infinite part of $r$. This is then a nonterminating ultrafilter.

For the strictly non-Euclidean factor $X_P$, we first note that $X_P$ is itself of a product of non-Eucliean irreducible factors $X_P=\prod X_i$. Since then $\nonterm(X_P)=\prod_i\nonterm(X_i)$, it thus suffices to consider the case that is that $X$ is an essential, irreducible, non-Euclidean complex. We then proceed as follows.

We let $\cH=\{\hh_1,\hh_2,\ldots\}$ be some ordering of the hyperplanes of $X$. Recall that an ultrafilter $\alpha$ is a choice of a halfpace $\h$ for each hyperplane $\hh$ satisfying the consistency condition: $\h\in\alpha$ and $\h\subset\k$ then $\k$ in $\alpha$.  We will construct an ultrafilter $\alpha$ by describing the halfspaces $\h_i\in\alpha$ associated to $\hh_i$ in order. 

Choose $\h_1\in\alpha$ arbitrarily as a halfspace bounded by $\hh_1$. For every hyperplane $\hh\subset \h_1^*$, we choose the halfspace $\h$ such that $\h_1\subset\h$ (this is dictated by the consistency condition.)

Now we consider the hyperplane $\hh_n$ such that $n$ is the smallest number for which $\hh_n\not\subset\h_1^*$. That is, the first $n$ for which the choice of the halfspace $\h_n$ has yet to be made.

If $\hh_n\cap\hh_1=\emptyset$, then we choose $\h_n$ so that $\h_n\subset\h_1$. If $\hh_n\cap\hh_1\not=\emptyset$, then by Lemma \ref{CornerHyperplane}, there exists a hyperplane $\hh_m$ such that $\hh_m\subset\h_1$ and $\hh_m\cap\hh_n=\emptyset$. Let $\h_m$ denote the halfspace bounded by $\hh_m$ not containing $\hh_1$ and $\hh_n$ and let $\h_n$ denote the halfpspace bounded $\hh_n$ which contains $\hh_m$. Now for every hyperplane $\hh\subset \h_m^*$, we choose the haflspace $\h$ such that $\h_m\subset\h$. Note that these choices do not change whatever choices were made previously. Note that we have now made choices for $\hh_1$ and $\hh_n$. We have thus arranged that there exists a halfspace  $\h\in\alpha$ such that $\h\subset\h_k$, for $k=1,\ldots,n$. 

We continue in this manner choosing a halfspace for each hyperplane $\hh$. Note that for any two hyperplanes $\hh$ and $\hk$, the decisions for both are made at some finite stage so that the consistency condition for an ultrafilter are satisfied (i.e. if $\h\in\alpha$ and $\h\subset\k$ then $\k\in\alpha$. Also, by construction, for each $\h\in\alpha$, there exists $\k\in\alpha$ such that $\k\subset\h$. Thus,  $\alpha$ is nonterminating, as required. 
\end{proof}

\begin{remark}
Note that the above argument shows  more, namely that there exists an element of $\nonterm(X)$ in each halfspace of $X$. This fact will be used below.
\label{ManyIDU}
\end{remark}

We now show that the nonterminating ultrafilters make up ``most" of $B(X)$. 

\begin{proposition}
\label{DenseGDelta}
Let $X$ be an essential, strictly non-Euclidean, cocompact \cat cube complex. Then $\nonterm(X)$ is a dense $G_\delta$ in $B(X)$.
\end{proposition}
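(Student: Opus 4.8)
The plan is to exhibit $\nonterm(X)$ as a countable intersection of open dense subsets of $B(X)$. For each halfspace $\h \in \cH$, observe that the set $U_\h = \{\alpha \in B(X) : \h \in \alpha\}$ is clopen in $B(X)$ (it is the trace of a $\cU$-halfspace, which is clopen in $\cP$ since it depends on only one coordinate). Now define, for each $\h$,
$$V_\h = (B(X) \setminus U_\h) \cup \bigcup_{\h' \subsetneq \h} U_{\h'},$$
the set of ultrafilters $\alpha$ that either do not contain $\h$ or contain some halfspace strictly below $\h$. Each $V_\h$ is open, being a union of clopen sets. One checks directly from the definition that $\nonterm(X) = \bigcap_{\h \in \cH} V_\h$: an ultrafilter $\alpha$ is nonterminating precisely when, for every $\h \in \alpha$, there is $\h' \in \alpha$ with $\h' \subsetneq \h$, which is exactly the condition "$\alpha \notin U_\h$ or $\alpha$ meets some $U_{\h'}$ with $\h' \subsetneq \h$" holding for all $\h$. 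Since $\cH$ is countable, this realizes $\nonterm(X)$ as a $G_\delta$ in $B(X)$.

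The remaining — and main — point is to show each $V_\h$ is dense in $B(X)$. Since $\nonterm(X)$ is dense in $B(X)$ by definition (it is its closure), and $\nonterm(X) \subseteq V_\h$, density of each $V_\h$ is immediate. Thus the genuine content is entirely in the $G_\delta$ computation, and density comes for free once we know $V_\h \supseteq \nonterm(X)$. So the proof structure is: (1) verify $U_\h$ is clopen in $B(X)$; (2) define $V_\h$ and show it is open; (3) verify the set-theoretic identity $\nonterm(X) = \bigcap_\h V_\h$; (4) conclude $\nonterm(X)$ is $G_\delta$, and dense because it contains no proper closed superset of itself in $B(X)$ other than $B(X)$ — more precisely, $\overline{\nonterm(X)} = B(X)$ by definition.

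The step I expect to require the most care is the set-theoretic identity in (3), specifically the direction showing that an $\alpha \in \bigcap_\h V_\h$ is genuinely nonterminating: given $\h \in \alpha$, since $\alpha \in V_\h$ and $\alpha \in U_\h$, we must have $\alpha \in U_{\h'}$ for some $\h' \subsetneq \h$, i.e.\ $\h' \in \alpha$ with $\h' \subsetneq \h$ — this is exactly nonterminating. The reverse inclusion is equally direct. One subtlety worth checking is that $\h' \subsetneq \h$ should be interpreted as strict containment of halfspaces (equivalently $\h'$ and $\h$ bound distinct hyperplanes with $\h' \subset \h$), matching the definition of $\nonterm(X)$ in the text; with that convention the identity is a tautology. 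A secondary point is confirming $U_\h \cap B(X)$ is clopen: it is closed since $B(X)$ is closed in $\cP$ and $U_\h$ is closed in $\cP$, and open since the complement $U_{\h^*}$ is likewise closed; so no completeness or Baire-category argument is actually needed here, the $G_\delta$-ness being structural rather than topological.
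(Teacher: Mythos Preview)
Your proof is correct and is essentially the same as the paper's: your open sets $V_\h$ are exactly the complements in $B(X)$ of the paper's closed sets $B_\h(X)=\{\alpha\in B(X):\h\text{ is minimal in }\alpha\}$, and both arguments obtain density directly from the definition $B(X)=\overline{\nonterm(X)}$. The only difference is cosmetic---you intersect open sets while the paper unions their closed complements.
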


\begin{proof}

For a given half-space $\h$ in $X$ we let

$$B_\h(X)=\{\alpha\in B(X)\vert \h \text{ is minimal in } \alpha\}$$
Note that $$B(X)-\nonterm(X)=\bigcup_{\h} B_\h(X)$$ 
Thus, to show that $\nonterm(X)$ is a dense $G_\delta$ in $B(X)$, it suffices to show that each $B_\h(X)$ is closed and has empty interior. 

Note that $B_\h(X)\cap\nonterm(X)$ is empty and $B(X)$ is defined to be the closure of $\nonterm(X)$ so that $B_\h(X)$ has empty interior. To see that $B_\h(X)$ is closed, simply observe that  

$$B(X)- B_\h(X)=(U_{\h^*} \cup \bigcup_{\k\subsetneq\h} U_\k) \cap B(X)$$

Thus, the complement of $B_\h(X)$ is open, as required. 
\end{proof}


\section{General boundary theory}

Let $B$ be a compact metrizable space, and let $G$ be a group of homeomorphisms of $B$.
We recall some general definitions related to the dynamics of the $G$-action on $B$, and 
then present a general approach to boundary theory. Our approach is motivated by that of  Margulis \cite[Ch. VI]{M} and Guivarc'h-Le Page \cite{GLP}, and we will show that it can handle not only linear groups acting on projective space, but is in fact well-suited to handle the boundary theory of cube
 complexes as well.

\begin{definition}\label{MPC}{\bf Minimality, proximality, contractibility} 
\begin{enumerate}

\item The $G$-action on $B$ is called {\it minimal} if for every $b\in B$ and  
 for any non-empty open set $U\subset B$, there exist $g\in G$ such that $gb\in U$. 
\item The $G$-action on $B$ is called proximal if for any two points $b_1,b_2\in B$, there exists a point $c\in B$, such that for every neighbourhood 
$U$ of $c$, there exists $g\in G$ such that $gb_1\in U$ and $gb_2\in U$.
\item A neighbourhood $V$ of a point $b\in B$ is called contractible \cite[Ch. VI, \S 1]{M},  if 
 there exists a point $c\in B$ such that 
for every neighbourhood $U$ of $c$, there exists $g\in G$ such that $gV\subset U$.     
\end{enumerate}
\end{definition}

We shall also make use of a notion which is  stronger than the above three conditions. Roughly speaking, it says that the  ``attracting" conditions stated above for points of $B$ hold with respect to the action on some larger space in which $B$ is contained. 

\begin{definition}\label{LimitSet}
Let  $Y$ be a $G$-space. A subset $B\subset Y$ is said to be a  {\it boundary limit set} for the action of $G$ on $Y$  if $B$ is  $G$-invariant and
\begin{enumerate}
\item for every point $y\in Y$, and every non-empty open set $U\subset Y$ such that $U\cap B\not=\emptyset$, there exists $g\in G$ such that $gy\in U$;
\item for any two points $y_1,y_2\in Y$, there exists a point $c\in B$, such that for every neighbourhood 
$U$ of $c$, there exists $g\in G$ such that $gy_1, gy_2\in U$;
\item for every $y\in Y$ there exist neighborhood $V$ of $y$ and a point $c\in B$, such that  for  every neighbourhood $U$ of $c$, there exists $g\in G$ with $gV\subset U$.     
\end{enumerate}
\end{definition}
We now recall the following definition (see \cite[Ch. VI. 2.13]{M})

\begin{definition}\label{EP}{\bf Equicontinuous decomposition.}
The $G$-action on $B$ is said to admit an equicontinuous decomposition if we can write 
$G$ as a finite union of subsets $G=\cup_{i=1}^N G_i$, and find non-empty open subsets $B_i\subset B$ 
satisfying $G\cdot B_i=B$, such that the set of functions $G_i$ (from $B_i$ to $B$) is equicontinuous 
on $B_i$.  Equivalently, fixing a metric $d$ on $B$, for every $\epsilon > 0$ there exists $\delta > 0$ such that 
if $b,b^\prime\in B_i$ satisfy $d(b,b^\prime)< \delta$ then $d(gb,gb^\prime)< \epsilon$, for all 
$g\in G_i$ simultaneously.     
\end{definition}

We now consider the space $P(B)$ of Borel probability measures on $B$, which we take with the 
(metrizable, separable) $w^\ast$-topology, namely $\eta_n\to \eta$ if and only if for every continuous function $f$ on $B$, $\int_Bfd\eta_n\to \int_Bf d\eta$.  For each $b\in B$, let $\delta_b$ denote the point  measure supported on $b$. The map $\b\mapsto\delta_b$ is continuous and embeds $B$ as a compact subset of $P(B)$, the subset of point measures.  

Given a probability measure $\mu$ on $G$, we 
can consider the convolution $\mu\ast\eta=\int_G g\eta d\mu(g)$, which is another probability measure on $B$. The operator $\eta\mapsto \mu\ast\eta$ is continuous and admits a fixed point $\nu$ satisfying 
$\mu\ast\nu=\nu$. Such a measure is called a $\mu$-stationary measure on $B$.  If $\nu$ is a $\mu$-stationary measure, the pair $(M,\nu)$ is called a $(G,\mu)$-space. 

The following two definitions were introduced in \cite{poisson}. 

\begin{definition}\label{boundary}{\bf Strong proximality, boundary space}
\begin{enumerate}
\item  The $G$-action on $B$ is called strongly proximal if given any probablity measure $\nu\in P(B)$, 
there exists $b\in B$ and a sequence $g_n\in G$ such that $g_n\mu\to \delta_b$. 
\item A minimal strongly proximal $G$-space $B$ is called a boundary. 
\end{enumerate}
\end{definition} 

A statistical version of proximality is given by the following 

\begin{definition}\label{mu-boundary}{\bf $\mu$-boundary, $\mu$-proximality, mean proximality}. 
\begin{enumerate}
\item The $(G,\mu)$-space $(B,\nu)$ is called a $\mu$-boundary if given a sequence $\omega=(\omega_n)_{n\in \NN}$ of independent  random variables with values in $G$ and common distribution $\mu$, the sequence of probability measures $\omega_1\omega_2\cdots\omega_n \nu$ converges 
with probability one to a limit point measure $\delta_{Z(\omega)}$. 
\item The $G$-space $B$ is called $\mu$-proximal if for every $\mu$-stationary measure 
$\nu\in P(B)$, the $(G,\mu)$-space $(B,\nu)$ is a $\mu$-boundary. 
\end{enumerate}
\end{definition}

We recall the following  results. 

\begin{theorem}\label{M1}\cite[Ch VI. Prop. 1.6]{M}
\begin{enumerate}
\item
If $B$ is a proximal $G$-space and every point has a contractible neighbourhood, then 
$B$ is strongly proximal.
\item If $B$ is proximal, minimal and contains a non-empty  contractible open set, then it is strongly proximal, namely a boundary of $G$.
\end{enumerate}
\end{theorem}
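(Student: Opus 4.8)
The plan is to prove part (1) directly by a measure‑concentration argument inside $P(B)$, and then deduce part (2) as an immediate corollary. Fix a metric $d$ on $B$ and, for $c\in B$ and $\epsilon>0$, write $\overline B(c,\epsilon)=\{x\in B:d(x,c)\le\epsilon\}$. I will use throughout that $P(B)$ is compact, that $\eta\mapsto g\eta$ is a homeomorphism of $P(B)$ for each $g\in G$, and — repeatedly — the portmanteau inequality $\limsup_n\eta_n(F)\le\eta(F)$ for closed $F$ whenever $\eta_n\to\eta$ weakly. Given $\nu\in P(B)$, set $Q=\overline{G\nu}\subset P(B)$, which is compact and $G$‑invariant; strong proximality is exactly the assertion that $Q$ contains a Dirac mass, equivalently that
\[
\rho^\ast:=\sup\{\eta(\{b\}):\eta\in Q,\ b\in B\}
\]
equals $1$. (Indeed, if $\rho^\ast=1$, choose $\eta_k\in Q$, $b_k\in B$ with $\eta_k(\{b_k\})\to1$; after a subsequence $b_k\to b$, $\eta_k\to\eta\in Q$, and portmanteau gives $\eta(\overline B(b,\epsilon))=1$ for every $\epsilon$, so $\eta=\delta_b\in Q$, whence $g_n\nu\to\delta_b$ for a sequence $g_n$.)

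The engine is a concentration lemma: if $V$ is a contractible neighbourhood with contraction target $\bar c$ and $\eta$ is any finite positive measure on $B$, then, choosing $g_k\in G$ with $g_kV\subseteq\overline B(\bar c,1/k)$ (possible by contractibility) and passing to a weak‑$\ast$ limit $\mu$ of $(g_k\eta)$ — which lies in $\overline{G\eta}$, hence in $Q$ when $\eta\in Q$ — one gets $g_k\eta(\overline B(\bar c,\epsilon))\ge\eta(V)$ for $k$ large, so $\mu(\overline B(\bar c,\epsilon))\ge\eta(V)$ for every $\epsilon$, and therefore $\mu(\{\bar c\})\ge\eta(V)$. Since $B$ is compact and every point has a contractible neighbourhood, $B$ is covered by finitely many contractible open sets $V_1,\dots,V_N$; as $\nu(V_i)\ge1/N$ for some $i$, the lemma applied inside $Q$ already yields $\rho^\ast\ge1/N>0$.

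The heart of the proof is upgrading $\rho^\ast>0$ to $\rho^\ast=1$. Suppose $\rho^\ast<1$ and fix $\eta\in Q$, $b_0\in B$ with $t:=\eta(\{b_0\})$ as close to $\rho^\ast$ as we wish. Pick $V_i$ among the cover with $\eta(V_i\setminus\{b_0\})\ge(1-t)/N$ and contraction target $\bar c_i$; run the lemma, extracting $g_k\in G$ with $g_kV_i\subseteq\overline B(\bar c_i,1/k)$, a limit $\mu\in Q$ of $(g_k\eta)$, and — by compactness of $B$ — a subsequential limit $c'$ of $(g_kb_0)$. Since each $g_k$ is a homeomorphism it carries the atom at $b_0$ to an atom of the same mass $t$ at $g_kb_0$, so $\mu(\{c'\})\ge t$; and the lemma applied to $\eta|_{V_i\setminus\{b_0\}}$ (whose $g_k$‑pushforward sits inside $g_kV_i\subseteq\overline B(\bar c_i,1/k)$ and is disjoint from $\{g_kb_0\}$) gives $\mu(\{\bar c_i\})\ge(1-t)/N$. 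If $c'\ne\bar c_i$, feed the pair $(\bar c_i,c')$ into proximality to obtain $h_k\in G$ with $h_k\bar c_i,\,h_kc'\to d$; a weak‑$\ast$ limit $\mu'\in Q$ of $(h_k\mu)$ then satisfies $\mu'(\{d\})\ge\mu(\{\bar c_i\})+\mu(\{c'\})\ge t+(1-t)/N$. If $c'=\bar c_i$, then for $k$ large the atom at $g_kb_0$ and the mass $\ge(1-t)/N$ carried on $g_k(V_i\setminus\{b_0\})$ are disjoint and both lie in $\overline B(\bar c_i,\epsilon)$, so already $\mu(\{\bar c_i\})\ge t+(1-t)/N$. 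Either way $Q$ contains a measure with an atom of mass $\ge t+(1-t)/N$; but $t+(1-t)/N\to\rho^\ast+(1-\rho^\ast)/N>\rho^\ast$ as $t\to\rho^\ast$, so for $t$ close enough to $\rho^\ast$ this contradicts the definition of $\rho^\ast$. Hence $\rho^\ast=1$ and $B$ is strongly proximal.

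Part (2) is then a short deduction: by minimality every $G$‑orbit is dense, so the non‑empty open contractible set $W$ meets every orbit, hence $\{gW:g\in G\}$ is an open cover of $B$ with a finite subcover $g_1W,\dots,g_mW$. Each $g_iW$ is again contractible — if $W$ contracts to $c$ using elements $h$, then $g_iW$ contracts to the same $c$ using $hg_i^{-1}$ — so every point of $B$ lies in a contractible open set; part (1) applies, $B$ is strongly proximal, and, being minimal as well, it is a boundary of $G$ in the sense of Definition~\ref{boundary}. I expect the implication $\rho^\ast>0\Rightarrow\rho^\ast=1$ — in particular the dichotomy between the transported atom being absorbed into the concentrated blob and its having to be merged with it via proximality — to be the only genuine obstacle; compactness of $Q$, the portmanteau estimates, and the cover manipulations in part (2) are routine.
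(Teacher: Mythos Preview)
The paper does not supply its own proof of this theorem: it is quoted verbatim as \cite[Ch.~VI, Prop.~1.6]{M} and used as a black box in Corollary~\ref{conditions}. So there is no in-paper argument to compare against.

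Your proof is correct and is, in fact, essentially the classical atom-accumulation argument one finds in Margulis's book. A couple of minor points worth tightening. First, in the case $c'\ne\bar c_i$ you implicitly use that $b_0\notin V_i$: indeed, if $b_0\in V_i$ then $g_kb_0\in g_kV_i\subseteq\overline B(\bar c_i,1/k)$ forces $c'=\bar c_i$. It is cleaner to state this, since it is what guarantees that the mass $\eta(V_i)$ and the atom $\eta(\{b_0\})$ are genuinely disjoint contributions before passing to the limit. Second, in the merging step you should pass to the closed set $\overline B(\bar c_i,\epsilon)\cup\overline B(c',\epsilon)$ and apply portmanteau to it directly, rather than bounding $\mu(\{\bar c_i\})$ and $\mu(\{c'\})$ separately; otherwise one has to argue that the two lower bounds do not double-count, which your present phrasing leaves slightly implicit. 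With these cosmetic fixes the argument is complete; the deduction of part~(2) from part~(1) via a finite cover by translates of $W$ is the standard one.
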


\begin{theorem}\label{GR}\cite{GLP} 
If the $G$-action on $B$ is strongly proximal and admits an equicontinuous decomposition, then for every 
probability measure $\mu$ on $G$ whose support generates $G$ as a semigroup, 
the $\mu$-stationary measure $\nu$ is unique, and 
$(B,\nu)$ is a $(G,\mu)$-boundary. 
\end{theorem}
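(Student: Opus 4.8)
The plan is to deduce both assertions from the martingale convergence theorem, once strong proximality has been made \emph{quantitative} via the equicontinuous decomposition. First I would fix a $\mu$-stationary measure $\nu\in P(B)$, take $\omega=(\omega_n)$ i.i.d.\ with law $\mu$ as in Definition \ref{mu-boundary}, and observe that $\nu_n^\omega:=\omega_1\cdots\omega_n\nu$ is a $P(B)$-valued martingale: conditioning on $\omega_1,\dots,\omega_n$ and using $\mu\ast\nu=\nu$ gives $\EE[\nu_{n+1}^\omega\mid\omega_1,\dots,\omega_n]=\nu_n^\omega$. Since $P(B)$ is a weak-$\ast$ compact, metrizable, hence second-countable subset of the unit ball of $C(B)^\ast$, testing against a countable dense family of continuous functions and invoking the scalar martingale convergence theorem gives almost sure weak-$\ast$ convergence $\nu_n^\omega\to\nu_\infty^\omega$; taking expectations and using $\nu=\mu^{\ast n}\ast\nu=\EE[\nu_n^\omega]$ with dominated convergence yields $\nu=\EE[\nu_\infty^\omega]$.

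The heart of the matter is to show $\nu_\infty^\omega$ is almost surely a point mass, which is precisely the statement that $(B,\nu)$ is a $\mu$-boundary with $\nu_\infty^\omega=\delta_{Z(\omega)}$. For $r>0$ I would set $m_r(\eta)=\sup_{b\in B}\eta(\overline B(b,r))$; this is convex in $\eta$, and $\nu_\infty^\omega$ is a point mass exactly when $m_r(\nu_\infty^\omega)=1$ for a sequence $r\downarrow 0$. Convexity of $m_r$ together with $\mu\ast\nu=\nu$ and Jensen's inequality makes $M_n:=m_r(\nu_n^\omega)$ a bounded submartingale, so $\EE[M_n]$ is nondecreasing and $M_n\to m_r(\nu_\infty^\omega)$ almost surely; it therefore suffices to prove $\EE[M_n]\to 1$. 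I would reduce this to a uniform contraction estimate: there should exist $\ell=\ell(r,\varepsilon)$ and $p=p(r,\varepsilon)>0$ such that for every $\eta\in P(B)$ there is a word $s_1\cdots s_\ell$ in $\operatorname{supp}\mu$ with $m_r(s_1\cdots s_\ell\eta)\ge 1-\varepsilon$; since $G$ is countable and $\operatorname{supp}\mu$ generates $G$ as a semigroup we then have $\mu(s_1)\cdots\mu(s_\ell)\ge p$, so at every block of $\ell$ steps the walk raises $M$ above $1-\varepsilon$ with probability at least $p$, and a routine iteration against the submartingale property forces $\EE[M_n]\to 1$. Letting $r\downarrow 0$ gives $\nu_\infty^\omega=\delta_{Z(\omega)}$ almost surely, so $(B,\nu)$ is a $\mu$-boundary.

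The uniform contraction estimate is where strong proximality and the equicontinuous decomposition $G=\bigcup_{i=1}^N G_i$ (with open $B_i$, $G\cdot B_i=B$, and $G_i$ equicontinuous on $B_i$) are used together. Fixing $i$, compactness gives a finite subcover $B=g_1B_i\cup\cdots\cup g_MB_i$, so any $\eta$ satisfies $g_j^{-1}\eta(B_i)\ge 1/M$ for some $j$; strong proximality supplies \emph{some} element contracting $B$ near a point of $B_i$, and applying it after $g_j^{-1}$ while invoking the \emph{uniform} modulus of continuity of $G_i$ on $B_i$ upgrades this to constants $\ell,p$ independent of $\eta$. Once $(B,\nu)$ is known to be a $\mu$-boundary for \emph{every} stationary $\nu$, uniqueness is formal: if $\nu_1,\nu_2$ are stationary then so is $\nu=\tfrac12(\nu_1+\nu_2)$, so $\omega_1\cdots\omega_n\nu\to\delta_{Z(\omega)}$ a.s.; but this equals $\tfrac12\,\omega_1\cdots\omega_n\nu_1+\tfrac12\,\omega_1\cdots\omega_n\nu_2$, each summand converging to a point mass $\delta_{Z_j(\omega)}$, and a convex combination of two point masses is a point mass only if they agree, so $Z_1=Z_2=Z$ a.s.\ and hence $\nu_1=\EE[\delta_{Z(\omega)}]=\nu_2$.

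I expect the main obstacle to be the middle step: turning the purely qualitative strong proximality (a contracting sequence exists for each individual measure, with no control over which sequence or how fast) into the quantitative ``contract any measure within $\ell$ steps with probability at least $p$'', uniformly in the starting measure. The equicontinuous decomposition is exactly the device that supplies this uniformity, and making the bookkeeping of the finite subcover, the equicontinuity modulus, and the submartingale iteration fit together cleanly is the delicate part; everything else (martingale convergence, the convexity of $m_r$, and the reduction of uniqueness to the $\mu$-boundary property) is essentially soft.
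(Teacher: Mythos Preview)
The paper does not prove this theorem; it is quoted from Guivarc'h--Le~Page \cite{GLP} without argument, so there is no proof in the paper to compare yours against. Your overall architecture---martingale convergence of $\nu_n^\omega=\omega_1\cdots\omega_n\nu$, the convex concentration function $m_r$, the bounded submartingale $M_n=m_r(\nu_n^\omega)$, and the deduction of uniqueness from $\mu$-proximality via $\tfrac12(\nu_1+\nu_2)$---is the standard one and those pieces are correct.

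The gap is in how the equicontinuous decomposition enters. Your uniform contraction estimate asks for a word $s_1\cdots s_\ell$, depending on $\eta$, with $m_r(s_1\cdots s_\ell\,\eta)\ge 1-\varepsilon$, and you then want to apply it ``at every block of $\ell$ steps''. But conditioned on $\cF_n$ one has $\nu_{n+\ell}^\omega=g_n\cdot(\omega_{n+1}\cdots\omega_{n+\ell}\,\nu)$ with $g_n=\omega_1\cdots\omega_n$ already fixed: the fresh increments act on $\nu$, and then the already-determined $g_n$ is applied from the left. No event on $(\omega_{n+1},\dots,\omega_{n+\ell})$ realises ``left-multiply $\nu_n^\omega$ by a chosen word'', so your iteration does not correspond to the walk. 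The correct use of equicontinuity runs in the opposite direction. For each piece $G_i$, choose once and for all a point $b_i\in B_i$ and a word $h_i$ in $\operatorname{supp}\mu$ with $h_i\nu\bigl(B(b_i,\delta)\bigr)\ge 1-\varepsilon$, where $\delta$ is the equicontinuity modulus of $G_i$ on $B_i$ at scale $r$; strong proximality together with $G\!\cdot\! B_i=B$ supplies such $h_i$ and $b_i$. Then for \emph{every} $g\in G_i$ one has $g h_i\nu\bigl(B(gb_i,r)\bigr)\ge h_i\nu\bigl(B(b_i,\delta)\bigr)\ge 1-\varepsilon$, hence $m_r(gh_i\nu)\ge 1-\varepsilon$. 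Thus whenever $g_n\in G_i$, the event $\{\omega_{n+1}\cdots\omega_{n+|h_i|}=h_i\}$ forces $M_{n+|h_i|}\ge 1-\varepsilon$, and now the submartingale iteration goes through with $p=\min_i\prod_j\mu\bigl((h_i)_j\bigr)>0$. The point is that equicontinuity is a \emph{non-expansion} statement for $G_i$ on $B_i$, which is exactly what makes a single fixed contracting word $h_i$ survive left-multiplication by an arbitrary $g\in G_i$; your sketch (``applying it after $g_j^{-1}$ while invoking the uniform modulus of continuity'') has the roles reversed and does not yield the estimate you need.
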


   
 We summarize the previous discussion in the following 
   
\begin{corollary}\label{conditions}
If the $G$-action on $B$ is minimal, proximal, contains a non-empty contractible open set and admits 
an equicontinuous decomposition, then  $(B,\nu)$ is  a boundary.  In addition, it is a $(G,\mu)$-boundary for the unique $\mu$-stationary 
measure $\nu$ on $B$, where $\mu$ is any probability measure on $G$ whose support generates 
$G$ as a semigroup.
\end{corollary}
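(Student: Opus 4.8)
The plan is to assemble Corollary~\ref{conditions} directly from the two cited results, Theorem~\ref{M1} and Theorem~\ref{GR}, together with the definitions of the relevant notions. The statement has two halves: first, that $B$ is a boundary, i.e.\ minimal and strongly proximal; second, that it is a $(G,\mu)$-boundary for the unique $\mu$-stationary measure.

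For the first half, I would argue as follows. By hypothesis the $G$-action on $B$ is proximal, minimal, and contains a non-empty contractible open set $V$. This is exactly the hypothesis of Theorem~\ref{M1}(2), which yields that $B$ is strongly proximal. Combined with minimality, this says by Definition~\ref{boundary}(2) that $B$ is a boundary of $G$. (One small remark worth inserting: a contractible \emph{open} set in the sense of Definition~\ref{MPC}(3) means every point of it has the contracting property simultaneously for the whole set $V$, which is precisely what Theorem~\ref{M1} needs; no further work is required here.)

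For the second half, I would invoke Theorem~\ref{GR}. We have just shown the $G$-action on $B$ is strongly proximal, and by hypothesis it admits an equicontinuous decomposition in the sense of Definition~\ref{EP}. Hence Theorem~\ref{GR} applies verbatim: for any probability measure $\mu$ on $G$ whose support generates $G$ as a semigroup, the $\mu$-stationary measure $\nu$ on $B$ is unique, and $(B,\nu)$ is a $(G,\mu)$-boundary. This is the remaining assertion of the corollary, so the proof is complete once both invocations are recorded.

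I do not anticipate a genuine obstacle here: the corollary is purely a bookkeeping consequence of the two quoted theorems, and the only thing to be careful about is matching the hypotheses precisely — in particular making sure that ``contractible open set'' as used in Theorem~\ref{M1} coincides with Definition~\ref{MPC}(3)/the open-set version needed, and that the semigroup-generation hypothesis on $\mu$ is carried through from Theorem~\ref{GR} without weakening it to group-generation. If anything needed elaboration it would be a one-line reminder that a $(G,\mu)$-boundary in the sense of Definition~\ref{mu-boundary}(1) that is also strongly proximal and minimal deserves the name ``boundary,'' but that is already built into the definitions, so the write-up can be kept to a few lines citing \cite{M} and \cite{GLP}.
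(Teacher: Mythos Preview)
Your proposal is correct and matches the paper's approach exactly: the paper presents this corollary as a direct summary of Theorems~\ref{M1} and~\ref{GR} without giving a separate proof, and your write-up simply makes explicit the two invocations (first \ref{M1}(2) for strong proximality, then \ref{GR} for uniqueness of $\nu$ and the $(G,\mu)$-boundary property). There is nothing more to add.
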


Corollary \ref{conditions} will be our basic tool : we will prove below that the action of a subgroup $G$ of the automorphism group of a \cat cube complex satisfes the hypotheses required 
by the corollary, provided certain natural assumptions on the cube complex and the group $G$ are satisfied. 


\section{Establishing that $B(X)$ is a boundary}

The aim of this section is to show that for an essential, strictly non-Euclidean \cat cube complex $B(X)$ is a boundary. More precisely, we establish the following theorem.

\begin{theorem}\label{B(X) UMSP}
Let $X$ be an essential, strictly non-Euclidean \cat cube complex  admitting a proper co-compact action 
of  $G\subset \Aut(X)$. Then the $G$-action on $B(X)$ is minimal and strongly proximal, and every probability measure $\mu$ whose support generates as a semigroup $G$ has a unique stationary measure $\nu$. 
The $(G,\mu)$-space $(B(X),\nu)$ is a $\mu$-boundary.  
\end{theorem}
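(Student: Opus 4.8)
The plan is to verify the four hypotheses of Corollary~\ref{conditions} — minimality, proximality, the existence of a non-empty contractible open set, and an equicontinuous decomposition — for the $G$-action on $B(X)$, and then invoke the corollary directly. In fact I would prove the slightly stronger statement that $B(X)$ is a \emph{boundary limit set} (Definition~\ref{LimitSet}) inside the ambient Roller compactification $\cU(X)$, since the half-space dynamics naturally produces attracting behavior on all of $\cU(X)$, not just on $B(X)$; the three conditions of Definition~\ref{LimitSet} then specialize to minimality, proximality and contractibility for the subspace $B(X)$.

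The geometric engine for all three ``attracting'' statements is the Double Skewering Corollary together with the structure of $\cU$-halfspaces $U_\h$, which form a sub-basis for the topology. The key lemma I would isolate first: if $\h\subsetneq \k$ are nested half-spaces and $g\in G$ skewers them with $g^n\k\subsetneq\h$, then $g^{n}\,U_{\k}\subset U_{\h}$ — that is, $g^n$ compresses the $\cU$-halfspace $U_\k$ into the smaller one $U_\h$; and iterating, $g^{nm}U_\k$ shrinks down, its nested intersection being a single point of the Roller boundary which lies in $\nonterm(X)\subset B(X)$ (here I would use Remark~\ref{ManyIDU}, that every half-space contains a nonterminating ultrafilter, to locate the limit point precisely, and Corollary~\ref{NonTrivialIntersection} to see the intersection is non-empty). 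Granting this, contractibility of $B(X)$ is immediate: any basic open set is a finite intersection of $U_\h$'s, so it sits inside some single $U_\k$, and we compress. Proximality follows by the same device: given $y_1,y_2\in\cU(X)$, pick a half-space $\h$ with $y_1,y_2\in U_\h$ (or handle them separately if they are separated by using the Facing Triple Lemma to find a half-space containing neither $y_1$ nor $y_2$ in its complement — more carefully, first move $y_1,y_2$ into a common $\cU$-halfspace using single skewering, then compress). For minimality, given any $y\in\cU(X)$ and any non-empty $U\cap B(X)$, I would use that $U$ contains some $U_\k\cap B(X)$ with a nonterminating ultrafilter inside; choosing a chain $\h_1\supsetneq\h_2\supsetneq\cdots$ inside $\k$ realizing that ultrafilter, and group elements skewering appropriately, one pushes $y$ into $U_\k$.

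The equicontinuous decomposition is, I expect, the main obstacle, since it is the one condition that is genuinely global rather than a soft consequence of skewering. The metric on $\cU$ is $d(\alpha,\beta)=\sup\{1/d(\hh,\bp)\mid \hh\text{ separates }\alpha,\beta\}$, so two ultrafilters are $\delta$-close iff they agree on all hyperplanes within distance $1/\delta$ of the basepoint. The natural attempt is to cover $G$ by finitely many sets $G_i$ according to the ``combinatorial type'' of how $g$ moves a fixed large ball — more precisely, by cocompactness pick a finite set $F$ of vertices meeting every $G$-orbit, index the pieces by $g\in G_i$ iff $g^{-1}\bp$ lies in a prescribed orbit-representative's bounded neighborhood, and let $B_i$ be a $\cU$-halfspace ``far from the corresponding return region.'' Making equicontinuity precise requires the quantitative statement that if $\alpha,\beta\in B_i$ disagree only on hyperplanes very far from $\bp$, then $g\alpha,g\beta$ disagree only on hyperplanes far from $\bp$, uniformly over $g\in G_i$; this should follow because $g$ is a cellular isometry for the $\ell_1$-metric and $g^{-1}\bp$ is controlled, so a hyperplane close to $\bp$ in the image corresponds to a hyperplane close to $g^{-1}\bp$ in the source, which is at bounded distance from $\bp$. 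I would model the argument on Margulis's treatment for projective space, the role of ``expanding/contracting coordinates'' being played here by the finitely many combinatorial types of half-space arrangements near $\bp$; the local finiteness and finite dimensionality of $X$ (hence finitely many hyperplanes at each distance) are what make the bookkeeping finite. Once the four hypotheses are in hand, Corollary~\ref{conditions} gives that $(B(X),\nu)$ is a boundary and a $(G,\mu)$-boundary for the unique $\mu$-stationary measure, which is exactly the assertion of Theorem~\ref{B(X) UMSP}.
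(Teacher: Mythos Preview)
Your overall architecture matches the paper's exactly: verify the hypotheses of Corollary~\ref{conditions} by showing $B(X)$ is a boundary limit set in $\cU(X)$ (Theorem~\ref{B(X) limit set}) and admits an equicontinuous decomposition (Theorem~\ref{equi}). The skewering/compression of $\cU$-halfspaces you describe is indeed the geometric engine, and your treatment of proximality and contractibility via the Facing Triple Lemma and iterated skewering is close to the paper's.

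There is, however, a substantive gap in your minimality argument. You assert that an open $U$ meeting $B(X)$ ``contains some $U_\k\cap B(X)$'', but this is precisely the crux and is where the strictly non-Euclidean hypothesis does its work. A basic neighbourhood of a nonterminating $\alpha$ has the form $U_{\h_1}\cap\cdots\cap U_{\h_n}$; finding some $U_\k$ inside it means finding a hyperplane $\hk$ contained in $\h_1\cap\cdots\cap\h_n\subset X$. When the $\hh_i$ pairwise cross, this intersection is a \emph{sector}, and in a Euclidean complex (e.g.\ the standard cubing of $\RR^2$) sectors contain no hyperplanes whatsoever --- so the descending chain you propose inside one $\h_i$ need not eventually fall into the others. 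The paper devotes most of the section to this point: Lemma~\ref{SectorBasis} first reduces an arbitrary basic neighbourhood of a nonterminating ultrafilter to a sector (via a complexity induction exploiting the nonterminating condition), and then Proposition~\ref{SectorHyperplane} shows that every such sector contains a hyperplane, by induction on the number of defining walls resting on a refined Corner Lemma (Lemma~\ref{Corner}) about deep half-hyperplanes. Corollary~\ref{HypsInOpenSets} is exactly the statement you assumed.

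Your equicontinuous-decomposition sketch also does not work as stated: partitioning $G$ by the location of $g^{-1}\bp$ does not yield finitely many pieces, since $g^{-1}\bp$ ranges over the infinite orbit $G\bp$. The paper instead fixes a single maximal cube $\sigma$, partitions $B(X)$ into its $2^n$ sectors $B_i$, and sets $G_{ij}=\{g:gB_i\subset B_j\}$; a lemma on nested sectors of pairs of maximal cubes (Lemma~\ref{SectorContainment}) then gives the non-expansion estimate $d(g\alpha,g\beta)\le d(\alpha,\beta)$ for $g\in G_{ij}$ and $\alpha,\beta\in B_i$. Your intuition that hyperplanes close to $\bp$ in the image come from hyperplanes close to $g^{-1}\bp$ in the source is in the right spirit, but the maximal-cube sector partition is what makes the decomposition both finite and genuinely non-expanding.
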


In order to prove Theorem \ref{B(X) UMSP}, by Corollary \ref{conditions} it suffices to show that the action satisfies conditions laid out there. In particular, we will show that $B(X)$ is a boundary limit set and that it admits an equicontinuous decomposition.  

\subsection{$B(X)$ is a boundary limit set}

 In order to show that $B(X)$ is a boundary limit set, we need  
to explore the  topology on $\ux$ and $B(X)$ a bit more closely. Recall  that the collection of subsets of the form $U_\h=\{\alpha\vert \h\in\alpha\}$, called the collection of $\ux$-halfspaes,   forms a sub-basis for the Tychonoff topology on $\ux$, so that a basic open set consists of the intersection of finitely many $\ux$-halfspaces. Since $B(X)$ is a subset of $\ux$, we obtain a basis of open sets for $B(X)$. 


We now observe that for points in $\nonterm(X)$ the local neighborhood basis has additional structure, and this fact will play a crucial role below. 
First, we introduce the following definition. 

\begin{definition}{\bf Sectors.} 
\begin{enumerate}
\item A sector $\s$ in a cube complex $X$ is a finite intersection of halfspaces $\cap_{i=1}^n \h_i$, for which the corresponding hyperplanes $\hh_1,\ldots,\hh_n$ meet, namely $\bigcap_{i=1}^n \hh_i\neq \emptyset$. 
\item A sector $S$ of $\ux$ is a finite  intersection of $\ux$-halfspaces of the form $\bigcap_{i=1}^nU_{\h_i}$ for which the corresponding hyperplanes $\hh_1,\ldots,\hh_n$  meet, namely 
for which $\cap_{i=1}^n \h_i$ constitute a sector in $X$. 
\end{enumerate}
When $S$ is a sector $\ux$, we will use $\s$ to denote the corresponding sector in $X$ and visa versa. 
\end{definition}

We then have the following result. 

\begin{lemma}\label{SectorBasis}
Let $X$ be a  \cat cube complex. Then every neighborhood of a point $\alpha \in\nonterm (X)$ contains a neighborhood of $\alpha$ which is a sector in $\ux$.  
\end{lemma}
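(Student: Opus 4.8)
The statement to prove is Lemma \ref{SectorBasis}: every neighborhood of a nonterminating ultrafilter $\alpha$ contains a sector neighborhood of $\alpha$. A basic open set containing $\alpha$ has the form $V=\bigcap_{i=1}^n U_{\h_i}$ with each $\h_i\in\alpha$. The plan is to show that we can \emph{replace} this finite collection $\{\h_i\}$ by a single sector $\s=\bigcap_{j=1}^m \k_j$ with the $\k_j$ all in $\alpha$ and with $\bigcap_j \hat\k_j\ne\emptyset$, such that $\alpha\in \bigcap_j U_{\k_j}\subset V$. The reduction is essentially: given finitely many halfspaces in $\alpha$, find finitely many halfspaces in $\alpha$ whose hyperplanes pairwise cross (hence, by the Basic Property that pairwise-crossing hyperplanes have nonempty intersection, form a sector) and which lie deeper than the given ones.

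First I would reduce to the case $n=2$ and then iterate, or else argue for general $n$ directly; the key mechanism is the same. Take $\h_1,\h_2\in\alpha$. Two cases: either $\hat\h_1$ and $\hat\h_2$ cross, in which case $\h_1\cap\h_2$ is already a sector and we are almost done; or they do not cross. If $\hat\h_1,\hat\h_2$ are disjoint (nested, since both in $\alpha$ forces, say, $\h_1\subsetneq \h_2$ up to swapping — actually consistency forces one containment since if $\h_1,\h_2$ are disjoint halfspaces one contains the other's complement), then $U_{\h_1}\subset U_{\h_2}$ and we may discard $\h_2$. So the only real work is when the hyperplanes are disjoint but the halfspaces are nested, i.e. $\h_1\subsetneq\h_2$: here I want to find $\k\in\alpha$ with $\hat\k$ crossing $\hat\h_2$ (or deeper) and $U_\k\subset U_{\h_1}$. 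This is where nonterminating-ness of $\alpha$ enters decisively: since $\alpha$ is nonterminating, there is a strictly descending chain $\h_1\supsetneq \k_1\supsetneq \k_2\supsetneq\cdots$ all in $\alpha$. The idea is that deep enough down this chain we escape the "shadow" of $\hat\h_2$ and can find a crossing configuration; more precisely, I would use that $U_{\k_j}\subset U_{\h_1}\subset U_{\h_2}$ for all $j$, so it suffices to produce, among halfspaces of $\alpha$ contained in $\h_1$, finitely many whose hyperplanes pairwise cross and whose intersection of $U$'s still sits inside $V$ — and since each such $U_{\k}$ with $\k\subsetneq\h_1$ already lies in $V=U_{\h_1}\cap U_{\h_2}$, it in fact suffices to find a \emph{single} sector built from halfspaces of $\alpha$ all contained in $\bigcap_i \h_i$.

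So the cleaner route: set $\h=\bigcap_{i=1}^n\h_i$ (a genuine intersection of sets, possibly not a sector). I claim there is a halfspace $\k\in\alpha$ with $\k\subsetneq \h_i$ for all $i$ simultaneously — pick a minimal-depth halfspace in $\alpha$ below all of them using the finite interval / DCC-type finiteness and nonterminating-ness, or more simply, repeatedly apply nonterminating-ness: $\h_1\supsetneq\k^{(1)}$, then among $\k^{(1)},\h_2,\dots$ intersect down, using that $\alpha$ is a consistent system. Once we have one $\k\in\alpha$ with $U_\k\subset V$, we still need $\{\k\}$ or a refinement to be a sector — but a single halfspace $U_\k$ \emph{is} a sector (the case $n=1$ of the definition, with $\bigcap \hat\k = \hat\k\ne\emptyset$ trivially). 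Hence $U_\k$ is a sector neighborhood of $\alpha$ contained in $V$, provided $\alpha\in U_\k$, i.e. $\k\in\alpha$, which holds by construction. The only genuine obstacle is guaranteeing the existence of $\k\in\alpha$ with $\k\subsetneq\h_i$ for all $i$ at once; I expect this to be the crux. I would handle it by induction on $n$: nonterminating-ness gives $\k'\in\alpha$, $\k'\subsetneq\h_1$; by consistency $\k'$ is comparable-or-transverse to each $\h_i$, and since $\h_i\in\alpha$ and $\k'\in\alpha$ the pair $(\k',\h_i^*)$ cannot both be in $\alpha$, which rules out $\h_i\subseteq\k'^{*}$ and similar bad configurations, forcing $\k'\cap\h_i$ to again be an intersection of the same type; iterating over $i=1,\dots,n$ and using finite-dimensionality (finite width) to control transversality yields the desired single deep $\k$. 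I would write this induction carefully, as it is the heart of the lemma.
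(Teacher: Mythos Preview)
Your ``cleaner route'' has a genuine gap: the claim that one can always find a single halfspace $\k\in\alpha$ with $\k\subsetneq\h_i$ for all $i$ is false. Take $X=T_1\times T_2$ a product of two trees and $\alpha=(\alpha_1,\alpha_2)$ a pair of ends; this $\alpha$ is nonterminating. Let $\h_1$ be a $T_1$-halfspace and $\h_2$ a $T_2$-halfspace, both in $\alpha$. Every halfspace of $X$ comes from one factor or the other, and its hyperplane crosses every hyperplane from the other factor; hence no halfspace $\k$ can satisfy $\k\subset\h_1$ and $\k\subset\h_2$ simultaneously. So no single $U_\k$ sits inside $U_{\h_1}\cap U_{\h_2}$. (Of course $U_{\h_1}\cap U_{\h_2}$ is already a sector here, so the lemma holds --- but your reduction mechanism does not.) Your appeal to ``finite width to control transversality'' is exactly what breaks: finite width is satisfied in this 2-dimensional example, yet the desired $\k$ does not exist. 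The earlier case analysis you sketch for $n=2$ is also incomplete: when $\hh_1,\hh_2$ are disjoint you assert the halfspaces must be nested, but they can face away from each other (each containing the other's complement), and this configuration does arise for nonterminating $\alpha$ and must be handled.

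The paper's argument avoids all of this by introducing a complexity $(N(U),D(U))$ on basic open sets, where $N$ is the number of defining halfspaces and $D$ is the sum of pairwise distances between their hyperplanes, ordered lexicographically. One takes a basic open $U'\subset U$ containing $\alpha$ of minimal complexity and argues it must be a sector: if two defining hyperplanes $\hh_1,\hh_2$ are disjoint, nonterminating-ness produces $\h_1'\subsetneq\h_1$ in $\alpha$; replacing $\h_1$ by $\h_1'$ either creates a redundancy (dropping $N$) or strictly decreases $D$, contradicting minimality. This descent argument is the missing idea --- you must allow the final sector to involve several crossing hyperplanes, not reduce to a single one.
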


\begin{proof}

First we define a certain notion of complexity for basic open sets. Given a basic open set 
$U=\cap_{i=1}^n U_{\h_i}$, we define two numbers, $D(U)$ and $N(U)$. The number $N(U)=n$ is simply the number of halfspaces defining $U$. The number $D(U)$ is the sum of the distances between the hyperplanes (minimized over all presentations of $U$ as an intersection of half-spaces):

$$D(U) = \sum d(\hh_i, \hh_j) ,$$
where the distance $d$ between non-intersecting  hyperplanes is the minimal length of a 1-skeleton path  crossing both hyperplanes, minus one.  When $\hh$ and $\hh^\prime$ intersect, $d(\hh,\hh^\prime)=0$. 
We set $C(U)=(N(U), D(U))$, and get a partial ordering on the basic open sets by ordering $C(U)$ lexicographically.

Suppose that $U=\cap_{i=1}^n U_{\h_i}$ is a basic open set containing $\alpha\in\nonterm(X)$ so that $\alpha$ is nonterminating. Now we may assume that $C(U)$ is minimal amongst all basic open sets contained in $U$ and containing $\alpha$. 

If $N(U)=1$ or $D(U)=0$, then $U$ is a sector and we are done. So suppose that $N(U)>1$ and $D(U)>0$.

First, suppose that for some $i,j$, we have $\h_i\subset\h_j$. If this were the case, then $U$ is the intersection of $\{\h_1,\ldots\h_n\} \setminus \{\h_j\}$ and so $N(U)$ can be reduced. So we may assume that for all $i\neq j$, $\h_i\not\subset\h_j$.  We call the process just described ``removing extraneous halfspaces".

Now after a possible renumbering of the $\h_i$'s, the associated  hyperplanes $\hh_1,\hh_2$ are  disjoint and $d(\hh_1,\hh_2)>0$. Now since $\alpha$ is nonterminating and $\h_1\in\alpha$, there exists a halfspace $\h_1^\prime\in\alpha$ such that $\h_1^\prime\subset \h_1$.  We will now let 

 $$U^\prime=U_{\h_1}^\prime\cap\left(\cap_{i=2}^n U_{h_i}\right).$$

By construction, we have that $U'\subset U$ and $\alpha\in U'$. Thus, by the minimality of $C(U)$, we may assume that $C(U)\leq C(U')$.

For each $j=2,\ldots,n$, suppose that $\hh_1^\prime$ is disjoint from $\hh_j$ and does not separate $\hh_1$ from $\hh_j$, in which case we obtain that $\h_1^\prime\subset\h_j$.  Thus $U^\prime$ has extraneous halfspaces that can be removed, reducing $N(U^\prime)$ and hence $C(U^\prime)<C(U)$, a contradiction. 

We thus have two possibilities.
\begin{itemize}
\item $\hh_1^\prime$ intersects every $\hh_j$, $j=2,\ldots,n$. In this case $D(U')<D(U)$, a contradiction.

\item There exists $\hh_j$, disjoint from $\hh_1^\prime$. In this case, for each such $j$, $\hh_1^\prime$ must separate $\hh_1$ and $\hh_j$, so that $d(\hh_1^\prime,\hh_j)<d(\hh_1,\hh_j)$. We thus  obtain $D(U')<D(U)$, again a contradiction.

\end{itemize}
\end{proof}

We will now need a variant of the Corner Lemma which ensures that certain types of halfspace intersections contain hyperplanes.  Let $\h_1$ and $\h_2$ be two halfspaces associated to intersecting hyperplanes  $\hh_1$ and $\hh_2$. Note that the hyperplane $\hh_1$ separates the hyperplane $\hh_2$ into two \emph{half-hyperplanes}, $\hh_2^+=\hh_2\cap\h_1$ and $\hh_2^-=\hh_2\cap\h_1^*$.  Similarly, $\hh_1$ is subdivided into two half-hyperplanes by $\hh_2$, where $\hh_1^+=\hh_1\cap\h_2$.  A half-hyperplane is called \emph{$R$-shallow} if it contained in an $R$-neighborhood of $\hh_1\cap\hh_2 $. Otherwise it is called \emph{deep}. Note that $\h_1\cap\h_2$ is bounded by the union of the two half-hyperplanes $\hh_1^+$ and $\hh_2^+$. Note also that since there are only finitely many orbits of half-hyperplanes, there exists some global constant $R>0$ such that if $\hh_1^+$ is not $R$-shallow, then $\hh_1^+$ is deep. We record this fact.

\begin{remark}
If $X$ is a cocompact cube complex, then there exists a number $R>0$ such that any half-hyperplane which is not $R$-shallow is deep.
\label{RShallow}
\end{remark}

\begin{lemma}
Let $X$ be a cocompact, essential, irreducible, non-Euclidean \cat cube complex  and let $\hh_1$ and $\hh_2$ be hyperplanes in $X$.
Let $\h_1\cap\h_2$ be one of the four sectors defined by intersecting hyperplanes $\hh_1$ and $\hh_2$. Suppose that one of the half-hyperplanes $\hh_1^+$ and $\hh_2^+$ is deep. Then the sector $\h_1\cap\h_2$ contains a hyperplane.
\label{Corner}
\end{lemma}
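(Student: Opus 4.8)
The plan is to reduce this to the Corner Lemma (Proposition~\ref{CornerHyperplane}) by producing an intersecting pair of hyperplanes \emph{inside} the sector $\h_1\cap\h_2$. By hypothesis one of $\hh_1^+$, $\hh_2^+$ is deep; say without loss of generality $\hh_1^+=\hh_1\cap\h_2$ is deep. Recall $\hh_1$ is itself a \cat cube complex, and the half-hyperplanes of $\hh_1$ cut out by the hyperplanes crossing $\hh_1$ are precisely the halfspaces of the cube complex $\hh_1$; thus $\hh_1^+$ being deep means that, inside the cube complex $\hh_1$, the halfspace $\hh_1\cap\h_2$ contains arbitrarily large balls. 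First I would invoke Remark~\ref{ManyIDU} (the strengthening of Theorem~\ref{non-empty}): since $\hh_1$ is a cocompact essential cube complex, every halfspace of it contains a nonterminating ultrafilter; in particular the halfspace $\hh_1\cap\h_2$ of $\hh_1$ does. Equivalently, there is an infinite descending chain of halfspaces of $\hh_1$ contained in $\hh_1\cap\h_2$, hence an infinite family of hyperplanes of $\hh_1$ all lying in $\hh_1\cap\h_2$. Each such hyperplane of $\hh_1$ is of the form $\hh\cap\hh_1$ for some hyperplane $\hh$ of $X$ crossing $\hh_1$, and ``lying in $\hh_1\cap\h_2$'' forces $\hh$ to cross $\h_2$ deep on the $\h_2$ side — in particular $\hh$ is disjoint from $\hh_2$ (or crosses it, which we can also handle), and crosses $\hh_1$, with the relevant halfspace of $\hh$ contained in $\h_1\cap\h_2$.

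The cleaner route: take such a hyperplane $\hh_3$ of $X$ with $\hh_3$ crossing $\hh_1$, $\hh_3$ disjoint from $\hh_2$, and the component of $\hh_3$ on the far side from $\hh_1\cap\hh_2$ contained in $\h_1\cap\h_2$. Now apply the Corner Lemma to the intersecting pair $\hh_1,\hh_3$: there exist two hyperplanes lying in diagonally opposite components of $X\setminus\{\hh_1,\hh_3\}$. Choosing the corner that sits inside $\h_1$ on the $\hh_3$-side away from $\hh_1\cap\hh_2$, and noting that side is contained in $\h_1\cap\h_2$, produces a hyperplane genuinely inside $\h_1\cap\h_2$. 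A little care is needed: the Corner Lemma gives a hyperplane in \emph{each} of two opposite corners of the pair $\{\hh_1,\hh_3\}$, so I would pick the corner $\h_1^{?}\cap\h_3^{?}$ that is contained in $\h_1\cap\h_2$ — this is exactly the corner on the $\h_1$-side and on the side of $\hh_3$ pointing away from $\hh_1\cap\hh_2$, which lies in $\h_2$ by the choice of $\hh_3$. The resulting hyperplane lies in $\h_1\cap\h_2$, as desired.

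The main obstacle is the bookkeeping translating ``$\hh_1^+$ deep'' into the existence of a hyperplane $\hh_3$ of $X$ with exactly the crossing/disjointness pattern needed to feed the Corner Lemma and land the output inside $\h_1\cap\h_2$. Concretely: deep means not $R$-shallow for the $R$ of Remark~\ref{RShallow}, so $\hh_1^+$ contains points arbitrarily far from $\hh_1\cap\hh_2$ within $\hh_1$; the hyperplanes of $X$ separating such a far point from $\hh_1\cap\hh_2$ (inside the carrier structure) must cross $\hh_1$ and be disjoint from $\hh_2$, with the correct halfspace contained in $\h_1\cap\h_2$ — this uses that $\hh_2\cap\hh_1$ is where the corner meets and that any hyperplane crossing $\hh_1$ far out on the $\h_2$-side cannot cross $\hh_2$ (else it would pass near $\hh_1\cap\hh_2$, contradicting being far). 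I would isolate this as the one nontrivial geometric step; everything after it is a direct application of Proposition~\ref{CornerHyperplane} together with the observation that the chosen corner of $\{\hh_1,\hh_3\}$ sits inside $\h_1\cap\h_2$. The symmetric case, with $\hh_2^+$ deep instead, is identical after swapping the roles of $\hh_1$ and $\hh_2$.
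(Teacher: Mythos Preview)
Your approach has a genuine gap in the application of the Corner Lemma (Proposition~\ref{CornerHyperplane}). That proposition only asserts the existence of a pair of hyperplanes lying in \emph{some} pair of diagonally opposite components of $X\setminus\{\hh_1,\hh_3\}$; it does not let you choose which diagonal. So when you apply it to $\hh_1,\hh_3$ (with $\hh_3$ crossing $\hh_1$, disjoint from $\hh_2$, and $\h_3\subset\h_2$), you might get hyperplanes in $\h_1\cap\h_3^*$ and $\h_1^*\cap\h_3$ rather than in $\h_1\cap\h_3$ and $\h_1^*\cap\h_3^*$. In the bad case, neither output lies in $\h_1\cap\h_2$: the one in $\h_1\cap\h_3^*$ is on the wrong side of $\hh_3$ (it could be in $\h_2^*$ or cross $\hh_2$), and the one in $\h_1^*\cap\h_3$ is in $\h_2$ but not in $\h_1$. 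Your sentence ``so I would pick the corner $\h_1^?\cap\h_3^?$ that is contained in $\h_1\cap\h_2$'' is precisely where the argument breaks: you do not get to pick.

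This is not a technicality. The paper's proof confronts exactly this issue (in its Case~3) and needs real work to resolve it: a flipping argument inside $\stab(\hh_1)$, and when flipping fails, an application of the Flipping Lemma (Theorem~\ref{thm:Flipping}) combined with a facing-triple contradiction. Your reduction via $\hh_3$ does not sidestep this difficulty---it simply reproduces it for the new pair $\hh_1,\hh_3$. A secondary issue: invoking Remark~\ref{ManyIDU} for the hyperplane $\hh_1$ requires $\hh_1$ to be essential as a cube complex in its own right, which is not given; however, your final paragraph's more direct argument (deep $\Rightarrow$ far points $\Rightarrow$ separating hyperplanes of $\hh_1$ disjoint from $\hh_2\cap\hh_1$, hence coming from hyperplanes of $X$ disjoint from $\hh_2$ by the Helly property) does produce $\hh_3$ correctly and avoids that problem. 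The core obstruction remains the uncontrolled diagonal in the Corner Lemma.
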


\begin{proof} There are several cases, which we handle by order of difficulty.

\medskip
\noindent{\bf Case 1: One of $\hh_1^+, \hh_2^+$ is shallow.} Suppose without loss of generality, the $\hh_1^+$ is shallow, so that $\hh_2^+$ is deep. Thus, there exists $R>0$ such that $\hh_1^+$ is contained in the $R$-neighborhood of $\hh_2$. Since $\hh_2$ is essential, there exists a hyperplane $\hk\subset\h_2$ such that $d(\hk,\hh_2)>R$. It follows that $\hk\cap\hh_1=\emptyset$. 
If $\hk\subset\h_1$, we are done, so suppose that $\hk\subset \h_1^*$. Consider a geodesic path 
$\gamma$ joining $\hk$ and $\hh_2$ and let $p=\gamma\cap\hh_2$. Since the action of $stab(\hh_2)$ on $\hh_2$ is cocompact and $\hh_2^+$ is deep, there exists some $g\in\stab(\hh_2)$ such that $gp\in\h_1$. We then have that $g\gamma\in\h_1$, so that $g\hk\in\h_1$, as required.

\noindent{\bf Case 2: Both $\hh_1^+, \hh_2^+$ are deep and one of $\hh_1^-, \hh_2^-$ is shallow.}
Suppose without loss of generality that $\hh_1^-$ is shallow. By applying case 1, we have that there exists a hyperplane $\hk\subset\h_1\cap\h_2^*$.  Since the action of $\stab(\hh_1)$ on $\hh_1$ is cocompact, there exists some $g\in\stab(\hh_1)$ such that $g\hh_2\subset\h_2$. Since $\hh_1^-$ is shallow, we know that $g$ does not skewer $\hh_2$. It follows that $g\h_2^*\subset\h_2$. Thus $\hk\subset\h_1\cap\h_2$, as required.

\noindent{\bf Case 3: All half hyperplanes $\hh_1^\pm,\hh_2^\pm$ are deep.}  By Lemma \ref{CornerHyperplane}, there exists a pair of diagonally opposite sectors of $\hh_1$ and $\hh_2$ which contain hyperplanes. If that pair is $\h_1\cap\h_2$ and $\h_1^*\cap\h_2^*$, then we are done, so we may assume that $\h_1\cap\h_2^*$ and $\h_1^*\cap\h_2$ both contain hyperplanes.  
If $\hh_2\cap\hh_1$ is flippable in $\hh_1$ (i.e. there exists $g\in\stab(\hh_1)$ such that 
$g\h^*_2\subset\h_2$), then letting $\hk$ denote a hyperplane contained in $\h_1\cap\h_2^*$, we would have that $g\hk\subset\h_1\cap\h_2$, as required.

So let us assume that $\hh_2\cap\hh_1$ is unflippable in $\hh_1$. We let $\hk$ denote a hyperplane such that $\hk\subset\h_1^*\cap\h_2$. Since the hyperplane $\hh_1\cap\hh_2$ is essential in $\hh_2$, there exists $g\in\stab(\hh_2)$ such that $g\h_1\subset\h_1$.  We then have that for some $n$, either $g^n\hk\subset\h_1$ or $g^n\hk\cap\hh_1\not=\emptyset.$ By passing 
to possibly some larger power of $g$, we obtain that the half-hyperplane $g^n\hk^+=g\hk\cap\h_1$ is deep.  

Let $\hm=g^n\hk$. We now observe that for some power of $n>0$, $g^n\hm\cap\hm=\emptyset$. This is clear since the axis of $g$ does not lie in neighborhood of $\hm$.  There are three hyperplanes in $\hh_1$: $L_1=\hh_1\cap\hh_2$, $L_2=\hh_1\cap\hm$ and $L_3=\hh_1\cap g^n\hm$. We already are assuming that $L_1$ is essential in $\hh_1$. If either of $L_2$ or $L_3$ is inessential in $\hh_1$, we appeal to Case 1 above to produce the desired hyperplane. It follows that both $L_2$ and $L_3$ are essential in $\hh_1$. Thus, by the Flipping Lemma \ref{thm:Flipping}, the hyperplanes $L_1$, $L_2$ and $L_3$ do not form a facing triple. This means that one of them, without loss of generality $L_2$ separates $L_1$ from $L_3$. But this contradicts the fact that  $\hh_2$, $\hm$ and $g^n\hm$ form a facing triple in $X$. 
\end{proof}

Now we can prove our central technical result. 
\begin{proposition}
Let $X$ be a cocompact, essential, irreducible, non-Euclidean \cat cube complex admitting a proper cocompact group action. 
Let $S=\cap_{i=1}^k U_{\h_i}$ be a sector neighborhood of a nonterminating ultrafilter. Then there exists a hyperplane $\hh\in\s$. 
\label{SectorHyperplane}
\end{proposition}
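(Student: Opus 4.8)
The plan is to argue by induction on $k$, with Lemma~\ref{Corner} serving as the engine: Lemma~\ref{Corner} already produces a hyperplane inside a \emph{two}-fold sector once one of the two relevant half-hyperplanes is known to be deep, and the role of the nonterminating hypothesis will be precisely to supply such deepness. The base case $k=1$ is easy: $S=U_{\h_1}$ is a neighborhood of some nonterminating $\alpha$ with $\h_1\in\alpha$, so there is $\h_1'\in\alpha$ with $\h_1'\subsetneq\h_1$; since $\h_1'$ and $\hh_1$ are nested (not transverse), the hyperplane $\hh_1'$ and its carrier lie on the $\h_1$-side of $\hh_1$, so $\hh_1'\subseteq\h_1=\s$. (Alternatively this is Remark~\ref{ManyIDU}.)

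For the inductive step ($k\ge 2$) I would fix a nonterminating $\alpha\in S$ and, using nonterminability and $\h_k\in\alpha$, choose an infinite strictly descending chain $\h_k=\m_0\supsetneq\m_1\supsetneq\cdots$ with every $\m_j\in\alpha$. For each $j$ and each $i<k$, consistency of $\alpha$ together with $\h_i\in\alpha$ forbids $\m_j\subseteq\h_i^*$, so $\hm_j$ either crosses $\hh_i$ or $\m_j\subseteq\h_i$. If for some $j$ one has $\m_j\subseteq\h_i$ for \emph{all} $i<k$, then by nesting (and $\m_j\subsetneq\h_k$) the hyperplane $\hm_j$ lies in $\bigcap_{i=1}^k\h_i=\s$ and we are done. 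Otherwise, for every $j$ some $\hh_{i(j)}$ with $i(j)<k$ is crossed by $\hm_j$, and by pigeonhole there is a fixed index $i_0<k$ crossed by $\hm_j$ for all $j$ in an infinite set $J=\{j_1<j_2<\cdots\}$. Passing to the cube complex $\hh_{i_0}$, whose hyperplanes are exactly those hyperplanes of $X$ crossing $\hh_{i_0}$ via the bijection $\hh\mapsto\hh\cap\hh_{i_0}$, the halfspaces $\ell_m:=\m_{j_m}\cap\hh_{i_0}$ form an infinite strictly descending chain of halfspaces of $\hh_{i_0}$, all contained in the half-hyperplane $\hh_{i_0}\cap\h_k$; since $\hh_{i_0}\cap\hh_k\subseteq\ell_1^*\subseteq\ell_2^*\subseteq\cdots$, the $m$-th term is separated from $\hh_{i_0}\cap\hh_k$ by $m$ distinct hyperplanes of $\hh_{i_0}$, so $\hh_{i_0}\cap\h_k$ is not shallow, hence deep by Remark~\ref{RShallow}.

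It then remains to combine this deepness with the inductive hypothesis applied to the $(k-1)$-fold sector neighborhood $\bigcap_{i\ne i_0,\ i\le k}U_{\h_i}$ of $\alpha$ (a sector since the $\hh_i$ are pairwise crossing, and still a neighborhood of the nonterminating $\alpha$): this yields a hyperplane $\hm$ with $\hm\subseteq\bigcap_{i\ne i_0,\ i\le k}\h_i$, in particular $\hm\subseteq\h_k$. If already $\hm\subseteq\h_{i_0}$ we are done; otherwise $\hm$ crosses $\hh_{i_0}$ or lies in $\h_{i_0}^*$, and one must correct it across $\hh_{i_0}$ into $\h_{i_0}$ while staying inside the other $\h_i$. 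I expect this correction step to be the main obstacle, and it is where cocompactness, irreducibility and non-Euclideanity of $X$ must be used: namely via the cocompact action of $\stab(\hh_{i_0})$ on $\hh_{i_0}$, to push a hyperplane toward $\h_{i_0}$ along the deep half-hyperplane while keeping it deep inside each $\h_i$, and — in the flippable/unflippable dichotomy — via the Flipping Lemma~\ref{thm:Flipping}, in the spirit of the three-case analysis in the proof of Lemma~\ref{Corner}. Organizing the successive corrections cleanly may require a secondary induction on a complexity of the type $C(\cdot)=(N,D)$ used in Lemma~\ref{SectorBasis}, applied to the shrinking family of sub-sectors that arise.
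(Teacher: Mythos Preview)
Your overall architecture is right --- induction on $k$, nonterminability providing descending chains, and Lemma~\ref{Corner} as the endgame --- and the base case and the first half of the inductive step are fine. The genuine gap is in the correction step, and the tools you propose are not the right ones. Acting by $\stab(\hh_{i_0})$ preserves $\hh_{i_0}$ but has no reason to keep your hyperplane $\hm$ inside the remaining $\h_i$ for $i\neq i_0$, so pushing $\hm$ along the deep half-hyperplane $\hh_{i_0}\cap\h_k$ will in general throw it out of the sector. Neither the Flipping Lemma nor a complexity function of the $C(U)=(N,D)$ type addresses this: you need a motion that moves $\hm$ deep into the one excluded halfspace while fixing \emph{all} the others simultaneously.

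The paper's fix is precisely this: instead of showing that a single half-hyperplane $\hh_{i_0}\cap\h_k$ is deep, it shows that the \emph{full} intersection $L=\bigcap_{i\ge 2}\hh_i$ satisfies that $L^+=L\cap\h_1$ is deep (using the descending chain inside $\h_1$; if some term of the chain fails to cross $L$, one lands in a strictly smaller sector neighborhood of $\alpha$ and invokes the induction directly). Then the inductively produced hyperplane $\hk\subset\bigcap_{i\ge 2}\h_i$ is pushed by an element of $\stab(L)$. Since (up to finite index) $\stab(L)$ fixes each $\hh_i$ and hence each $\h_i$ for $i\ge 2$, the translate $g\hk$ stays inside $\bigcap_{i\ge 2}\h_i$, and one only has to get it into $\h_1$; deepness of $L^+$ and cocompactness of $\stab(L)$ on $L$ let you do that, after which Lemma~\ref{Corner} finishes. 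So your deepness conclusion is too weak (one half-hyperplane instead of $L^+$) and your stabilizer is too big (one hyperplane instead of $L$); replacing both by the full intersection $L$ closes the gap, and the argument then becomes essentially the paper's.
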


\begin{proof} The proof will be by induction on $n$. We start with $n=2$.  Let $\s=\h_1\cap\h_2$. If either of the half-hyperplanes bounding $\s$ are deep, then we may apply Lemma \ref{Corner} to conclude that $\s$ contains a hyperplane. Otherwise, suppose that $\hh_1^+$ is shallow, then we may apply the fact that $\alpha$ is nonterminating to construct a sequence of halfspaces 
$\h_2\supset\h_3\ldots$ such that $\alpha\in U_{\h_i}$ for all $i>1$. Now since $\hh_1^+$ is shallow there exists some $n$ such that $\hh_n\cap\hh_1=\emptyset.$ We then have that $\hh_n\subset\h_1\cap\h_2$ as required. 

We now suppose that $n>3$. Let $L=\bigcap_{i=2}^n \hh_i$.  Let $L^+=L\cap\h_1$ and $L^-=L\cap\h^*$. We first claim that $L^+$ is not contained in any neighborhood of $\hh_1$.  Since 
$\alpha\in U_{\h_1}$ and $\alpha$ is nonterminating, there exists a sequence of halfspaces 
$\h_1\supset\m_2\supset\m_3\ldots$ such that $\alpha\in U_{\m_i}$ for all $i$. Suppose that for some $i$ we have  $\hm_i\cap L=\emptyset$. Then there exists some $\hh_i$, $2\leq i\leq n$ such that $\hh_i\cap\hm=\emptyset$. Let $F\subset\{2,\ldots,n\}$ such that $\hh_i\cap\hm\not=\emptyset$. 

By assumption, we have that $F$ is a proper subset of $\{2,\ldots,n\}$. If $F=\emptyset$, then we would have that $\hm\subset \s$ and we are done. So now let $\s^\prime=\m\cap\bigcap_{i\in F} \h_i$. By construction $\s^\prime\subset\s$ and $\alpha\in \s^\prime$. Thus, by induction,  $\s^\prime$ contains a hyperplane and we are done. We thus have that all the hyperplanes $\hm_i$ intersect $L^+$. Since the hyperplanes $\hm_i$ correspond to a nested collection of halfspaces, we have shown our claim. 

Next we apply the inductive hypothesis to the sector $\bigcap_{i=2}^n\h_i$ to conclude that there exists a hyperplane $\hk\in\bigcap_{i=2}^n\h_i$. If $\hk\subset\h_1$, we are done, so suppose not, so that $\hk\subset\h_1^*$ or $\hk\cap\hh_1=\emptyset$. 

Let $R$ denote the number described in Remark \ref{RShallow}, so that any half-hyperplane which is not $R$-shallow is deep. 
We claim that there exists  $g\in\stab(L)$ such that $g\hk\cap\h_1$ contains a point at distance greater than $R$ from $\hh_1$. To see this, let $p$ be some point of $\bigcap_{i=1}^n \hh_i$. Let $q$ be a point of $\hk$ and let $D=d(p,q)$. Now since $L^+$ is deep, there exists $g\in\stab(\L)$ such that $d(gp,\hh_1)>R+D$. It follows from the triangle inequality that $d(gq,\hh_1)>R$. 

From this claim it follows that either $g\hk\subset\h_1$, in which case we are done, or $g\hk\cap\hh_1\not=\emptyset$ and the half-hyperplane $g\hk^+=g\hk\cap\h_1$ is deep. In the latter case,
we apply Lemma \ref{Corner} to conclude that both sectors bounded by $g\hk^+$ and $\hh_1$ contain hyperplanes. One of these sectors is contained in $\s$, so that we have a hyperplane contained in $\s$ as required. 
\end{proof}

Putting together the previous two lemmas, we obtain the following. 
\begin{corollary}
Let $X$ be a cocompact, essential, irreducible, non-Euclidean \cat cube complex.
If $U$ is an open set in $\cU(X)$ such that $U\cap B\not=\emptyset$, then $U$ contains a sub-basic open set $U_\h\subset U$.
\label{HypsInOpenSets}
\end{corollary}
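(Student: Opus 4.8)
\textbf{Proof plan for Corollary \ref{HypsInOpenSets}.} The plan is to reduce the statement about an arbitrary open set $U$ with $U\cap B\neq\emptyset$ to the two technical results just proved, namely Lemma \ref{SectorBasis} and Proposition \ref{SectorHyperplane}. First I would observe that since $U\cap B(X)\neq\emptyset$ and $\nonterm(X)$ is dense in $B(X)$ (Proposition \ref{DenseGDelta}), the open set $U$ in fact meets $\nonterm(X)$: pick $\alpha\in U\cap\nonterm(X)$. Then $U$ is a neighborhood of the nonterminating ultrafilter $\alpha$, so by Lemma \ref{SectorBasis} there is a sector $S=\bigcap_{i=1}^k U_{\h_i}$ of $\cU(X)$ with $\alpha\in S\subset U$, where the corresponding hyperplanes $\hh_1,\dots,\hh_k$ pairwise meet and $\s=\bigcap_{i=1}^k\h_i$ is a sector in $X$.

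Next I would apply Proposition \ref{SectorHyperplane} to the sector neighborhood $S$ of the nonterminating ultrafilter $\alpha$: this yields a hyperplane $\hk$ with $\hk\subset\s$, i.e. $\hk$ is contained in every $\h_i$. I now want to manufacture from $\hk$ a sub-basic open set $U_\h$ sitting inside $U$. Let $\h$ be one of the two halfspaces bounded by $\hk$, chosen so that $\h\subsetneq\h_i$ for all $i$ — this is possible because $\hk$ lies strictly inside the sector $\s$, so whichever side we are on, both halfspaces of $\hk$ are contained in $\bigcap_i\h_i$ except we need the strict containment; concretely, since $\hk$ is disjoint from each $\hh_i$ and lies in $\h_i$, the halfspace of $\hk$ facing away from $\hh_i$ (equivalently, not containing $\hh_i$) is contained in $\h_i$ for every $i$, and there is a single such halfspace $\h$ because the $\hh_i$ all lie on the same side of $\hk$ (they all miss $\hk$ and lie outside $\s\supset$ the closure of the complementary halfspace side — here one checks the $\hh_i$ lie in a common halfspace of $\hk$, using that $\hk\subset\bigcap\h_i$). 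Then by the consistency condition for ultrafilters, $\h\subset\h_i$ implies $U_\h\subset U_{\h_i}$ for each $i$, hence $U_\h\subset\bigcap_{i=1}^k U_{\h_i}=S\subset U$, which is exactly the claim.

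The step I expect to be the genuine obstacle is the bookkeeping in the previous paragraph: verifying that there is a well-defined single halfspace $\h$ of $\hk$ with $\h\subset\h_i$ simultaneously for all $i$, rather than different choices of side working for different $i$. The point is that $\hk\subset\bigcap_i\h_i$ means $\hk$ is separated from each $\hh_i$ (or equal-sided with it) in a coherent way: since $\hk$ is disjoint from $\hh_i$ and contained in the halfspace $\h_i$, the hyperplane $\hh_i$ lies entirely in one halfspace of $\hk$, namely the one not equal to the ``inner'' side; because the $\hh_i$ pairwise meet and each is a connected subset of the complement of $\hk$, they all lie in the \emph{same} halfspace of $\hk$ (a connected set in $X\setminus\hk$ lies in one component, and two meeting hyperplanes have connected union). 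The complementary halfspace $\h$ then satisfies $\h\subset\h_i$ for every $i$, and one should double-check the containment can be taken strict (so that $U_\h$ is a proper sub-basic set, though for the statement mere containment $U_\h\subset U$ suffices). Once this combinatorial fact is nailed down, the corollary follows immediately by the monotonicity $U_\h\subset U_{\h'}$ whenever $\h\subset\h'$.
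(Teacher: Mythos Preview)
Your argument is correct and is exactly the route the paper intends: it states the corollary as an immediate consequence of Lemma \ref{SectorBasis} and Proposition \ref{SectorHyperplane} without spelling out the details, and you have supplied them. Two minor remarks: density of $\nonterm(X)$ in $B(X)$ is immediate from the definition of $B(X)$ as its closure (you don't need Proposition \ref{DenseGDelta}), and the ``same side'' verification is even easier than the connectivity argument you give, since by the definition of a sector the hyperplanes $\hh_1,\dots,\hh_k$ have a common point $p\in\bigcap_i\hh_i$, which necessarily lies in a single halfspace of $\hk$ and forces all the $\hh_i$ into that halfspace.
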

We are now ready to prove that $B(X)$ is indeed a boundary limit set.

\begin{theorem}\label{B(X) limit set}
Let $X$ be an essential, strictly non-Euclidean \cat cube complex  admitting a proper co-compact action of  $G\subset \Aut(X)$.
Then $B(X)$ is a boundary limit set for the $G$ action on $\ux$. 
\end{theorem}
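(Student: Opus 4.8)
The plan is to verify the three conditions in Definition \ref{LimitSet} for $Y=\cU(X)$ and $B=B(X)$, reducing to the irreducible non-Euclidean case and then using the ``sectors contain hyperplanes'' machinery just developed (Lemma \ref{SectorBasis}, Proposition \ref{SectorHyperplane}, Corollary \ref{HypsInOpenSets}) together with Single/Double Skewering. Since $X=\prod_i X_i$ with $\cU(X)=\prod_i\cU(X_i)$ and $B(X)=\prod_i B(X_i)$, and the three defining conditions of a boundary limit set are preserved under finite products (a product of ``attracting'' group elements can be found by acting coordinate-wise, using that each factor's group is a finite-index-subgroup factor acting properly cocompactly), it suffices to prove the statement when $X$ is cocompact, essential, irreducible and non-Euclidean. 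So I would first state this reduction and then work with a single irreducible factor.

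For condition (1): let $y\in\cU(X)$ be arbitrary and $U\subset\cU(X)$ open with $U\cap B(X)\neq\emptyset$. By Corollary \ref{HypsInOpenSets}, $U$ contains a sub-basic set $U_\h$. I must produce $g\in G$ with $gy\in U_\h$, i.e. with $\h\in g^{-1}$ applied appropriately — more precisely with $g y\in U_\h$, meaning $\h\in gy$ (viewing $gy$ as an ultrafilter). Equivalently $g^{-1}\h\in y$. Pick any halfspace $\k\in y$ (an ultrafilter is nonempty); I want $g$ with $g\k\subset\h$, for then consistency gives $g\k\in gy\Rightarrow \h\in gy$. Wait — I need $g\k\subseteq \h$ to conclude $\h\in gy$ from $g\k\in gy$; so the task is: given halfspaces $\k$ and $\h$ in an essential irreducible complex, find $g\in G$ with $g\k\subsetneq\h$. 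This is a standard consequence of the Double Skewering Corollary applied to suitable hyperplanes together with the fact (Remark \ref{ManyIDU} and essentiality) that one can first move $\k$ deep inside $\h$; concretely, skewering pushes a chosen halfspace arbitrarily far, so after possibly replacing $\h$ by a deeper halfspace inside it and using that $\hat\k$ can be flipped/skewered past $\hat\h$, one gets $g\k\subset\h$. I would phrase this as a short lemma: for any two halfspaces $\k,\h$ in an essential \cat cube complex with cocompact action, there is $g\in G$ with $g\k\subset\h$, proved via Single Skewering on a hyperplane separating $\hat\k$ and $\hat\h$ (which exists once we shrink appropriately) or directly by Double Skewering on a disjoint pair.

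For conditions (2) and (3): given $y_1,y_2\in\cU(X)$ (resp.\ given $y$), I want a point $c\in B(X)$ and group elements contracting $\{y_1,y_2\}$ (resp.\ a whole neighborhood $V$ of $y$) into every neighborhood of $c$. The neighborhood basis of a point $c\in\nonterm(X)$ consists of sectors (Lemma \ref{SectorBasis}), so it is enough to produce, for every sector neighborhood $S=\bigcap_{i=1}^k U_{\h_i}$ of a chosen nonterminating $c$, an element $g$ with $gy_j\in S$ (resp.\ $gV\subset S$). Since each sector $\s=\bigcap\h_i$ contains a hyperplane by Proposition \ref{SectorHyperplane}, I can fix a halfspace $\mathfrak m\subset\s$ and then, exactly as in the proof of condition (1), find $g$ pushing a fixed halfspace of $y_1$ (and of $y_2$) inside $\mathfrak m$, hence into every $\h_i$; for the neighborhood version I would instead take $V=U_{\mathfrak n}$ for a single halfspace $\mathfrak n$ (a sub-basic neighborhood of $y$ — or if $y\notin\nonterm(X)$ use any sub-basic $U_{\mathfrak n}$ containing $y$) and find $g$ with $g\mathfrak n\subset\mathfrak m$, which forces $gU_{\mathfrak n}=U_{g\mathfrak n}\subset U_{\mathfrak m}\subset S$. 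The choice of $c$ can be a single fixed element of $\nonterm(X)$ (nonempty by Theorem \ref{non-empty}), independent of $y_1,y_2$. I expect the main obstacle to be the uniform ``pushing'' lemma — getting $g\k\subset\h$ for arbitrary halfspaces in the irreducible essential setting — which requires combining Single/Double Skewering with a deepening step, and making sure the argument genuinely uses irreducibility and non-Euclideanness only through the already-proven Proposition \ref{SectorHyperplane} and the skewering results, rather than introducing new geometric input.
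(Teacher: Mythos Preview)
Your approach to the irreducible case is essentially the paper's: find a sub-basic $U_\h\subset U$ via Corollary \ref{HypsInOpenSets}, then push a suitable halfspace into $\h$ using Flipping/Double Skewering. One small point you glossed over: for condition (2) you need a \emph{single} halfspace $\k$ with $y_1,y_2\in U_\k$ so that one element $g$ works for both points simultaneously. Such a common halfspace is not automatic for arbitrary ultrafilters; the paper obtains it from the Facing Triple Lemma (Proposition \ref{FacingTriple}): among three pairwise disjoint facing hyperplanes, the three ``outward'' halfspaces are pairwise disjoint, so each ultrafilter lies in at most one of them, and by pigeonhole some ``inward'' halfspace contains both $y_1$ and $y_2$. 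You should insert this.

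The genuine gap is your reduction to the irreducible factors. You write that ``each factor's group is a finite-index-subgroup factor acting properly cocompactly'', i.e.\ that $G$ virtually splits as a product. This is false in general: for strictly non-Euclidean products $X=X_1\times X_2$ there exist \emph{irreducible} cocompact lattices $G\subset\Aut(X_1)\times\Aut(X_2)$ (Burger--Mozes lattices in products of trees being the prototypical examples) which have no finite-index subgroup of product form. Passing to a finite-index subgroup only lets you assume $G$ preserves the factor decomposition, not that it decomposes. The paper handles this as follows: fix a vertex $v\in X_2$ and use that the slice stabilizer $\Stab_G(X_1\times\{v\})$ acts cocompactly on $X_1\times\{v\}\cong X_1$; apply the irreducible case there to get $g_1$ doing the job in the first coordinate, then pass to the stabilizer of the hyperplane $\hk_1\times X_2$ and a vertical slice inside it to find $g_2$ fixing the first-coordinate achievement while doing the job in the second coordinate, and take $g_2g_1$. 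This two-step composition with slice stabilizers is the missing ingredient in your product argument.
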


\begin{proof}[Proof of Theorem  \ref{B(X) limit set}] We need to establish the three properties in 
Definition \ref{LimitSet} where $Y=\cU(X)$ and $B=B(X)$.

We will prove the following stronger claim which immediately implies all three properties.

\medskip
\noindent{\bf Claim.} \emph{For any two ultrafilters $\alpha$ and $\beta$ there exists an open neighborhood $V$ of $\alpha$ and $\beta$ such that for any open set $U\subset \cU(X)$ such that $U\cap B(X)\not=\emptyset$, there exists $g\in G$ such that $gV \subset U$.}
\medskip

First, we establish the claim in the event that $X$ is irreducible. Let $\alpha,\beta\in \cU(X)$. By the Facing Triple Lemma \ref{FacingTriple, there exists in $X$ a facing triple of hyperplanes $T$.}  It follows that there exists a halfspace $\h$, associated to a hyperplane in $T$, such that $\alpha, \beta\in U_\h$.  We set $V=U_\h$. Now given any open set $U$ such that
 $U\cap B(X)\not=\emptyset$, we know by Corollary  \ref{HypsInOpenSets} that there exists a hyperplane $\hk\subset U$.
 Thus, for one of the halfspaces bounded by $\hk$, we have $U_\k\subset U$. Now by appealing to the Flipping Lemma or Double Skewering Lemma of \cite{CS}, we have $g\in G$ such that 
 $gV\subset U_\k\subset U$, as required. 
 
 We now need to establish the claim when $X$ is not irreducible. We shall establish the claim when $X$ is a product of two irreducible factors. The general case is established by induction. 

Suppose that $X=X_1\times X_2$. After possible passing to a subgroup of finite index in $G$, we may assume that $G$ preserves the decomposition.  Let $\alpha=(\alpha_1,\alpha_2)$ and
$\beta=(\beta_1,\beta_2)$ be two ultrafilters in $\cU(X)$. As above, in each factor $X_i$, we may find a halfspace $\h_i$ such that $\alpha_i,\beta_i\subset U_{\h_i}$. We let $V=U_{\h_1}\times U_{\h_2}$.

Let $U$ be some open set intersecting $B(X)$, so that $U$ contains a nonterminating ultrafilter $\gamma=(\gamma_1,\gamma_2)$ in 
$\nonterm(X)$. It follows that there exists an elementary neighborhood $U_1\times U_2$ such that $\gamma\in U_1\times U_2\subset U$. Note that $\gamma_1$ and $\gamma_2$ are nonterminating so that $U_1$ and $U_2$ intersect $B(X_1)$ and $B(X_2)$ respectively. By Corollary  \ref{HypsInOpenSets}, we have sub-basic open sets $U_{\k_1}\subset U_1$ and $U_{\k_2}\subset U_2$. 

We now choose a vertex $v\in X_2^{(0)}$ and let $X_1^v\equiv X_1\times \{v\}$. Note that there is a natural isometric identification of $X_1$ with  $X^v_1$; we thus identify subsets of $X_1$ with subsets of $X_1^v$.  Note that $\stab(X_1^v)$ acts cocompactly on $X_1^v$. (This is because $G$ acts preserving the product structure so that every element $g\in G$, either  $g\in\stab(X_1^v)$ 
or $g(X_1^v)\cap X_1^v =\emptyset$.) Thus we may apply the claim in the irreducible case to find some $g_1\in\stab(X_1^v)$ such that $g_1U_{\h_1}\subset U_{\k_1}$. 

Next we consider the $\hm\equiv\hk_1\times X_2$ which is a hyperplane in $X$. Note again that $\stab(\hm)$ acts on $\hm$ coccompactly. Choose some vertex in $w\in\hm$ and let $X_2^w\equiv\{w\}\times X_2$. Now $\stab(X_2^w)<\stab(\hm)$ acts cocompactly on $X_2^w$. 
Once again, we may apply the claim in the irreducible case, to find a $g_2\in\stab(X_2^w)$ such that $g_2(U_{\h_2})\subset U_{\k_2}$. Note that $g_2\in\stab(\hm)$ so that it preserves the halfspace $\k_1\times X_2$. We then have that $g_2g_1V\subset U$, as required. 
\end{proof}

\subsection{$B(X)$ admits an equicontinuous decomposition}

\begin{theorem}\label{equi}
Let $X$ be a locally finite essential strictly non-Euclidean cocompact \cat cube complex. Then $B(X)$ admits an equicontinuous decomposition.

\end{theorem}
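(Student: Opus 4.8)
The plan is to produce an explicit finite cover of $B(X)$ by sets of the form $U_\h \cap B(X)$, together with a finite family of group elements adapted to these sets, on which equicontinuity holds because of local finiteness. First I would fix the metric $d$ on $\cU(X)$ described in Section 2, with base vertex $\bp$ and $f(\hh) = 1/d(\hh,\bp)$; recall that for distinct ultrafilters $d(\alpha,\beta) = \sup\{1/d(\hh,\bp) : \hh \text{ separates } \alpha, \beta\}$. The key geometric observation is that two ultrafilters are $\epsilon$-close exactly when they agree on all hyperplanes within distance $1/\epsilon$ of $\bp$, and there are only finitely many such hyperplanes since $X$ is locally finite. So the metric is really controlled by a finite "window" of hyperplanes around any given basepoint, and moving the basepoint by a group element $g$ moves the window to a neighborhood of $g\bp$.

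The decomposition itself: since $G$ acts cocompactly, the orbit $G\bp$ is coarsely dense, and $B(X)$ is covered by finitely many $G$-translates of a single "large" open piece. More precisely, using the Single/Double Skewering Lemmas and Theorem~\ref{B(X) limit set}, for each point of $B(X)$ one finds a halfspace $\h$ with that point in $U_\h$ and with $g U_\h$ made arbitrarily small (deep into some other halfspace) by suitable $g \in G$; by compactness of $B(X)$ finitely many such $\h_1,\dots,\h_N$ suffice, giving $B_i = U_{\h_i} \cap B(X)$ with $\bigcup_i G \cdot B_i = B(X)$ (indeed each $G\cdot B_i = B(X)$ by minimality, which follows from $B(X)$ being a boundary limit set). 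For the matching sets $G_i$: I would let $G_i$ consist of those $g$ that "do not expand distances seen from within $B_i$" — concretely, $g \in G_i$ if $g$ maps the finite set of hyperplanes near $\bp$ that are relevant to separating points of $B_i$ into hyperplanes at distance $\geq$ some fixed bound, or more simply exploit that any $g$ can be written $g = g' h$ with $g' \in G_i$ a bounded-complexity element and $h$ fixing $B_i$ setwise in a controlled way. The honest way: for $\alpha,\beta \in B_i$ with $d(\alpha,\beta) < \delta$, they agree on all hyperplanes within distance $1/\delta$ of $\bp$; then $g\alpha, g\beta$ agree on all hyperplanes within distance $1/\delta$ of $g\bp$, and since all points of $gB_i$ lie in $gU_{\h_i}$, which is "deep" in a controlled region, the hyperplanes separating $g\alpha$ from $g\beta$ that are close to $\bp$ are exactly the $g$-images of hyperplanes close to $\bp$ that separate $\alpha,\beta$ — a finite, uniformly bounded set — so $d(g\alpha,g\beta) < \epsilon$ for all $g \in G_i$ simultaneously once $\delta$ is small enough.

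The main obstacle I anticipate is making the last sentence genuinely uniform over the infinite set $G_i$: one must rule out the possibility that for a cleverly chosen sequence $g_n \in G_i$, two fixed nearby points $\alpha,\beta$ get pulled apart, i.e. that $g_n$ maps a far-away hyperplane separating $\alpha,\beta$ to a near hyperplane. The resolution is that $\alpha, \beta$ are separated only by hyperplanes far from $\bp$ (beyond the window), these lie in a halfspace complementary to $\h_i$ in the relevant direction, and any $g$ that keeps $g B_i$ in a prescribed region must also keep that complementary region (hence those separating hyperplanes) far from $\bp$ — this is where cocompactness and the boundary-limit-set structure from Theorem~\ref{B(X) limit set} do the real work, and where I would spend most of the effort. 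Local finiteness then converts "far from $\bp$" into "contributes less than $\epsilon$ to the sup defining $d$", completing the argument.
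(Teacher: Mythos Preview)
Your plan has a genuine gap at the point you yourself flag: the definition of the sets $G_i$ and the uniformity argument over them. Using halfspaces $U_{\h_i}$ as the pieces $B_i$ does not naturally force a decomposition $G=\bigcup G_i$: given an arbitrary $g\in G$, there is no reason it should land in any of your $G_i$'s, because your $G_i$'s are defined by a non-expansion condition rather than by where $g$ sends the pieces. Your sketch of the uniformity step (``any $g$ that keeps $gB_i$ in a prescribed region must also keep that complementary region far from $\bp$'') is not an argument; it is a restatement of what needs to be proved, and nothing in the boundary-limit-set structure or Double Skewering gives you control over \emph{all} $g$ in an infinite $G_i$ simultaneously. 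The Skewering Lemmas produce \emph{some} $g$ with prescribed behavior, not constraints on arbitrary $g$.

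The paper's proof avoids this entirely by a different choice of decomposition. Fix a single maximal cube $\sigma$ with a vertex $o$; its $n$ hyperplanes cut $\cU(X)$ into $2^n$ sectors, giving a \emph{partition} $B=\bigcup B_i$. A short combinatorial lemma (Lemma~\ref{SectorContainment}) shows that for \emph{every} $g\in G$ there are indices $i,j$ with $gS_i\subset S_j$, and moreover that for any hyperplane $\hh$ meeting $gS_i$ one has $d(\hh,go)\le d(\hh,o)$. This last inequality is the whole point: it says that a hyperplane separating $g\alpha$ from $g\beta$ is at least as far from $o$ as the corresponding hyperplane separating $\alpha$ from $\beta$ is. Hence $d(g\alpha,g\beta)\le d(\alpha,\beta)$ for all $g\in G_{ij}=\{g:gB_i\subset B_j\}$, i.e.\ each $G_{ij}$ acts by $1$-Lipschitz maps on $B_i$, which is far stronger than equicontinuity. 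The decomposition $G=\bigcup_{i,j}G_{ij}$ is automatic because every $g$ lands somewhere by the lemma. Your halfspace cover gives neither the automatic coverage of $G$ nor the distance inequality; the sector partition through a maximal cube gives both.
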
 

We will  need to understand how maximal sectors are nested within one another.
Let $X$ be a finite-dimensional \cat cube complex. Let $\sigma,\tau$ be  maximal cubes with barycenters $\bar{\sigma}$ and $\bar{\tau}$. If $dim(\sigma)=n$ and $dim(\tau)=m$, then $\bar{\sigma}$ is the intersection of $n$ hyperplanes and $\bar{\tau}$ is the intersection of $m$ hyperplanes. There may be some hyperplanes in common. So let
$\hat\cH_\sigma=\{\hh_1,\ldots,\hh_n\}$ be the hyperplanes intersecting $\sigma$ and 
$\hat\cH_\tau=\{\hh_1,\ldots\hh_l,\hk_{l+1},\ldots,\hk_m\}$ be the hyperplanes intersecting $\tau$. Here we assume $\hk_i\not=\hh_j$ for any $i,j$. 

We now note the following about this situation.

\begin{lemma}\
\begin{enumerate}
\item There exists a sector $\s$ defined by $\hat\cH_\sigma$ which contains a sector  $\t$ bounded by $\hat\cH_\tau$.
\item For every hyperplane $\hh$ meeting $\t$, the distance $d(\hh, \bar{\tau})\leq d(\hh,\bar{\sigma})$
\end{enumerate}
\label{SectorContainment}
\end{lemma}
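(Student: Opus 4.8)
The plan is to compare the two barycenters by walking in the 1-skeleton and keeping track of which side of each relevant hyperplane we end up on. Recall $\hat\cH_\sigma=\{\hh_1,\dots,\hh_n\}$ and $\hat\cH_\tau=\{\hh_1,\dots,\hh_l,\hk_{l+1},\dots,\hk_m\}$, with the $\hh_j$'s being the common hyperplanes. For part (1), the point is that a maximal sector $\s$ is determined by choosing a side for each of $\hh_1,\dots,\hh_n$, and we want to make that choice so as to be compatible with a maximal sector $\t$ bounded by $\hat\cH_\tau$. The natural choice: for each $\hh_i$ with $i\le l$ (a common hyperplane), pick the halfspace $\h_i$ containing $\bar\tau$ (equivalently containing $\tau$ on its $\hh_i$-free side); for each $\hh_i$ with $l<i\le n$, pick the halfspace $\h_i$ containing $\bar\tau$ — this makes sense because $\hh_i\notin\hat\cH_\tau$, so $\bar\tau$ lies strictly on one side of $\hh_i$. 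Set $\s=\bigcap_{i=1}^n\h_i$. Then on the $\t$-side, for each $\hk_j$ ($l<j\le m$) choose the side $\k_j$ containing $\bar\sigma$ if $\hk_j\notin\hat\cH_\sigma$ (which holds by the assumption $\hk_j\ne\hh_i$); one must check that $\bigcap_{i=1}^l\h_i\cap\bigcap_{j=l+1}^m\k_j$ is a nonempty sector, which follows since the hyperplanes $\hh_1,\dots,\hh_l,\hk_{l+1},\dots,\hk_m$ all meet (they all meet $\tau$), so every choice of sides gives a nonempty sector by the basic facts on hyperplanes. Finally $\t\subset\s$ because the sides of $\hh_1,\dots,\hh_l$ agree by construction, and for $\hh_i$ with $i>l$ the hyperplane $\hh_i$ does not meet $\tau$, hence does not meet any sector bounded by $\hat\cH_\tau$, so $\t$ lies entirely on the side of $\hh_i$ containing $\bar\tau$, which is exactly $\h_i$.

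For part (2), fix a hyperplane $\hh$ meeting $\t$. Since $\t\subset\s$ and $\s$ is a sector bounded by $\hat\cH_\sigma$, while $\bar\sigma\in\o\s$ (the barycenter lies on the corner of its sector), the geodesic strategy is: a hyperplane $\hh'$ separates $\hh$ from $\bar\tau$ only if it separates $\hh$ from $\bar\sigma$, \emph{unless} $\hh'\in\hat\cH_\sigma\setminus\hat\cH_\tau$, i.e. $\hh'$ is one of $\hh_{l+1},\dots,\hh_n$. But if $\hh'=\hh_i$ with $i>l$ separates $\hh$ from $\bar\tau$: note $\hh$ meets $\t\subset\h_i$, and $\bar\tau\in\h_i$ as well (by our choice of side), so $\hh_i$ cannot separate $\hh$ from $\bar\tau$ — contradiction. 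Hence every hyperplane separating $\hh$ from $\bar\tau$ also separates $\hh$ from $\bar\sigma$, which gives $d(\hh,\bar\tau)\le d(\hh,\bar\sigma)$ once one recalls that the distance from a hyperplane to a vertex (or barycenter, computed via the nearest vertex) counts precisely the separating hyperplanes. More carefully: I would phrase $d(\hh,\bar\sigma)$ as $1+\#\{\text{hyperplanes separating }\hh\text{ from }\sigma\}$ and similarly for $\bar\tau$, and show the set of hyperplanes separating $\hh$ from $\tau$ injects into the set of those separating $\hh$ from $\sigma$ via the argument above.

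The main obstacle I anticipate is bookkeeping around the barycenters versus vertices — "distance to a barycenter" is not literally the 1-skeleton distance, so I need to fix a convention (e.g. $d(\hh,\bar\tau)$ means the minimal number of hyperplanes crossed by a 1-skeleton path from a vertex of $\tau$ to $\hh$, which is well-defined since all vertices of $\tau$ are on the $\hh$-free side when $\hh$ meets $\tau$, and are equidistant-up-to-the-relevant-hyperplanes otherwise) and verify that the separating-hyperplane count is the right notion under this convention. Once the convention is pinned down, the containment $\t\subset\s$ does the real work: every hyperplane ``between'' $\hh$ and $\bar\tau$ is forced, by $\t\subset\s$ and by the fact that $\hh$ meets $\t$, to also lie between $\hh$ and $\bar\sigma$, and the extra hyperplanes $\hh_{l+1},\dots,\hh_n\in\hat\cH_\sigma$ that could a priori cause trouble are exactly the ones ruled out because they neither meet $\t$ nor separate $\bar\tau$ from anything in $\t$. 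I would also double-check the edge case $l=0$ (no common hyperplanes) and $l=n$ or $l=m$ (one cube's hyperplane set contained in the other's), which are handled uniformly by the same choice of sides.
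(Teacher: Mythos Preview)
Your proposal has a genuine gap: you never invoke the \emph{maximality} of the cube $\tau$, and without it the argument breaks in both parts.

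In part~(1), two things go wrong. First, your choice of $\k_j$ (the side of $\hk_j$ containing $\bar\sigma$) is the wrong orientation. Take $X=\RR^2$, $\bar\sigma=(\tfrac12,\tfrac12)$, $\bar\tau=(\tfrac{21}{2},\tfrac12)$: then $\s=\{x>\tfrac12,\,y>\tfrac12\}$ while your $\t=\{x<\tfrac{21}{2},\,y>\tfrac12\}$, and $\t\not\subset\s$. You need $\k_j$ to be the side \emph{not} containing $\bar\sigma$. Second, and more seriously, your justification ``$\hh_i$ does not meet $\tau$, hence does not meet any sector bounded by $\hat\cH_\tau$'' is simply false: the sectors of $\tau$ cover $X$, so every hyperplane meets at least one of them. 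In the same $\RR^2$ example, $\hh_2=\{x=\tfrac12\}$ does not meet the cube $\tau$ but certainly meets two of the four sectors of $\tau$. What you must show is that $\hh_i$ misses the \emph{particular} sector $\t$ you chose, and this is where maximality of $\tau$ enters: since $\hh_i$ ($i>l$) already crosses $\hh_1,\dots,\hh_l$, if it also crossed every $\hk_j$ it would cross all of $\hat\cH_\tau$, contradicting that $\tau$ is a maximal cube. Hence $\hh_i$ is disjoint from some $\hk_j$; as $\hh_i$ passes through $\bar\sigma\in\k_j^*$ (with the corrected orientation $\t\subset\k_j$), the hyperplane $\hk_j$ separates $\hh_i$ from $\t$. (A minor point: for $i\le l$ your instruction ``pick $\h_i$ containing $\bar\tau$'' is ill-defined since $\bar\tau\in\hh_i$; the paper simply chooses these arbitrarily, which is fine since both $\s$ and $\t$ use the same choice.)

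In part~(2), your case analysis is incomplete. You assert that a hyperplane $\hh'$ separating $\hh$ from $\bar\tau$ must separate $\hh$ from $\bar\sigma$ ``unless $\hh'\in\hat\cH_\sigma\setminus\hat\cH_\tau$'', but you overlook the possibility that $\hh'\notin\hat\cH_\sigma$ yet $\hh'$ separates $\bar\sigma$ from $\bar\tau$ (with $\hh$ on the $\bar\sigma$-side). Ruling this out again requires maximality of $\tau$: one shows that such an $\hh'$, together with the hypothesis that $\hh$ meets $\t$, would have to cross every hyperplane in $\hat\cH_\tau$, forcing $\hh'\in\hat\cH_\tau$ and hence $\bar\tau\in\hh'$, a contradiction. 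This is exactly the step the paper carries out and your sketch omits.
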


\begin{proof}
For each $1\leq i\leq l$, we choose $\h_i$ arbitrarily. For $l<i\leq l+1$, we know that $\bar{\tau}\not\in\hh_i$, so we let $\h_i$ denote the halfspace containing $\bar{\tau}$.  For every $\hk_i$, $i=l+1,\ldots,m$, we note that $\bar{\sigma}\not\in\hk_i$. Thus we let $\k_i^*$ denote the halfspace not containing $\bar{\sigma}$. We let $\s=\bigcap_{i=1}^n \h_i$ and we let $\t=\bigcap_{i=1}^l\h_i\cap\bigcap_{i=l+1}^m \k_i$. Now it is clear that no hyperplane of $\hat\cH_\sigma$ meets $\t$. For $i=1,\ldots,l$, the hyperplanes of $\hh_i$ bound $\t$ and hence do not intersect it. For $i>l$, $\hh_i$ is disjoint from some $\hk_j$, and by our choice, $\hk_j$ then separates $\hh_i$ from $\t$. 
This completes the proof of $(1)$. 

To prove $(2)$, note that any hyperplane $\hh$ which separates $\bar{\sigma}$ and $\bar{\tau}$ intersects the half space $\k_i^*$ for $i=l+1,\ldots,m$. Also, if $\hh$ also meets $\t$, then it would have to intersect $\k_i$ as well for every such $i$. Note also, that each such $\hh$ would have to also meet $\hh_i$ for $i=1,\ldots,l$, since $\bar{\tau}, \bar{\sigma}\in\hh_i$ for such $i$. 
Thus every hyperplane meeting $\t$ does not separate $\bar{\tau}$ and $\bar{\sigma}$.

Now suppose that $\hh$ is some hyperplane meeting $\t$. Let $\hm_1,\ldots\hm_k$ be the hyperplanes separating $\hh$ from $\bar{\tau}$. These hyperplanes also meet $\t$ and hence do not separate $\bar{\tau}$ from $\bar{\sigma}$. It follows that the hyperplanes $\hm_i$ must be crossed by any path to from $\hh$ to $\bar{\sigma}$. Thus $d(\hh,\bar{\tau})\leq d(\hh, \bar{\sigma})$.
\end{proof}




\begin{proof}[Proof of Theorem \ref{equi}]

We choose a maximal cube $\sigma$ and let $o$ be one of the vertices of $\sigma$. We consider the hyperplanes $\hh_1,\ldots,\hh_n$ which meet $\sigma$. The sectors in $\ux$ defined by the sectors associated with the maximal cube $\sigma$  provide a partition of $\cU(X)$ and consequently of $B$. We denote this partition of $B$ (some of whose sets may be empty) by $B=\cup_{i=1}^{2^n} B_i$.

Now consider a group element $g\in G$ and consider the cube $g\sigma$. By Lemma \ref{SectorContainment}, we have that there exists a pair of sector neighborhoods in $\ux$ $S_i, S_j$ defined by $\sigma$ such that $gS_i\subset S_j$. We define

$$G_{ij}=\{g\in G\vert g(B_i)\subset  B_j\}$$

This will be the requisite decomposition of $G$. More precisely, we will argue that for any $g\in G_{ij}$ and any $\alpha,\beta\in B_i$, $d(g\alpha,g\beta)\leq d(\alpha,\beta)$.  Recall that in the metric on $B$, we order the hyperplanes by their distance from $o$, and seek the first hyperplane that separates $\alpha$ and $\beta$. More precisely, we order the hyperplanes of $X$, $\cH=\{\hh_1,\hh_2,\ldots\}$, and we set 

$$d(\alpha,\beta)=\max \{\ 1/n\ \vert\ \hh_n\ \text{separates}\ \alpha\ \text{and}\ \beta\ \}$$

Now consider the $\hh$ that realizes this maximum.  Any hyperplane separating $g\alpha$ and $g\beta$ is of the form $g\hh$, where $\hh$ separates $\alpha$ and $\beta$.  Note that by part $(2)$ of Lemma \ref{SectorContainment}, the distance $d(g\hh, o)\geq d(g\hh, go)=d(\hh,o)$. It follows
that $d(g\alpha,g\beta)\leq d(\alpha,\beta)$. 

Note also that $GB_i=B$ for all non-empty sets $B_i$ participating in the partition, since $G$ is minimal on $B$ by Theorem 5.8. \end{proof}




\section{Intervals and property $A$}

We now describe the properties of intervals connecting a vertex in the cube complex to a boundary point or two boundary points.  The existence of intervals will prove to be an  indispensable tool in analyzing the properties of the boundary, as we will see below.  The discussion in the present Section pertains to general finite-dimensional locally-finite 
cube complexes, and the group action will not come into play at all.  

Given three ultrafilters $\alpha,\beta,\gamma$, recall that the {\it median} of the three ultrafilters is defined as 

$$med(\alpha,\beta,\gamma)=(\alpha\cap\beta)\cup(\beta\cap\gamma)\cup(\alpha\cap\gamma)$$

It is easy to check that $med(\alpha,\beta,\gamma)$ is itself an ultrafilter. Given two ultrafilters, $\alpha$ and $\beta$, we now define the interval between $\alpha$ and $\beta$ to be 

$$[\alpha,\beta]=\{ \gamma\ \vert\ med(\alpha,\beta,\gamma)=\gamma\}$$
In the case that $\alpha$ and $\beta$ are vertices of $X$, the interval $[\alpha,\beta]$ is simply the collection of vertices that lie on some 1-skeleton geodesic between $\alpha$ and $\beta$ (see \cite{ChN}).

The goal of the present section is to prove the following Folner-type property for such intervals. This will be used later on to show, among other things,  that point stabilizers are amenable. 
Let $B(r)$ denotes the ball of radius $r$ about some base vertex and $\vert\cdot\vert$ the size of the intersection with the 0-skeleton.

\begin{theorem}\label{AmenableInterval}
Let $v,w$ be a vertices in a cube complex $X$ and $\alpha\in\nonterm(X)$ be a nonterminating ultrafilter. Then (for any fixed choice of base vertex as the center of the balls)  

$$\lim_{r\to\infty} \frac{\vert ([v,\alpha]\triangle[w,\alpha]) \cap B(r)\vert }{\vert ([v,\alpha]\cup[w,\alpha])\cap B(r)\vert} = 0\,.$$ 
 
\end{theorem}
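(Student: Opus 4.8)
The plan is to reduce the statement to a statement about \emph{hyperplanes} rather than vertices, using the $\ell_1$ (hyperplane-counting) metric on $X$. Recall that $\gamma \in [v,\alpha]$ iff $\gamma$ is not separated from both $v$ and $\alpha$ by any single hyperplane; concretely, a vertex $u$ lies in $[v,\alpha]$ iff for every hyperplane $\hh$, $u$ is on the same side as $v$ or on the same side as $\alpha$ (it may be on both sides only if $v$ and $\alpha$ are on the same side). Thus the symmetric difference $[v,\alpha]\triangle[w,\alpha]$ is controlled by the set of hyperplanes that ``see'' $v$ and $w$ differently, i.e. the (finite) set $\cH_{v,w}$ of hyperplanes separating $v$ from $w$, whose cardinality is $d(v,w)=:N$. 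The first step is to make this precise: I would show that if a vertex $u$ lies in $[v,\alpha]\setminus[w,\alpha]$, then $u$ must be separated from $w$ by some hyperplane in $\cH_{v,w}$ and must lie ``on the $\alpha$-side'' of that hyperplane while $w$ does not — in particular $u$ lies in a bounded-index union of $\cU$-halfspaces determined by the finitely many hyperplanes in $\cH_{v,w}$.

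Next I would exploit that $\alpha$ is nonterminating to get a cofinal descending sequence of halfspaces $\h_1 \supsetneq \h_2 \supsetneq \cdots$ in $\alpha$. The key geometric point is that both intervals $[v,\alpha]$ and $[w,\alpha]$ are ``infinite'' in the direction of $\alpha$: for each $k$, the portions $[v,\alpha]\cap \h_k$ and $[w,\alpha]\cap\h_k$ are still unbounded (they run all the way out to $\alpha$), whereas the symmetric difference $[v,\alpha]\triangle[w,\alpha]$ is confined, as noted above, to vertices that disagree with one of the finitely many hyperplanes of $\cH_{v,w}$ — and every hyperplane of $\cH_{v,w}$ is crossed by only finitely many of the $\h_k$, hence eventually $[v,\alpha]\triangle[w,\alpha] \subset \h_k^*$ for... wait — more carefully, the symmetric difference is contained in a fixed bounded neighborhood of the ``wall region'' spanned by $\cH_{v,w}$, so it has bounded diameter-type control relative to how $[v,\alpha]\cup[w,\alpha]$ grows. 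I would quantify this by showing: there is a constant $C=C(N)$ such that $([v,\alpha]\triangle[w,\alpha])\cap B(r)$ injects into the set of vertices within distance $C$ of the convex hull of $\{v,w\}$ together with a choice function over $\cH_{v,w}$, while $([v,\alpha]\cup[w,\alpha])\cap B(r)$ grows without bound in $r$. Since $X$ is locally finite, a bounded neighborhood of a finite set is finite, so the numerator is eventually constant (bounded), while the denominator tends to infinity — giving the limit $0$.

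More precisely, the cleanest route: show $[v,\alpha]\triangle[w,\alpha]$ is a \emph{finite} set. Indeed any $u$ in it is separated from exactly one of $v,w$ by some $\hh\in\cH_{v,w}$ and agrees with $\alpha$ across $\hh$; but $u\in[v,\alpha]$ also forces $u$ to lie in $[v,w]$-like convex position relative to the finitely many walls of $\cH_{v,w}$ — one checks $u$ lies in the convex hull of $v\cup w$ (a finite set, since $d(v,w)=N<\infty$ and $X$ is locally finite). Hence $[v,\alpha]\triangle[w,\alpha]$ is finite, so its intersection with $B(r)$ is bounded by a constant independent of $r$. Meanwhile $[v,\alpha]\cup[w,\alpha] \supset [v,\alpha]$, which is infinite since $\alpha\in\nonterm(X)$ is a boundary point (the descending chain $\h_k$ gives infinitely many vertices of $[v,\alpha]$, one can find a geodesic ray from $v$ ``towards'' $\alpha$), and each ball $B(r)$ captures a growing initial segment of it, so the denominator $\to\infty$. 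The ratio therefore tends to $0$.

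\textbf{Main obstacle.} The delicate point is proving that $[v,\alpha]\triangle[w,\alpha]$ is finite (equivalently, bounded). It is \emph{not} enough that the symmetric difference involves only the $N$ hyperplanes of $\cH_{v,w}$: a priori a vertex far out ``near $\alpha$'' could differ. The real content is that once a vertex $u$ has moved past all of the finitely many walls in $\cH_{v,w}$ (which happens at bounded distance, since there are only $N$ of them and $X$ is locally finite so they occupy a bounded region), it sees $v$ and $w$ identically and hence lies in $[v,\alpha]$ iff it lies in $[w,\alpha]$. Making ``moved past all walls in $\cH_{v,w}$'' precise — and showing the exceptional region is genuinely bounded rather than merely ``thin'' — is where the finite-dimensionality and local finiteness of $X$ must be used carefully; I expect this to consume the bulk of the argument, together with verifying the elementary but slightly fiddly claim that $[v,\alpha]$ is infinite for $\alpha\in\nonterm(X)$.
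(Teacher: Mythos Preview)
Your central claim---that $[v,\alpha]\triangle[w,\alpha]$ is a finite set---is false, and this is the whole difficulty of the theorem. Take $X=\ZZ^2$ with the standard cubing, $v=(0,0)$, $w=(1,0)$, and let $\alpha$ be the nonterminating ultrafilter that chooses the right half of every vertical hyperplane and the upper half of every horizontal one (the ``$(+\infty,+\infty)$ direction''). Then $[v,\alpha]$ is the first quadrant $\{(a,b):a\ge 0,\ b\ge 0\}$, while $[w,\alpha]=\{(a,b):a\ge 1,\ b\ge 0\}$. The symmetric difference is the entire column $\{(0,b):b\ge 0\}$, which is infinite and certainly not contained in any bounded neighborhood of the convex hull of $\{v,w\}$. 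In this example the numerator grows like $r$ and the denominator like $r^2$, so the ratio does tend to $0$---but for a quantitative reason your argument does not capture.

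What you do have right is that the symmetric difference is contained in the carriers of the finitely many hyperplanes separating $v$ and $w$ (more precisely, separating $v$ from $m=med(v,w,\alpha)$). The point is that these carriers are unbounded: they are codimension-one subcomplexes running through the whole interval. The content of the theorem is that each such hyperplane is \emph{meager}, meaning $\lvert C(\hh)\cap B(r)\rvert / \lvert B(r)\cap [v,\alpha]\rvert\to 0$. Proving this requires real work: the paper embeds intervals $\ell_1$-isometrically into $\RR^D$, proves a Folner-type estimate $\lvert S(r)\rvert/\lvert B(r)\rvert\to 0$ for balls in intervals (by induction on dimension, uniform in the interval), and then shows that when $\alpha$ is nonterminating, \emph{every} hyperplane in $[v,\alpha]$ is meager (since no neighborhood of any hyperplane can contain the whole interval). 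Your ``main obstacle'' paragraph correctly senses the gap but then guesses the wrong resolution: the exceptional region is genuinely only ``thin'', not bounded, and it is the thinness that must be quantified.
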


\paragraph{Remark} The property stated in the theorem may not hold for $\alpha$  not in $\nonterm(X)$. For example,
consider the standard squaring of the Euclidean plane. There are vertical hyperplanes and horizontal ones. Let $\alpha=(\infty,0)$ be the ultrafilter that contains the right half-space of every vertical hyperplane, the upper half-space of every horizontal hyperplane below the $x$-axis and the lower half-space for every horizontal hyperplane above the $x$-axis.  Let $v=(0,0)$ and $w=(1,0)$. Then it is easy to see that 

$$\lim_{r\to\infty} \frac{\vert ([v,\alpha]\triangle[w,\alpha]) \cap B(r)\vert }{\vert ([v,\alpha]\cup[w,\alpha])\cap B(r)\vert} = 1/2\,.$$

Before beginning the proof, we recall that a theorem of \cite{BC+} tells us that intervals embeds ($\ell_1$-isometrically) in the standard Euclidean cube complex in $\RR^n$. This property is the key to analyzing the properties of intervals, which will occupy us in the next five subsections. The proof of Theorem \ref{AmenableInterval} will conclude in \S 6.6. 
\medskip
\subsection{Basics of Intervals}

We will first need a few basic facts about intervals. 

\begin{lemma}
For any $\alpha, \beta\in\cU(X)$, we have $[\alpha,\beta] = \{\gamma\vert \alpha\cap\beta\subset \gamma\}$
\label{IntervalDescription}
\end{lemma}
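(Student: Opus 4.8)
The plan is to unwind the definitions of median and interval directly. Recall that $[\alpha,\beta] = \{\gamma \mid \mathrm{med}(\alpha,\beta,\gamma) = \gamma\}$ where $\mathrm{med}(\alpha,\beta,\gamma) = (\alpha\cap\beta)\cup(\beta\cap\gamma)\cup(\alpha\cap\gamma)$. So I would prove the two inclusions in the asserted set equality $[\alpha,\beta] = \{\gamma \mid \alpha\cap\beta \subset \gamma\}$.

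For the inclusion $\subset$: suppose $\gamma \in [\alpha,\beta]$, so $\gamma = (\alpha\cap\beta)\cup(\beta\cap\gamma)\cup(\alpha\cap\gamma)$. Since $\alpha\cap\beta$ is one of the three sets in this union, we immediately get $\alpha\cap\beta \subset \gamma$. This direction is essentially trivial.

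For the inclusion $\supset$: suppose $\gamma$ is an ultrafilter with $\alpha\cap\beta \subset \gamma$; I must show $\mathrm{med}(\alpha,\beta,\gamma) = \gamma$. The inclusion $\mathrm{med}(\alpha,\beta,\gamma) \subset \gamma$ holds because each of the three pieces $\alpha\cap\beta$, $\beta\cap\gamma$, $\alpha\cap\gamma$ is contained in $\gamma$ (the first by hypothesis, the other two trivially since they are subsets of $\gamma$). For the reverse inclusion $\gamma \subset \mathrm{med}(\alpha,\beta,\gamma)$, take $\h \in \gamma$. If $\h \in \alpha$ then $\h \in \alpha\cap\gamma \subset \mathrm{med}$; similarly if $\h \in \beta$ then $\h \in \beta\cap\gamma \subset \mathrm{med}$. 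The remaining case is $\h \notin \alpha$ and $\h \notin \beta$. By the Choice axiom for ultrafilters, $\h^* \in \alpha$ and $\h^* \in \beta$, so $\h^* \in \alpha\cap\beta \subset \gamma$. But then both $\h$ and $\h^*$ lie in $\gamma$, contradicting Choice for $\gamma$. Hence this case cannot occur, and $\gamma \subset \mathrm{med}(\alpha,\beta,\gamma)$, completing the proof.

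There is no real obstacle here; the only point requiring care is the case analysis in the $\supset$ direction, where the Choice axiom is used to rule out $\h \notin \alpha \cup \beta$. The whole argument is a short formal manipulation with the ultrafilter axioms, so I would write it out in just a few lines.

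\begin{proof}
If $\gamma\in[\alpha,\beta]$ then $\gamma = (\alpha\cap\beta)\cup(\beta\cap\gamma)\cup(\alpha\cap\gamma)$, and since $\alpha\cap\beta$ is one of the terms in this union we get $\alpha\cap\beta\subset\gamma$.

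Conversely, suppose $\gamma\in\cU(X)$ satisfies $\alpha\cap\beta\subset\gamma$. Each of $\alpha\cap\beta$, $\beta\cap\gamma$ and $\alpha\cap\gamma$ is contained in $\gamma$, so $\mathrm{med}(\alpha,\beta,\gamma)\subset\gamma$. For the reverse inclusion, let $\h\in\gamma$. If $\h\in\alpha$ then $\h\in\alpha\cap\gamma$; if $\h\in\beta$ then $\h\in\beta\cap\gamma$; in either case $\h\in\mathrm{med}(\alpha,\beta,\gamma)$. If $\h\notin\alpha$ and $\h\notin\beta$, then by the Choice condition $\h^*\in\alpha$ and $\h^*\in\beta$, so $\h^*\in\alpha\cap\beta\subset\gamma$, contradicting the Choice condition for $\gamma$ since also $\h\in\gamma$. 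Hence $\gamma\subset\mathrm{med}(\alpha,\beta,\gamma)$, so $\mathrm{med}(\alpha,\beta,\gamma)=\gamma$ and $\gamma\in[\alpha,\beta]$.
\end{proof}
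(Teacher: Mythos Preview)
Your proof is correct and follows the same approach as the paper, which simply records the chain of equivalences $\mathrm{med}(\alpha,\beta,\gamma)=\gamma \Leftrightarrow (\alpha\cap\beta)\cup(\beta\cap\gamma)\cup(\alpha\cap\gamma)=\gamma \Leftrightarrow \alpha\cap\beta\subset\gamma$ without further comment. You have spelled out the nontrivial direction (using the Choice axiom to rule out $\h\notin\alpha\cup\beta$) that the paper leaves implicit.
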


\begin{proof}
Observe that 

$$med(\alpha,\beta,\gamma)=\gamma\Leftrightarrow(\alpha\cap\beta)\cup(\beta\cap\gamma)\cup(\alpha\cap\gamma)=\gamma\Leftrightarrow\alpha\cap\beta\subset\gamma$$
\end{proof}

\begin{lemma}
For any $\alpha, \beta, \gamma\in\cU(X)$ and $m=med(\alpha,\beta,\gamma)$, we have that $[\alpha,\gamma]\cap[\beta,\gamma] = [m,\gamma]$.
\label{IntervalIntersection}
\end{lemma}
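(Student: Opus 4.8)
The statement to prove is Lemma~\ref{IntervalIntersection}: for any $\alpha,\beta,\gamma \in \cU(X)$ and $m = med(\alpha,\beta,\gamma)$, we have $[\alpha,\gamma]\cap[\beta,\gamma] = [m,\gamma]$. The plan is to use the characterization of intervals supplied by Lemma~\ref{IntervalDescription}, namely $[\delta,\gamma] = \{\eta \mid \delta\cap\gamma \subset \eta\}$, and reduce the whole statement to an elementary identity of subsets of $\cH$. Concretely, $\eta \in [\alpha,\gamma]\cap[\beta,\gamma]$ iff $\alpha\cap\gamma \subset \eta$ and $\beta\cap\gamma \subset \eta$, i.e. iff $(\alpha\cap\gamma)\cup(\beta\cap\gamma) \subset \eta$; while $\eta \in [m,\gamma]$ iff $m\cap\gamma \subset \eta$. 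So the lemma is equivalent to the set-theoretic identity
\begin{equation*}
m \cap \gamma = (\alpha\cap\gamma)\cup(\beta\cap\gamma),
\end{equation*}
which, unwinding $m = (\alpha\cap\beta)\cup(\beta\cap\gamma)\cup(\alpha\cap\gamma)$, reads
\begin{equation*}
\bigl[(\alpha\cap\beta)\cup(\beta\cap\gamma)\cup(\alpha\cap\gamma)\bigr]\cap\gamma = (\alpha\cap\gamma)\cup(\beta\cap\gamma).
\end{equation*}

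First I would verify the forward inclusion of the displayed identity. Distributing the intersection with $\gamma$ over the union, the left side becomes $(\alpha\cap\beta\cap\gamma)\cup(\beta\cap\gamma\cap\gamma)\cup(\alpha\cap\gamma\cap\gamma) = (\alpha\cap\beta\cap\gamma)\cup(\beta\cap\gamma)\cup(\alpha\cap\gamma)$. Since $\alpha\cap\beta\cap\gamma \subset \alpha\cap\gamma$, the first term is redundant, so the left side equals $(\beta\cap\gamma)\cup(\alpha\cap\gamma)$, which is exactly the right side. That is the entire computation; there is essentially no content beyond distributivity of $\cap$ over $\cup$ and absorption, so both inclusions come out simultaneously.

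Then I would assemble the pieces: by Lemma~\ref{IntervalDescription}, $\eta\in[\alpha,\gamma]\cap[\beta,\gamma]$ iff $\alpha\cap\gamma\subset\eta$ and $\beta\cap\gamma\subset\eta$ iff $(\alpha\cap\gamma)\cup(\beta\cap\gamma)\subset\eta$ iff $m\cap\gamma\subset\eta$ iff $\eta\in[m,\gamma]$, the second-to-last step using the identity just proved. One small point worth stating explicitly for cleanliness is that $m = med(\alpha,\beta,\gamma)$ is indeed an ultrafilter (remarked in the text just before the definition of interval), so that $[m,\gamma]$ makes sense and Lemma~\ref{IntervalDescription} applies to it; no separate argument is needed. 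There is no real obstacle here — the only thing to be careful about is keeping track of which of $\alpha,\beta,\gamma$ the median is being taken relative to, and making sure one applies $[\delta,\gamma] = \{\eta\mid \delta\cap\gamma\subset\eta\}$ with $\delta$ ranging over $\alpha$, $\beta$, and $m$ but always with the same fixed endpoint $\gamma$, which is precisely why the asymmetric-looking statement works out.
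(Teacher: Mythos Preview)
Your proof is correct and takes essentially the same approach as the paper: both reduce the claim via Lemma~\ref{IntervalDescription} to the elementary set identity $m\cap\gamma=(\alpha\cap\gamma)\cup(\beta\cap\gamma)$ (the paper writes the symmetric version $\alpha\cap m=(\alpha\cap\beta)\cup(\alpha\cap\gamma)$, which is the same statement up to relabeling). Your write-up is arguably cleaner, since it keeps the roles of $\alpha,\beta,\gamma$ consistent with the statement of the lemma.
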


\begin{proof}
$$\alpha\cap m =\alpha\cap\big(( \alpha\cap\beta)\cup (\alpha\cap\gamma) \cup (\beta\cap\gamma)\big) = (\alpha\cap\beta)\cup(\alpha\cap\gamma)$$
So that by Lemma \ref{IntervalDescription},
$$[\alpha,m]=\{\mu\vert \alpha\cap m \subset \mu\}=\{\mu\vert (\alpha\cap\beta) \cup (\alpha\cap\gamma)\subset\mu\}=[\alpha,\beta]\cap[\alpha,\gamma]$$
\end{proof}

Recall that the carrier $C(\hh)$ of a hyperplane $\hh$ is the union of all cubes intersecting $\hh$. Thus the vertices of $C(\hh)$ may be viewed as the ultrafilters satisfying DCC which contain $\h$ or $\h^*$ as minimal elements. We may then extend the notion of the carrier to all of $\cU(X)$:  an ultrafilter $\alpha$ is in $C(\hh)$ is it contains $\h$ or $\h^*$ as a minimal element. 

We now have the following:

\begin{lemma}
Suppose that $\alpha\triangle\beta<\infty$. Then for any $\gamma\in\cU(X)$, we have 
$$[\alpha,\gamma]-[\beta,\gamma]\subset\bigcup_{\hh \text{\rm \ separating } \alpha \text{\rm \ and } \beta} C(\hh)$$
\end{lemma}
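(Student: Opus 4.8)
The statement asserts that if $\alpha$ and $\beta$ differ in only finitely many halfspaces, then any $\gamma \in \cU(X)$ lying in $[\alpha,\gamma]$ but not in $[\beta,\gamma]$ must pass through the carrier of some hyperplane separating $\alpha$ and $\beta$. The plan is to argue directly from the combinatorial description of intervals supplied by Lemma \ref{IntervalDescription}, namely $[\alpha,\gamma] = \{\mu \mid \alpha\cap\gamma \subset \mu\}$ and similarly for $\beta$.

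First I would unpack what it means for $\mu \in [\alpha,\gamma] \setminus [\beta,\gamma]$: we have $\alpha\cap\gamma \subset \mu$ but $\beta\cap\gamma \not\subset \mu$. So there is some halfspace $\k \in \beta\cap\gamma$ with $\k \notin \mu$, hence $\k^* \in \mu$ by the Choice condition. Now $\k \in \gamma$, and also $\k \in \beta$. I claim the associated hyperplane $\hk$ separates $\alpha$ and $\beta$: indeed if it did not, then since $\k \in \beta$ we would have $\k \in \alpha$, so $\k \in \alpha\cap\gamma \subset \mu$, contradicting $\k^* \in \mu$. Thus $\hk$ is one of the hyperplanes separating $\alpha$ and $\beta$. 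The remaining task is to show that $\mu \in C(\hk)$, i.e. that $\k^*$ is a \emph{minimal} element of $\mu$.

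For this I would use finiteness of $\alpha\triangle\beta$. Suppose toward a contradiction that $\k^*$ is not minimal in $\mu$, so there is $\l \in \mu$ with $\l \subsetneq \k^*$. Consider the halfspace $\l$: since $\l\subsetneq\k^*$ and $\k\in\beta$ (so $\k^*\notin\beta$, hence $\beta$ contains the halfspace complementary direction), we need to track where $\l$ sits relative to $\alpha$ and $\beta$. The key point is that among all halfspaces contained in $\k^*$ and lying in $\mu$, the set of those that also separate $\alpha$ from $\beta$ is finite (a subset of the finitely many hyperplanes in $\alpha\triangle\beta$), so we can choose $\l \in \mu$, $\l \subseteq \k^*$, minimal with respect to the property that $\hl \notin \alpha\triangle\beta$ — more carefully, I would choose a descending chain in $\mu$ below $\k^*$ and use finiteness of $\alpha\triangle\beta$ to find a halfspace $\l\in\mu$ with $\l\subseteq\k^*$, $\hl$ not separating $\alpha,\beta$, and no halfspace of $\mu$ strictly between; then $\l$ agrees in $\alpha$ and $\beta$, and since $\l\subseteq\k^*$ with $\k\in\gamma$... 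I would push this to derive that $\l$ or a halfspace forced by consistency already lies in $\alpha\cap\gamma\subseteq\mu$ yet points the "wrong way," contradicting either minimality or the Choice condition. The cleanest route: show that the minimal element of $\mu$ below $\k^*$ (which exists once we restrict to a chain, using that $\alpha\triangle\beta$ is finite to bound the relevant chain or force a contradiction before DCC fails) must itself be $\k^*$.

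\textbf{Main obstacle.} The delicate step is the second one: ruling out that $\k^*$ has a proper descendant inside $\mu$ — equivalently showing $\mu$ lies in the carrier $C(\hk)$ — since $\mu$ itself need not satisfy DCC and the minimal-element argument must be fed carefully through the finiteness of $\alpha\triangle\beta$ rather than through DCC of $\mu$. I expect the right formulation is: if $\l\in\mu$ with $\l\subsetneq\k^*$, then by consistency $\l^*\supsetneq\k$, and one shows $\hl$ cannot separate $\alpha$ and $\beta$ (else it would be "beyond" $\hk$ in a way inconsistent with $\k$ lying in both $\beta$ and $\gamma$ — this uses that only finitely many hyperplanes separate them and a nesting argument), whence $\l\in\alpha$ and $\l\in\beta$; but $\l\subsetneq\k^*$ together with $\k\in\gamma$ forces, after chasing consistency through $\gamma$, a halfspace in $\alpha\cap\gamma$ witnessing a contradiction with $\k^*\in\mu$. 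Once this is pinned down, combining the two steps gives $\mu\in C(\hk)$ with $\hk$ separating $\alpha$ and $\beta$, which is exactly the desired containment.
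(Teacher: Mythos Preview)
Your first step is correct and matches the paper: choose $\k \in \beta\cap\gamma$ with $\k^*\in\mu$, and deduce that $\hk$ separates $\alpha$ and $\beta$ (otherwise $\k\in\alpha\cap\gamma\subset\mu$).

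In the second step you have the logic inverted, and the argument as written does not go through. You try to show that a halfspace $\l\in\mu$ with $\l\subsetneq\k^*$ has $\hl$ \emph{not} separating $\alpha$ and $\beta$, and then to extract a contradiction from that. The truth is the opposite, and that is exactly what makes the proof work. If $\l\in\mu$ and $\l\subseteq\k^*$, then $\k\subset\l^*$, so by consistency $\l^*\in\beta$ and $\l^*\in\gamma$. If $\hl$ did \emph{not} separate $\alpha$ and $\beta$ we would also get $\l^*\in\alpha$, hence $\l^*\in\alpha\cap\gamma\subset\mu$, contradicting $\l\in\mu$. Thus every $\l\in\mu$ with $\l\subseteq\k^*$ has $\hl$ separating $\alpha$ and $\beta$. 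Since $\alpha\triangle\beta$ is finite, the set $\{\l\in\mu:\l\subseteq\k^*\}$ is finite and nonempty (it contains $\k^*$), so it has a minimal element $\l_0$; any $\m\in\mu$ with $\m\subsetneq\l_0$ would also lie in this set, so $\l_0$ is minimal in $\mu$ and $\mu\in C(\hl_0)$.

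This is exactly the paper's argument, just in different language: the paper observes that every hyperplane separating $\mu$ from the original witness $\hh$ must (by the same reasoning) separate $\alpha$ from $\beta$, and then picks among these finitely many hyperplanes the one closest to $\mu$. Your ``main obstacle'' paragraph circles around this idea but states the wrong dichotomy; once you flip the direction the finiteness of $\alpha\triangle\beta$ gives the minimal element immediately, with no need for DCC on $\mu$ or any further chain argument.
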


\begin{proof}
Consider $\mu\in [\alpha,\gamma]-[\beta,\gamma]$. Since $\mu \not\in [\beta,\gamma]$, there exists $\h$ such that $\h\in \beta,\gamma$ but $\h^*\in\mu$. Since $\mu\in[\alpha,\gamma]$, we cannot have that $\hh$ separates $\mu$ from both $\alpha$ and $\gamma$. Thus, we have that $h^*\in\alpha$. Notice now that by the same reasoning, for any hyperplane $\hk$ separating $\mu$ and $\hh$, we also have that $\hk$ separates $\beta$ and $\gamma$ from $\mu$ and $\alpha$.

Now there are only finitely many hyperplanes separating $\alpha$ and $\beta$, so choose $\hk$ to be such a hyperplane that is closest to $\mu$. We then have that $\mu\in C(\hk)$, as required. 
\end{proof}

\subsection{Spheres, balls and invariance of basepoint}
From now on in this section, we will let $I$ denote an interval.
We use the following notation for balls and spheres in $I$.

$$B_v(I,r) = \text{ the ball of radius r about v in I}$$ 
$$S_v(I,r)=\text{ the sphere of radius r about v in I}$$
We also use $\abs{\cdot}$ to denote the number of vertices in subset of $I$.

\begin{proposition}
\label{FolnerBalls}
Suppose that $I$ is an interval and $I=[v,\alpha]$, where $v$ is a vertex of $I$. Then

$$\lim_{r\to\infty} \frac{\abs{S_v(I,r)}}{\abs{B_v(I,r)}}=0$$
\end{proposition}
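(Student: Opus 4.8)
The plan is to exploit the embedding theorem of \cite{BC+}: the interval $I=[v,\alpha]$ embeds $\ell_1$-isometrically into the standard cubulation of $\RR^n$, where $n=\dim X$ (so that $I$ carries a median-preserving, $\ell_1$-isometric embedding into $\ZZ^n$). Under this embedding the ball $B_v(I,r)$ is the intersection of $I$ with the $\ell_1$-ball of radius $r$ around the image of $v$, and the sphere $S_v(I,r)$ is the intersection with the $\ell_1$-sphere. Because $\alpha$ is nonterminating, the interval $I$ is unbounded, and moreover $I$ is a ``directed'' set: there are vertices of $I$ arbitrarily far from $v$, so $B_v(I,r)$ is strictly increasing in $r$. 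The whole statement is then a statement about subsets of $\ZZ^n$ that are intervals (order intervals/median-convex sets) and are infinite, reaching out to infinity.

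First I would reduce to the following combinatorial fact in $\ZZ^n$ (or in a cube complex embeddable there): if $I$ is an infinite interval and $v\in I$, then $|S_v(I,r)|/|B_v(I,r)|\to 0$. The key mechanism is a ``pushing toward infinity'' map. Since $\alpha$ is nonterminating, pick a vertex $w\in I$ with $d(v,w)\geq 1$ lying ``beyond'' the current sphere — more precisely, using that $\alpha$ is nonterminating one produces, for each vertex $u\in S_v(I,r)$, an edge of $I$ emanating from $u$ that strictly increases the distance to $v$ (take a halfspace $\h\in\alpha$ minimal among those separating $u$ from $\alpha$ but not containing... — the nonterminating property guarantees a strictly smaller halfspace, hence a hyperplane of $I$ crossing exactly once past $u$). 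This gives an injection (or a bounded-to-one map) from a large portion of $S_v(I,r)$ into $S_v(I,r+1)$, and by iterating, from $S_v(I,r)$ into $B_v(I,r+k)\setminus B_v(I,r)$ for each $k$. Combined with the fact that $I$ sits inside $\ZZ^n$ — so $|B_v(I,r+k)|\leq |B_v(I,r)|\cdot(1+ C k/r)^{n}$ or some polynomial comparison, i.e. the ball volumes cannot grow faster than polynomially of degree $n$ — one gets that $|S_v(I,r)|$ is forced to be $o(|B_v(I,r)|)$: if $|S_v(I,r_j)|\geq \epsilon|B_v(I,r_j)|$ along a subsequence, then iterating the push-forward $m$ times produces $\geq m\epsilon |B_v(I,r_j)|$ new vertices inside $B_v(I, r_j+Cm)$, contradicting polynomial volume growth once $m$ is large relative to the growth exponent.

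The cleanest way to organize this: let $a_r=|B_v(I,r)|$, a nondecreasing sequence tending to $\infty$, and $s_r=|S_v(I,r)|=a_r-a_{r-1}$. From the $\ZZ^n$-embedding, $a_{2r}\leq C_n\, a_r$ for a dimensional constant $C_n$ (doubling of $\ell_1$-balls intersected with a median-convex set — here one uses that a median-convex subset of $\ZZ^n$ intersected with an $\ell_1$-ball is ``large'' in a controlled way; alternatively use that $a_r\le \binom{n+r}{n}\cdot 2^n$ grows polynomially). A nondecreasing unbounded sequence with polynomially bounded growth, say $a_r\leq P(r)$ with $\deg P=n$, automatically satisfies $\liminf (a_r-a_{r-1})/a_r=0$; but we need $\lim$, not $\liminf$. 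This is where the monotonicity/pushing argument is essential: the nonterminating hypothesis shows $s_{r+1}\geq s_r - (\text{bounded correction})$ is \emph{not} quite what we want — rather it shows that a definite fraction of $S_v(I,r)$ ``survives'' to contribute to $S_v(I,r')$ for all $r'>r$ up to a controlled loss, so $s_r$ cannot be large without forcing $a_{r'}$ to be much larger than $a_r$ for all nearby $r'$, again contradicting polynomial growth.

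\textbf{Main obstacle.} The hard part is making the ``pushing toward infinity'' map precise and controlling its multiplicity, i.e. showing that from each boundary vertex one gets a genuinely new vertex one step further out \emph{in $I$} (not just in $X$), using exactly the nonterminating property of $\alpha$ — and that this map, after $k$ iterations, lands in the annulus $B_v(I,r+k)\setminus B_v(I,r)$ with bounded fibers. The Euclidean embedding of $\cite{BC+}$ is what tames the geometry: in $\ZZ^n$, ``one step further from $v$'' means increasing one coordinate in the direction of $\alpha$, and $I$ being an order interval toward the (possibly ideal) point $\alpha$ guarantees such a step stays in $I$. I expect the bound on fibers and the polynomial volume comparison to be the two technical points to nail down; the remainder is the elementary lemma that a monotone unbounded polynomially-bounded sequence with the stated persistence property has vanishing successive-difference ratio.
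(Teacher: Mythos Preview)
Your plan correctly identifies the two relevant structural facts---the $\ell_1$-isometric embedding of $I$ into $\ZZ^D$ from \cite{BC+} and the resulting polynomial bound $|B_v(I,r)|\le C r^D$---but the ``pushing toward infinity'' mechanism does not close the gap between $\liminf$ and $\lim$, and you yourself flag this as the main obstacle without resolving it.

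The push map you describe does exist: for any $u\in S_v(I,r)$ there is a hyperplane separating $u$ from $\alpha$ and adjacent to $u$, and crossing it lands in $S_v(I,r+1)\cap I$. However this map is \emph{not} injective in general, and ``bounded-to-one'' (fibers of size at most $D$) is genuinely too weak for your counting argument. Here is a small explicit example. Take $I$ to be the subcomplex of $\ZZ^2$ with vertex set $\{(0,0),(1,0),(0,1),(1,1)\}\cup\{(k,1):k\ge 2\}$; this is the interval $[v,\alpha]$ with $v=(0,0)$ and $\alpha$ the (nonterminating) ultrafilter at ``$(\infty,1)$''. Then $|S_v(I,1)|=2$ while $|S_v(I,2)|=1$: the push sends both $(1,0)$ and $(0,1)$ to $(1,1)$. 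So $|S_r|$ is not monotone, and the inequality your argument needs---namely $|S_{r+k}|\ge |S_r|$ for all $k\ge 0$, so that iterating produces at least $m\epsilon|B_r|$ new vertices in the annulus---fails. The weaker inequality $|S_{r+1}|\ge |S_r|/D$ that bounded fibers actually give only yields $\sum_{k\ge 1}|S_{r+k}|\ge |S_r|/(D-1)$, a fixed multiplicative gain, which produces no contradiction with polynomial growth. One can arrange such $2\!\to\!1$ collapses to recur along an infinite set of radii, so the non-injectivity is not a transient phenomenon absorbable into constants.

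The paper takes a different route that sidesteps this issue entirely: it proves the sharper uniform statement (the threshold depends only on $D$ and $\epsilon$, not on $I$) by induction on $D$. One writes $\RR^D=\RR^{D-1}\times\RR$ so that at least $R/D$ ``horizontal'' hyperplanes $L_1,L_2,\dots$ meet $B_R$; each $L_n$ is itself an interval of dimension $<D$, and $S_R\cap L_n$, $B_R\cap L_n$ are a sphere and ball in $L_n$ of some radius $d_{n,R}$. By induction, whenever $d_{n,R}$ exceeds the inductive threshold $R_1=R(D-1,\epsilon/2)$ one has $|S_R\cap L_n|<\tfrac{\epsilon}{2}|B_R\cap L_n|$; the remaining layers with $d_{n,R}\le R_1$ are few (at most $R_1$ of them, by a simple monotonicity of $d_{n,R}$ in $n$) and each contributes a uniformly bounded number of sphere points. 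Summing over layers gives the result. This inductive layered decomposition is what replaces your pushing map and is precisely what upgrades $\liminf$ to $\lim$.

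A minor side remark: the proposition as stated does not assume $\alpha$ is nonterminating. Your reduction to the unbounded case is fine, but note that even for unbounded $I$ the ultrafilter $\alpha$ need not be nonterminating; the paper's inductive argument handles all intervals uniformly.
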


In fact, we shall prove the following more general and sharper statement

\begin{lemma}\label{uniformity}
Let $D$ be a natural number and $\epsilon>0$ a real number. Then there exists a number $R(\epsilon, D)$ such that for any interval of dimension less than or equal to $D$ , we have $\frac{\abs{S_v(I,r)}}{\abs{B_v(I,r)}}<\epsilon$ for all $r>R(D,\epsilon)$.
\end{lemma}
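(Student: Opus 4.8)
The statement to prove is Lemma~\ref{uniformity}, a uniform Folner estimate for spheres inside balls in intervals of bounded dimension. Since intervals embed $\ell_1$-isometrically into a standard Euclidean cube complex in $\RR^n$ with $n\le D$ (the cited theorem of \cite{BC+}), I would first reduce the problem to a statement about subsets of $\ZZ^D$ (or $[0,1]^D$-cube complexes). The point is that $I=[v,\alpha]$ sits inside $\ZZ^D$ (after embedding) with $v$ corresponding to the origin, and the $\ell_1$-distance is preserved, so balls and spheres in $I$ are exactly the intersections with $I$ of the $\ell_1$-balls and spheres in $\ZZ^D$.

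\textbf{Key steps.} First I would set up the embedding and record the crucial structural fact about an interval $[v,\alpha]$: it is a ``staircase-convex'' region — more precisely, for each coordinate direction $i$, the halfspaces of $X$ corresponding to that direction that lie in $\alpha$ form a nested (possibly infinite) chain, and the interval is characterized by $\alpha_v\cap\alpha\subset\gamma$ (Lemma~\ref{IntervalDescription}); geometrically this means $I$ is a down-set / order-convex subset of $\ZZ^D$ with respect to the partial order pointing ``toward $\alpha$''. The second step is the combinatorial heart: show that for an order-convex subset $I$ of $\ZZ^D$ containing the origin, with $|\cdot|$ counting lattice points, $|S_v(I,r)|/|B_v(I,r)|\to 0$ \emph{uniformly} over all such $I$, where uniformity depends only on $D$ and $\epsilon$. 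The natural approach here is a doubling/comparison argument: one shows $|B_v(I,r)|$ grows at least polynomially enough relative to $|S_v(I,r)|$, or more robustly, that $\sum_{r=0}^{N}|S_v(I,r)| = |B_v(I,N)|$ forces the ratio to be small for most $r$, and then one needs to upgrade ``most $r$'' to ``all large $r$'' using monotonicity properties. The key monotonicity is that in an order-convex region, $|S_v(I,r)|$ cannot suddenly drop and then jump back up too wildly — a vertex at distance $r+1$ has a neighbor at distance $r$ in $I$ (since $I$ is connected and geodesically convex, being an interval), so $|S_v(I,r+1)|\le D\cdot|S_v(I,r)|$. Combined with $\sum_r |S_v(I,r)|$ being finite or the Cesàro-type bound from the telescoping sum, this gives the uniform bound.

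\textbf{Main obstacle.} The hard part will be obtaining \emph{uniformity} in $I$ — getting $R(D,\epsilon)$ independent of which interval we chose. A single interval could a priori be very ``thin'' (e.g. quasi-isometric to a ray), in which case spheres and balls have comparable size and the ratio does \emph{not} go to zero — so the statement as phrased must be using that $\alpha$ is nonterminating (inherited from Proposition~\ref{FolnerBalls}), or there is an implicit hypothesis that makes thin intervals behave well. I expect the resolution is that one does \emph{not} claim a clean limit for every fixed $I$ uniformly, but rather exploits that the bad intervals (those with a bounded ``width'' in some directions) are effectively lower-dimensional, so one inducts on $D$: split $I$ according to whether it is ``$R$-thin'' in some coordinate (handled by the inductive hypothesis in dimension $D-1$, thickened by a bounded factor) or ``$R$-thick'' in all coordinates (where genuine volume growth gives the Folner property with a uniform rate). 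Carefully organizing this dichotomy — quantifying ``thin'' versus ``thick'' and bookkeeping the bounded multiplicative errors from thickening — is where the real work lies; the base case $D=1$ (intervals in a line, i.e. $[v,\alpha]$ a ray or segment) must be checked directly and will reveal exactly which hypothesis on $\alpha$ is doing the work.
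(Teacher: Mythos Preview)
Your overall instinct --- embed the interval in $\ZZ^D$ and induct on $D$ --- is the same as the paper's. But two things go wrong in your plan, one conceptual and one technical.

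\textbf{The conceptual slip.} Your worry that ``thin'' intervals (rays, strips) make the ratio stay bounded away from zero, and hence that a nonterminating hypothesis on $\alpha$ is needed, is backwards. In dimension one an interval is a segment or a ray; in either case $\abs{S_v(I,r)}\le 1$, so the ratio is at most $1/(r+1)$. More generally a thin strip $\{0,\dots,N\}\times\ZZ_{\ge 0}$ has $\abs{S(r)}\le N+1$ while $\abs{B(r)}$ grows linearly, so the ratio still decays like $1/r$. Thin intervals are the \emph{easy} case, not the obstruction; the lemma holds for all intervals with no nonterminating assumption, and the base case $D=1$ already displays this. So your proposed thin/thick dichotomy is solving a non-problem.

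\textbf{The technical gap.} Your suggested tools --- the Ces\`aro identity $\sum_{s\le r}\abs{S(s)}=\abs{B(r)}$ together with $\abs{S(r+1)}\le D\abs{S(r)}$ --- show that the ratio cannot stay above $\epsilon$ forever (else $\abs{B(r)}$ would grow exponentially, contradicting the polynomial bound from the embedding in $\ZZ^D$). But they only give \emph{some} $r\le R$ with small ratio, not \emph{all} $r>R$: the one-sided bound $\abs{S(r+1)}\le D\abs{S(r)}$ lets the ratio jump up by a factor of $D$ at the next step, so you cannot propagate smallness forward. You would need a complementary monotonicity, and none is evident from your setup.

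The paper's inductive step supplies exactly the missing structure. After embedding in $\RR^D$, one picks a vertical direction in which the ball of radius $r$ is deep (at least $r/D$ horizontal hyperplanes $L_1,L_2,\dots$) and slices $B(r)$ and $S(r)$ into their intersections $B_{n,r}$, $S_{n,r}$ with each $L_n$. The key observation is that each $B_{n,r}$ is itself a ball in the $(D-1)$-dimensional interval $L_n$, of radius $d_{n,r}$, and these radii are \emph{strictly decreasing}: $d_{m,r}\le d_{n,r}-(m-n)$ for $m>n$. Hence all but at most $R_1=R(D-1,\epsilon/2)$ of the layers have $d_{n,r}>R_1$ and are handled by the inductive hypothesis, while the remaining layers contribute at most a fixed constant $M=M(D,\epsilon)$ points to $S(r)$. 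Choosing $r$ large enough that the number $N$ of layers exceeds $2M/\epsilon$ finishes the estimate. It is this layer-by-layer monotonicity of radii --- not a global thin/thick split of the interval --- that makes the induction close.
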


The difference between the proposition and the theorem is that the $R$ chosen for the $\epsilon$ does not depend on the complex, just on the dimension and $\epsilon$. 

\begin{proof}
Since we will always be working with the same vertex $v$ in this lemma, we shall write $S(r)=S_v(I,r)$ and $B(r)=B_v(I,r)$. 

We proceed by induction on the dimension of the complex. For dimension 1, the theorem is clear since $\abs{S(r)}=1$ for all $r$, so  $\frac{\abs{S(r)}}{\abs{B(r)}}$ is either $0$ or $1/r$. 
So we assume that the theorem is true for complexes of dimension less than $D$. In particular we assume the proposition holds for hyperplanes in $I$. 

We assume that $\epsilon$ is given. We make some choices of numbers now that we need for later.  By induction, there exists an $R_1=R(D-1,\epsilon/2)$,  so that for any complex of dimension less than $D$ and any $r>R_1$,  $\frac{\abs{S_r}}{\abs{B_r}}<\epsilon /2$. Let $S_{max}$ denote the maximal number of points in the sphere of radius $R_1$ in a complex of dimension less than $D$, and let $M=R_1\cdot S_{max}$. (The number $S_{max}$ exists because there are only finitely many such possible spheres.)

We choose $R>{\text Max}\{R_1, 2DM/\eps\}$. We claim that $R$ is our required $R(D,\eps)$.

We note that $\RR^D$ can be naturally factorized as a product $\RR^D=\RR^{D-1}\times\RR$ in $D$ different ways. Now we consider our embedding of $I$ into $\RR^D$. We assume that the sphere of radius $R$ is non-empty, for otherwise, the proposition is clearly true. If we take a vertex $w\in I$ such that $d(v,w)=R$, then for one of the projections $\pi: \RR^D\to \RR$ has the property that $d(\pi(v),\pi(w))>R/D>2M/\eps$ (recall that we are using the $\ell_1$ metric, which is the same as the 1-skeleton metric on the vertices of $I$). We factorize $\RR^D=\RR^{D-1}\times\RR$ so that the projection onto the second factor satisfies the above: some point in $S_R$ projects onto a point at distance at least  $2M/\eps$ from $v$. We call the second factor the \emph{vertical} direction and the hyperplanes transverse to them \emph{horizontal} hyperplanes, so that we have at least $2M/\eps$ horizontal hyperplanes intersecting the ball of radius $R$. Let $N$ denote the number of hyperplanes meeting the ball of radius $R$.

We let $L_n$ denote the $n$'th horizontal hyperplane (counting from the bottom). We let $\pi_n:I\to L_n$ denote the projection of $I$ onto $L_n$. Note that $L_n=\pi_n(I)$ is an interval, namely
$L_n=[\pi_n(v),\pi_n(\alpha)]$. Consequently, $B_{r,n}=B_r\cap L_n$ is a ball; we let $d_{n,r}$ denote the radius of this ball. We denote $S_{r,n}=S_r\cap L_n$.  

\paragraph{\bf Observation} Note that for any $n,r$ and $m>n$, we have $d_{m,r}\leq d(n,r)-(m-n)$.

Let $L_k$ be the first hyperplane such that $L_k\cap S_R\not =\emptyset$ and $d_{k,R}\leq R_1$.
By the observation above, we have at most $R_1$ hyperplanes above $L_k$ which intersect the ball of radius $R$. Moreover, for each of these, we have that the radius $d_{n,r}\leq R_1$. Thus
there are at most $M$ points in the spheres $S_{n,R}$ for $n\geq k$. For all $n<k$, we have 
that $S_{n,R}$ is empty or $d(k,R)>R_1$. In either case,  for all $n<k$, we have that $\abs{S_{n,R}}<\frac{\epsilon}{2} \abs{B_{n,R}}$.
We now look at the ratio we are considering and break it up into layers.

$$\frac{\abs{S_R}}{\abs{B_R}}=\frac{ \sum_{i=1}^k \abs{S_{i,R}}+ \sum_{i=k+1}^N \abs{S_{i,R}} }{\sum_{i=1}^N\abs{B_{i,R}}}<
\frac{\frac{\epsilon}{2}\sum_{i=1}^k\abs{B_{i,R}}}{\sum_{i=1}^N\abs{B_{i,R}}}+\frac{M}{N}<\eps/2+\eps/2 =\eps
$$

\end{proof}

\begin{corollary}
\label{BasePointIndependence}
For any two vertices $v$ and $w$ and for any $n\in\ZZ$, we have 

$$\lim_{r\to\infty} \frac{B_v(I,r+n)}{B_w(I,r)}\to 1$$
\end{corollary}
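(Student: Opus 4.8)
The statement to prove is Corollary~\ref{BasePointIndependence}: for any two vertices $v,w$ in an interval $I$ and any $n\in\ZZ$, one has $\abs{B_v(I,r+n)}/\abs{B_w(I,r)}\to 1$ as $r\to\infty$. The plan is to derive this purely formally from the ``vanishing sphere ratio'' statement of Proposition~\ref{FolnerBalls} (equivalently Lemma~\ref{uniformity}), together with the triangle inequality between the two basepoints. First I would reduce to the case $w=v$ and $n=0$, showing $\abs{B_v(I,r+1)}/\abs{B_v(I,r)}\to 1$: indeed $\abs{B_v(I,r+1)}=\abs{B_v(I,r)}+\abs{S_v(I,r+1)}$, so the ratio is $1+\abs{S_v(I,r+1)}/\abs{B_v(I,r)}$, and the second term tends to $0$ by Proposition~\ref{FolnerBalls} (after noting $\abs{S_v(I,r+1)}/\abs{B_v(I,r)}\le \abs{S_v(I,r+1)}/\abs{B_v(I,r+1)}\cdot \abs{B_v(I,r+1)}/\abs{B_v(I,r)}$, or more simply that $\abs{S_v(I,r+1)}=\abs{B_v(I,r+1)}-\abs{B_v(I,r)}$ and if this didn't go to zero relative to $\abs{B_v(I,r)}$ the sphere-to-ball ratio would not vanish). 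Iterating $\abs{n}$ times gives $\abs{B_v(I,r+n)}/\abs{B_v(I,r)}\to 1$ for every fixed $n\in\ZZ$.

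Next I would handle the change of basepoint from $v$ to $w$. Let $k=d(v,w)$ (finite, since $v,w$ lie in the interval $I$ and $I$ has finite dimension, but in any case $v,w$ are vertices joined by an edge path). By the triangle inequality, $B_w(I,r-k)\subseteq B_v(I,r)\subseteq B_w(I,r+k)$, hence
$$\frac{\abs{B_w(I,r-k)}}{\abs{B_w(I,r)}}\ \le\ \frac{\abs{B_v(I,r)}}{\abs{B_w(I,r)}}\ \le\ \frac{\abs{B_w(I,r+k)}}{\abs{B_w(I,r)}}.$$
By the first part (applied with basepoint $w$ and shifts $-k$ and $+k$), both the left and right sides tend to $1$, so $\abs{B_v(I,r)}/\abs{B_w(I,r)}\to 1$ by squeezing. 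Finally, combining this with $\abs{B_v(I,r+n)}/\abs{B_v(I,r)}\to 1$ gives the claimed statement $\abs{B_v(I,r+n)}/\abs{B_w(I,r)}\to 1$.

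\textbf{Main obstacle.} There is no serious obstacle here: the whole content has been front-loaded into Proposition~\ref{FolnerBalls} / Lemma~\ref{uniformity}, and what remains is a routine squeeze argument. The only point requiring a word of care is the very first reduction: one must make sure that $\abs{S_v(I,r)}/\abs{B_v(I,r)}\to 0$ genuinely implies $\abs{B_v(I,r+1)}/\abs{B_v(I,r)}\to 1$ rather than the reverse direction being needed. This is immediate from $\abs{B_v(I,r+1)}-\abs{B_v(I,r)}=\abs{S_v(I,r+1)}$ once one observes that $\abs{S_v(I,r+1)}\le \abs{B_v(I,r+1)}$ would only give a bound involving $\abs{B_v(I,r+1)}$ in the denominator; to get $\abs{B_v(I,r)}$ in the denominator one uses instead that $S_v(I,r+1)\subseteq B_v(I,r+1)\setminus B_v(I,r)$ so that $\abs{S_v(I,r+1)}/\abs{B_v(I,r)}\to 0$ is exactly equivalent to $\abs{B_v(I,r+1)}/\abs{B_v(I,r)}\to 1$, and the former follows from Proposition~\ref{FolnerBalls} because $\abs{B_v(I,r)}\ge \abs{B_v(I,r)}$ while $\abs{S_v(I,r+1)} = \abs{S_v(I,r+1)}$ is $o(\abs{B_v(I,r+1)}) = o(\abs{B_v(I,r)}(1+o(1)))$; a clean way to package this is to note $\abs{B_v(I,r+1)}\le \abs{B_v(I,r)} + \abs{S_v(I,r+1)} \le \abs{B_v(I,r)} + \abs{S_v(I,r+1)}$ and apply Proposition~\ref{FolnerBalls} at radius $r+1$ after first establishing that $\abs{B_v(I,r+1)}/\abs{B_v(I,r)}$ is bounded, which itself follows since $I$ embeds $\ell_1$-isometrically in $\RR^D$ and hence $\abs{B_v(I,r+1)}\le \binom{r+1+D}{D}/\binom{r+D}{D}\cdot\abs{B_v(I,r)}$-type polynomial control holds. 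I would write the argument in the streamlined squeeze form above and relegate this bookkeeping to a single sentence.
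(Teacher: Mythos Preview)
Your proof is correct and follows exactly the same two-step approach as the paper: first use Proposition~\ref{FolnerBalls} to see that additive shifts in the radius do not affect the limit, then use the triangle-inequality squeeze $B_w(I,r-d)\subset B_v(I,r)\subset B_w(I,r+d)$ to change basepoint. The bookkeeping you worry about in your ``main obstacle'' is unnecessary: from $\abs{B_v(I,r+1)}=\abs{B_v(I,r)}+\abs{S_v(I,r+1)}$ one gets directly $\abs{B_v(I,r+1)}/\abs{B_v(I,r)} = \bigl(1-\abs{S_v(I,r+1)}/\abs{B_v(I,r+1)}\bigr)^{-1}\to 1$, with no appeal to the embedding in $\RR^D$ needed.
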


\begin{proof}

First,  we need to see that additive constants to not affect the limit. This follows immediately from Proposition \ref{FolnerBalls}, since it tells us that

$$\lim_{r\to\infty}\frac{\abs{B_v(I,r)}}{\abs{B_v(I,r+1)}} =1.$$

Secondly, we show independence of basepoint. Let $d=d(v,w)$. Then for any $r>d$, we have 

$$B_w(I,r-d)\subset B_v(I, r)\subset B_w(I, r+d)$$
We thus have 

$$\frac{\vert B_w(I,r-d)\vert}{\vert B_w(I,r)}\leq \frac{\vert B_v(I,r)\vert}{\vert B_w(I,r)}\leq \frac{\vert B_w(I, r+d)\vert}{\vert B_w(I,r)\vert}$$

By the lemma, we have the left and right hand sides approach 1, as $r\to\infty$, so this gives 

$$\lim_{r\to\infty} \frac{\abs{B_v(I,r)}}{\abs{B_w(I,r)}}\to 1$$

The corollary follows.
\end{proof}

\subsection{Volume and collapsing} 
We will need to understand something about the size of the portion of the interval taken up by its intersection with a hyperplane.  Given an interval $I$ and $\hh$ a hyperplane in $I$, we say that $\hh$ is {\it meager} if  (for any fixed choice of base vertex as the center of the balls)  
$$\lim_{r\to\infty} \frac{\vert C(\hh)\cap B_v(I,r)\vert }{\vert B_v(I,r)\vert } = 0\,.$$

\begin{remark}
Let us note that by Corollary \ref{BasePointIndependence} the property of a hyperplane being meager is independent of the base point. 
\end{remark}

We now recall the construction called {\it collapsing}, discussed in \S 2.3. 
Since $C(\hh)\cong \hh\times I$, we may form a quotient of $I$ by collapsing the interval direction of $C(\hh)$ to a point (i.e. via projection onto the $\hh$ factor). The resulting complex $\overline I$ is  called the complex obtained by {\it eliminating $\hh$}. 
The quotient map $I\twoheadrightarrow\overline I$ is called the {\it collapsing} map for $\hh$.  One may also eliminate a finite number of hyperplanes. It is easy to check that such a collapsing map is well-defined in that it does not depend on the order in which the hyperplanes are collapsed. 

We will need to understand what collapsing does to growth of balls. 

\begin{lemma}
Let $\rho: I\onto  J$  be a collapsing for a single hyperplane and let $v$ be a vertex in $I$. Then for all $r>0$, we have

$$ \frac{\vert B_{\rho(I)}(J,r)\vert }{\vert B_v(I,r)\vert } \geq 1/2.$$ 
\end{lemma}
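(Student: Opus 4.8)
The plan is to compare balls in $I$ with balls in the collapsed complex $J$ by analyzing the fibers of the collapsing map $\rho$. Since $C(\hh)\cong \hh\times[0,1]$ as a subcomplex of $I$ (with $I$ an interval, the collapsed direction has combinatorial length exactly one), and $\rho$ is the projection onto the $\hh$-factor followed by the obvious identification, the map $\rho$ is at most two-to-one: a vertex $\bar u$ of $J$ has preimage either a single vertex of $I$ (if that vertex lies off the carrier, or the carrier has ``width one'' over $\bar u$) or a pair of vertices $u_0, u_1$ of $I$, one on each side of $\hh$, with $d_I(u_0,u_1)=1$. In all cases $|\rho^{-1}(\bar u)\cap I^{(0)}|\le 2$.

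Next I would track how distances behave. Fix $v\in I$ and set $\bar v = \rho(v)$. First observe $\rho$ is $1$-Lipschitz on vertices: crossing a hyperplane $\hk\ne\hh$ in $I$ corresponds to crossing its image in $J$, and crossing $\hh$ itself maps to no move at all; hence $d_J(\rho(u),\bar v)\le d_I(u,v)$, so $\rho(B_v(I,r))\subseteq B_{\bar v}(J,r)$. Conversely, given $\bar u\in B_{\bar v}(J,r)$, I want to exhibit a preimage vertex in $B_v(I,r)$. Choose a geodesic in $J$ from $\bar v$ to $\bar u$ and lift it crossing-by-crossing, starting at $v$; each crossing in $J$ lifts to exactly one crossing in $I$ of the corresponding hyperplane, so the lift has the same length and ends at some $u\in\rho^{-1}(\bar u)$ with $d_I(v,u)\le r$. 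Therefore every vertex of $B_{\bar v}(J,r)$ has at least one preimage in $B_v(I,r)$, i.e. $\rho$ maps $B_v(I,r)$ \emph{onto} $B_{\bar v}(J,r)$.

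Combining the two facts: the restriction $\rho\colon B_v(I,r)^{(0)}\to B_{\bar v}(J,r)^{(0)}$ is a surjection with fibers of size at most $2$, so $|B_v(I,r)| \le 2\,|B_{\bar v}(J,r)|$, which rearranges to $|B_{\bar v}(J,r)|/|B_v(I,r)|\ge 1/2$. Finally, the statement is phrased with $B_{\rho(I)}(J,r)$, i.e. the ball centered at $\rho(v)=\bar v$, so this is exactly the claimed inequality, and it holds for every $r>0$ (indeed every $r\ge 0$) with no limiting argument needed.

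The only real point requiring care is the lifting step — making sure a geodesic in $J$ genuinely lifts to a path of the \emph{same} length in $I$, i.e. that no crossing is ``lost'' except the ones through $\hh$, and that the lifted path stays within the interval $I$ (which it does, since lifting crossing-by-crossing from $v$ produces vertices whose ultrafilters differ from $\alpha_v$ in finitely many coordinates consistent with membership in $I$). Once that is in hand the rest is the elementary ``surjection with bounded fibers'' counting argument, so I expect this step to be the main, though modest, obstacle.
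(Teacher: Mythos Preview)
Your core ingredients are exactly the paper's: $\rho$ is distance non-increasing, so $\rho(B_v(I,r))\subset B_{\rho(v)}(J,r)$, and $\rho$ is at most two-to-one. These two facts alone already give
\[
|B_v(I,r)| \le 2\,|\rho(B_v(I,r))| \le 2\,|B_{\rho(v)}(J,r)|,
\]
which is the desired inequality. The paper's proof stops here.

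Your additional surjectivity claim --- that $\rho$ maps $B_v(I,r)$ \emph{onto} $B_{\rho(v)}(J,r)$ --- is not needed, and in fact it is false. Take $I$ to be the path on three vertices $0,1,2$ with hyperplanes $\hh$ (between $0$ and $1$) and $\hk$ (between $1$ and $2$); collapse $\hh$. With $v=0$ and $r=1$ one has $B_v(I,1)=\{0,1\}$, whose image under $\rho$ is the single vertex $\{\rho(0)\}$, whereas $B_{\rho(v)}(J,1)$ is all of $J$ (two vertices). Your lifting argument breaks down precisely where you flagged it: the geodesic in $J$ crossing $\hk$ cannot be lifted to a path of the same length starting at $0$, because $\hh$ separates $0$ from $\hk$ in $I$. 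The ``crossing-by-crossing'' lift can be obstructed by the collapsed hyperplane sitting between the current vertex and the next hyperplane to be crossed.

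So: drop the surjectivity paragraph entirely, and your proof is correct and coincides with the paper's.
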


\begin{proof}
First we observe that since the distance between vertices simply counts the number of hyperplanes which separate them, we have that $\rho$ is distance non-increasing, so that 

$$ \rho(B_v(I, r) )\subset B_{\rho(v)}(J, r)$$  
Now we note that the collapsing map is at most 2-to-1, so that $$2\vert B_{\rho(v)}(J,r)\vert \geq \vert B_v(I,r)\vert$$ 

It follows that 

$$\frac{\vert B_{\rho(v)}(J,r)\vert }{\vert B_v(I,r)\vert }\geq 1/2$$
\end{proof}

\begin{lemma}
Let $\rho: I \onto J$ denote the collapsing map eliminating a meager hyperplane $\hh$.
Then 
$$\lim_{r\to\infty} \frac{\vert B_{\rho(v)}(J,r)\vert }{\vert B_v(I,r)\vert} = 1$$ 
\end{lemma}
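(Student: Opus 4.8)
The plan is to leverage the previous lemma, which already gives the lower bound $\frac{\abs{B_{\rho(v)}(J,r)}}{\abs{B_v(I,r)}}\geq 1/2$, and show that in the limit this ratio actually tends to $1$. The obstruction to equality is exactly the set of vertices of $I$ that get identified in pairs under $\rho$, and these lie in the carrier $C(\hh)$: the collapsing map $\rho\colon C(\hh)\cong \hh\times[0,1]\to \hh\times\{0\}$ is two-to-one precisely on the carrier and injective off it. So the first step is to make this precise: for each $r$, write $\abs{B_{\rho(v)}(J,r)}$ in terms of $\abs{B_v(I,r)}$ minus a correction term controlled by $\abs{C(\hh)\cap B_v(I,r)}$ (up to a bounded additive shift in the radius coming from the fact that $\rho$ is distance non-increasing but can decrease distances by at most $1$ on collapsed pairs).

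Concretely, I would argue as follows. Since $\rho$ is distance non-increasing and at most two-to-one, and the only vertices with a non-trivial fiber lie in $C(\hh)$, we get
$$\abs{B_v(I,r)} - \abs{C(\hh)\cap B_v(I,r)} \leq \abs{B_{\rho(v)}(J,r)} \leq \abs{B_v(I,r)}.$$
The left inequality holds because every vertex of $B_v(I,r)$ not in $C(\hh)$ maps injectively into $B_{\rho(v)}(J,r)$ (using $\rho(B_v(I,r))\subset B_{\rho(v)}(J,r)$), so the image has size at least $\abs{B_v(I,r)} - \abs{C(\hh)\cap B_v(I,r)}$; the right inequality is immediate from $\rho$ being distance non-increasing and injective away from the carrier together with a counting argument, or simply from the trivial bound that $\rho$ maps $B_v(I,r)$ onto a subset of $B_{\rho(v)}(J,r)$ together with the reverse inclusion $B_{\rho(v)}(J,r) \subset \rho(B_v(I,r+1))$; in any case one only needs that $\abs{B_{\rho(v)}(J,r)} \leq \abs{B_v(I,r')}$ for $r' = r$ or $r+1$, which costs nothing in the limit by Corollary \ref{BasePointIndependence} (additive constants do not affect the limiting ratio).

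Dividing through by $\abs{B_v(I,r)}$ gives
$$1 - \frac{\abs{C(\hh)\cap B_v(I,r)}}{\abs{B_v(I,r)}} \leq \frac{\abs{B_{\rho(v)}(J,r)}}{\abs{B_v(I,r)}} \leq 1 + o(1),$$
and now the definition of $\hh$ being \emph{meager} says precisely that $\frac{\abs{C(\hh)\cap B_v(I,r)}}{\abs{B_v(I,r)}}\to 0$. Taking $r\to\infty$ and applying the squeeze yields the claim. The main point to be careful about — and the only genuinely non-trivial step — is bookkeeping the additive shift in the radius so that the comparison between $B_{\rho(v)}(J,r)$ and $B_v(I,r)$ is honest; this is handled cleanly by Corollary \ref{BasePointIndependence}, which guarantees $\abs{B_v(I,r+n)}/\abs{B_v(I,r)}\to 1$ for any fixed $n$, so one may freely absorb bounded shifts. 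Everything else is the elementary fiber-counting for the at-most-two-to-one collapsing map already recorded in the preceding lemma.
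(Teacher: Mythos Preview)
Your argument is correct and follows essentially the same route as the paper: both obtain the lower bound by noting that $\rho$ is distance non-increasing and injective off $C(\hh)$, so that $\abs{B_{\rho(v)}(J,r)}\geq \abs{B_v(I,r)\setminus C(\hh)}$, and then invoke the meager hypothesis. The paper in fact writes out only this lower bound and declares the lemma proved; your explicit handling of the upper bound via $B_{\rho(v)}(J,r)\subset\rho(B_v(I,r+1))$ together with Corollary~\ref{BasePointIndependence} is a welcome clarification rather than a different approach.
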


\begin{proof}

As in the previous lemma, we have $\rho(B_v(I,r))\subset B_{\rho(v)}(J,r)$. Also, note that $\rho$ is 1-1 on the complement of $C(\hh)$. Thus, we obtain: 

$$\frac{\vert B_{\rho(v)}(J,r) \vert}{\vert B_v(I,r)\vert} \geq \frac{\vert \rho(B_v(I,r))\vert}{\vert B_v(I,r)\vert}   \geq \frac{\vert \rho\big(B_v(I,r)-C(\hh)\big)\vert}{\vert B_v(I,r)\vert} = \frac{\vert B_v(I,r)-C(\hh)\vert}{\vert B_v(I,r)\vert} $$

By the definition of meager, we have that $$\lim_{r\to\infty} \frac{\vert B_v(I,r)-C(\hh)\vert}{\vert B_v(I,r)\vert} = 1$$ The lemma follows. 
\end{proof}

\begin{corollary}\label{FiniteCollapse}
Let $I=[v,\alpha]$ be an interval with no more than $n$ non-meager hyperplanes. Then for any collapsing map of finitely many hyperplanes $\rho:I\onto J$, we have 
$$\liminf_{r\to\infty} \frac{\abs{B_{\rho(v)}(J,r)}}{\abs{B_v(I,r)}}\geq 1/2^n$$

\end{corollary}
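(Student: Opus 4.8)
The plan is to prove Corollary \ref{FiniteCollapse} by induction on the number of hyperplanes being collapsed by $\rho$, combining the two preceding lemmas in a bookkeeping argument. The key structural observation is that among the (finitely many) hyperplanes being collapsed, at most $n$ of them are non-meager, and each such collapse costs at most a factor of $1/2$ by the first of the two collapsing lemmas, while each meager collapse costs asymptotically nothing by the second collapsing lemma.

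Concretely, I would factor $\rho$ as a composition $\rho = \rho_k \circ \cdots \circ \rho_1$ where each $\rho_i$ collapses a single hyperplane; this is legitimate since collapsing is order-independent (as noted in \S2.3 and \S6.3). Order the factorization so that all the non-meager hyperplanes are collapsed first, say $\rho_1, \ldots, \rho_m$ with $m \le n$, followed by the meager ones $\rho_{m+1}, \ldots, \rho_k$. \textbf{Caveat I would need to address:} being meager or non-meager depends on the ambient complex, so after collapsing some hyperplanes a previously meager hyperplane could in principle change status in the quotient. I would handle this by noting that collapsing only removes a carrier and is $2$-to-$1$, so it cannot make a meager hyperplane non-meager in the quotient — more carefully, if $\hh$ is meager in $I$ then its carrier has density zero in $I$, and under a collapsing map $\sigma: I \onto I'$ one has $|C_{I'}(\sigma\hh) \cap B(r)| \le 2|C_I(\hh)\cap B(r+c)|$ for a bounded constant, while $|B_{I'}(r)| \ge \frac12 |B_I(r)|$ (or $\sim |B_I(r)|$ in the meager case), so density zero is preserved. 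Thus "meager" is stable under the collapses, and the count of non-meager hyperplanes among those collapsed never exceeds $n$.

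With that in hand, set $J_0 = I$ and $J_i = \rho_i(J_{i-1})$, so $J_k = J$. For $i \le m$ (the non-meager collapses), the first collapsing lemma gives
$$\frac{|B_{v_i}(J_i,r)|}{|B_{v_{i-1}}(J_{i-1},r)|} \ge \frac12 \quad \text{for all } r,$$
where $v_i$ denotes the image of $v$. For $i > m$ (the meager collapses), the second collapsing lemma gives
$$\lim_{r\to\infty} \frac{|B_{v_i}(J_i,r)|}{|B_{v_{i-1}}(J_{i-1},r)|} = 1.$$
Multiplying these ratios telescopically, $\frac{|B_{\rho(v)}(J,r)|}{|B_v(I,r)|} = \prod_{i=1}^k \frac{|B_{v_i}(J_i,r)|}{|B_{v_{i-1}}(J_{i-1},r)|}$, and taking $\liminf_{r\to\infty}$: the first $m \le n$ factors each contribute at least $\frac12$, and the remaining factors each tend to $1$, so the product's liminf is at least $(1/2)^m \ge (1/2)^n$.

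The main obstacle, as flagged above, is verifying that "meager" is preserved under the intermediate collapses so that the non-meager count stays bounded by $n$ throughout the factorization — this is the only non-formal point, and it requires the observations that collapsing is distance-non-increasing, at most $2$-to-$1$, and changes ball sizes by at most a bounded factor, all of which are already available from the preceding lemmas. Everything else is the telescoping bookkeeping, which is routine given the two collapsing lemmas.
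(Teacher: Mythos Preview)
Your argument is correct and is exactly the intended one: the paper states this as a bare corollary of the two preceding lemmas with no proof, and the telescoping product of single-hyperplane collapses (each non-meager one contributing a factor $\tfrac12$, each meager one contributing a factor tending to $1$) is the natural way to assemble them. The stability-of-meagerness point you flagged is indeed the only nontrivial detail; your sketch for it is essentially right (the precise inequality is $|C_{I'}(\sigma\hh)\cap B_{v'}(I',r)|\le |C_I(\hh)\cap B_v(I,r+1)|$ together with $|B_{v'}(I',r)|\ge \tfrac12|B_v(I,r)|$, and then Proposition~\ref{FolnerBalls} absorbs the $+1$ shift), so meagerness persists through each intermediate collapse and the bookkeeping goes through.
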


\subsection{Projections of hyperplanes onto one another}
\label{subembeddings}

As above, we let $I=[v,\alpha]$  is an interval embedded in $R^D$, with $v$ the origin and $\alpha$ an nonterminating ultrafilter. As before, we write $R^D=R^{D-1}\times R$, with the last factor being the vertical direction and the hyperplanes transverse to this direction called horizontal hyperplanes. As before, the hyperplanes are ordered from their distance from the origin and 
are denoted $L_n$. 

As discussed previousely, 
there exists a projection of $I$ to $L_n$ . Restricting, this we get a projection map $\pi_n: L_1\to L_n$. 
We want to get a better handle on this map.

As usual, let $\hat\cH(L_n)$ denote the collection of hyperplanes of the complex $L_n$: these are simply the hyperplanes of $I$ which cross $L_n$. 
Note that any hyperplane $\hh$ which crosses $L_1$ and does not cross $L_n$ must separate $v$ from $L_n$. This is because both $\hh$ and $L_n$ must separate $v$ from $\alpha$. Thus, there are finitely many hyperplanes which cross $L_1$ and do not cross $L_n$. We let $\rho_n: L_1\onto Y_n$ denote the resulting collapsing map. 

Now for any hyperplane $\hh$ in $I$ crossing $L_n$ which does not cross $L_1$, we let $\h$ denote the halfspace containing $L_1$. We let 

$$C_n=\bigcap_{\hh\cap L_1=\emptyset} \h\cap L_n$$

The subcomplex $C_n$ is simply the image $\pi_n(L_1)$.
Note that $C_n$ is itself a convex subcomplex of $L_n$ whose hyperplanes consist of those hyperplanes in $I$ which intersect $L_1$.  Thus $Y_n$ and $C_n$ have the same halfspace system, namely the one coming from all hyperplanes which cross both $L_1$ and $L_n$. We thus have a natural isomorphism $i_n:Y_n\to C_n$.  We see then that $\pi_n=i_n\circ \rho_n$. 


We summarize the above discussion in the following lemma. 

\begin{lemma}
For each $L_n$, the projection map $L_1\to L_n$ factors through a collapsing map $\rho: L_1\onto Y_n$ and an embedding $Y_n\to L_n$. 

\label{SubembeddingHyperplanes} 
\end{lemma}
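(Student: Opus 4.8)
The plan is to verify that the projection map $\pi_n : L_1 \to L_n$ decomposes in the way the proof body already anticipates: collapse away the hyperplanes that are "lost" and then identify the result with a convex subcomplex of $L_n$. First I would recall the setup: $L_1$ and $L_n$ are horizontal hyperplanes of the interval $I = [v,\alpha]$, hence are themselves \cat cube complexes, and $\pi_n$ is the restriction to $L_1$ of the canonical projection $I \to L_n$. The hyperplanes of $L_1$ (resp.\ $L_n$) are exactly the hyperplanes of $I$ that cross $L_1$ (resp.\ $L_n$). So the natural dividing line is between hyperplanes crossing \emph{both} $L_1$ and $L_n$, those crossing $L_1$ but not $L_n$, and those crossing $L_n$ but not $L_1$.

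The key step is the finiteness observation already made in the excerpt: any hyperplane $\hh$ crossing $L_1$ but not $L_n$ must separate $v$ from $L_n$ (since both $\hh$ and $L_n$ separate $v$ from $\alpha$, and $\hh$ does not meet $L_n$), and there are only finitely many such hyperplanes because they all lie in the finite interval between $v$ and a vertex of $L_n$ (using the finite interval condition / local finiteness, and the fact that $[v,w]$ is finite for $w$ a vertex). Hence one may form the collapsing map $\rho_n : L_1 \onto Y_n$ eliminating precisely these finitely many hyperplanes; by the discussion in \S 2.3 the collapse of a finite halfspace set is a well-defined \cat cube complex whose halfspace system is exactly the set of hyperplanes crossing both $L_1$ and $L_n$. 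On the other side, $C_n = \bigcap_{\hh \cap L_1 = \emptyset} \h \cap L_n$ is a convex subcomplex of $L_n$ (an intersection of halfspaces is convex), and its hyperplane set is again exactly the hyperplanes crossing both $L_1$ and $L_n$ — the hyperplanes crossing $L_n$ only are precisely the ones whose carriers are cut off by passing to this intersection. Since $Y_n$ and $C_n$ are cube complexes with the same halfspace pocset, Roller's reconstruction (the fact that $X(\cH(X)) = X$, recorded in the Quotients subsection) gives a canonical isomorphism $i_n : Y_n \to C_n$.

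Finally I would check the factorization $\pi_n = i_n \circ \rho_n$ at the level of ultrafilters: a vertex of $L_1$ is an ultrafilter $\beta$ on the halfspaces crossing $L_1$; $\rho_n$ restricts it to those also crossing $L_n$; $\pi_n$ sends $\beta$ to the vertex of $L_n$ whose ultrafilter agrees with $\beta$ on the common hyperplanes and is forced on the rest by consistency (these are exactly the halfspaces defining $C_n$). So both routes give the same ultrafilter on $\cH(L_n)$, hence $\pi_n = i_n \circ \rho_n$, which is the claim. The embedding $Y_n \hookrightarrow L_n$ is then $i_n$ followed by the inclusion $C_n \subset L_n$.

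The main obstacle I expect is the bookkeeping needed to pin down precisely that the hyperplane set of $C_n$ equals the set of hyperplanes crossing both $L_1$ and $L_n$ — i.e.\ that intersecting $L_n$ with the relevant halfspaces removes exactly the carriers of the $L_n$-only hyperplanes and nothing more, leaving a convex subcomplex with the advertised pocset. This is where one must use convexity of intervals and the fact (from \cite{BC+}, cited just before \S 6.1) that the interval embeds $\ell_1$-isometrically in a Euclidean cube complex, together with care about which hyperplanes of $I$ actually meet the convex piece $C_n$. Everything else — the finiteness of the collapsed family, the well-definedness of the collapse, and the isomorphism of cube complexes with equal pocsets — follows directly from results already established in \S 2.
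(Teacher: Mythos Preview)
Your proposal is correct and follows essentially the same route as the paper: the paper's proof is in fact the discussion immediately preceding the lemma statement (the lemma is explicitly introduced as a summary of that discussion), and you have reproduced that argument---collapse the finitely many hyperplanes crossing $L_1$ but not $L_n$, identify the resulting pocset with that of the convex subcomplex $C_n\subset L_n$, and check $\pi_n=i_n\circ\rho_n$. Your added ultrafilter-level verification and the worry about the hyperplane set of $C_n$ are reasonable elaborations, but the paper simply asserts these points; the embedding into $\RR^D$ is ambient setup and is not actually needed for this step.
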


\subsection{Non-meager hyperplanes are inessential}

\begin{theorem}
Suppose that $I$ is an interval. Then 
\begin{enumerate}
\item $I$ contains only finitely many non-meager hyperplanes.
\label{FiniteNonMeager}
\item For every non-meager hyperplane $\hh$ in $I$, there exists some $R$ such that the $R$-neighborhood of $\hh$ is $I$
\label{NonMeagerIsEssential}
\end{enumerate}

\end{theorem}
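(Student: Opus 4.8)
The two statements are closely linked, so I would prove them together, using the structure theory developed in \S6.1--\S6.4. The key tool is the embedding $I \hookrightarrow \RR^D$ (with $D = \dim I$) and the layered/collapsing machinery from Lemma~\ref{uniformity} and Corollary~\ref{FiniteCollapse}. First I would set up the vertical/horizontal decomposition: for each of the $D$ coordinate directions of $\RR^D$, the hyperplanes transverse to that direction are linearly ordered (they cross $I$ in ``parallel'' slabs), and a hyperplane $\hh$ is \emph{horizontal} for that direction precisely when it is one of these. Every hyperplane of $I$ is horizontal for at least one of the $D$ directions, so it suffices to bound, for a fixed direction, the number of non-meager hyperplanes among $L_1, L_2, \dots$ and to prove the neighborhood statement for such a hyperplane.

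For part~(1): fix a direction and consider the horizontal hyperplanes $L_1 \subset L_2 \subset \cdots$ in $I = [v,\alpha]$, ordered so that $L_n$ is the $n$-th slab moving away from $v$. The ``Observation'' in the proof of Lemma~\ref{uniformity} says the projected balls shrink: $d_{m,r} \le d_{n,r} - (m-n)$ for $m > n$. I would combine this with Lemma~\ref{SubembeddingHyperplanes}: the projection $\pi_n \colon L_1 \to L_n$ factors through a collapsing $\rho_n \colon L_1 \onto Y_n$ followed by an embedding $Y_n \hookrightarrow L_n$, where $\rho_n$ eliminates exactly the hyperplanes that cross $L_1$ but not $L_n$ — and there are only finitely many such (they must separate $v$ from $L_n$). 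The plan is: if $\hh = L_1$ is non-meager in $I$, then by the ball-growth estimates a definite proportion of $B_v(I,r)$ lies in $C(\hh) \cong \hh \times (\text{interval})$; pushing this down through the collapsing maps and using Corollary~\ref{FiniteCollapse} (valid since $L_1$ has at most $D$ non-meager hyperplanes by induction on dimension), one sees that the slabs $L_n$ for $n$ large cannot \emph{all} carry a definite fraction of the volume — their total volume is controlled by $\sum_n |B_{?}(L_n, r)|$ which, via the shrinking radii $d_{n,r}$, is dominated by the volume in $L_1$ — forcing all but finitely many $L_n$ to be meager. Carrying this out cleanly is the main obstacle: one must make the ``a non-meager hyperplane forces definite volume in its carrier, and carriers of far-apart parallel hyperplanes have comparatively small volume'' argument quantitative, uniformly in $r$, which is exactly what Lemma~\ref{uniformity} was designed to supply.

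For part~(2): suppose $\hh$ is non-meager but \emph{not} within bounded distance of all of $I$, i.e.\ for every $R$ there is a vertex at distance $> R$ from $\hh$. Choosing a direction for which $\hh$ is horizontal, say $\hh = L_k$, the failure of the $R$-neighborhood condition means arbitrarily many horizontal slabs $L_n$ (on one side of $L_k$) are nonempty in $I$, and the interval-direction of $C(L_k)$ — which is what $\pi_n$ collapses as $n$ varies — has vertices far from $L_k$. I would argue that then the ``width'' of $I$ transverse to $\hh$ grows without bound, so by the layering estimate of Lemma~\ref{uniformity} applied in that direction (with the vertical factor now being the direction normal to $\hh$), the fraction of $B_v(I,r)$ within \emph{any} fixed-width neighborhood of $\hh$ — in particular within $C(\hh)$, which has width $1$ — tends to $0$. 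That contradicts non-meagerness of $\hh$. So a non-meager hyperplane must be coarsely dense, proving~(2).

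The honest assessment is that part~(2) is the quick consequence and part~(1) is where the real work sits; within part~(1) the delicate point is bookkeeping the collapsing maps $\rho_n$ together with the uniform (dimension-only) Følner bound $R(D,\eps)$ so that the estimate ``$\sum_{n \gg 0} |S_v \cap L_n|$ is a vanishing fraction of $|B_v(I,r)|$'' holds simultaneously for all large $r$; this is precisely why Lemma~\ref{uniformity} was stated in the sharp, complex-independent form rather than just Proposition~\ref{FolnerBalls}.
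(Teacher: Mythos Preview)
Your overall architecture for part~(1) --- induction on the dimension, fixing a vertical direction, using Lemma~\ref{SubembeddingHyperplanes} together with Corollary~\ref{FiniteCollapse}, and obtaining a contradiction from disjoint carriers --- matches the paper. But the direction of the key inequality is reversed in your sketch, and that is the entire content of the argument. You write that the collapsing shows ``the slabs $L_n$ for $n$ large cannot all carry a definite fraction of the volume --- their total volume \ldots is dominated by the volume in $L_1$.'' This is backwards. The collapsing map goes $L_1 \to L_n$ (collapse, then embed), so Corollary~\ref{FiniteCollapse} gives a \emph{lower} bound $|B_{v_n}(L_n,r')| \geq 2^{-N}|B_{v_1}(L_1,r')|$, where $N$ is the number of non-meager hyperplanes in $L_1$ (finite by induction --- not ``at most $D$'' as you wrote). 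Combining this with the non-meagerness of $L_1$ (which gives $|B_{v_1}(L_1,r_k-d_1)| > (\eps/2)|B_v(I,r_k)|$ along a subsequence $r_k$) and Corollary~\ref{BasePointIndependence} to adjust radii, every $L_n$ carries at least an $\eps/2^{N+1}$ fraction of $|B_v(I,r_k)|$. Since the carriers are disjoint, summing over $M > 2^{N+1}/\eps$ of them exceeds $1$. That is the contradiction: each slab is \emph{uniformly large}, not uniformly small. Your ``shrinking radii $d_{n,r}$'' remark from Lemma~\ref{uniformity} plays no role here; it was used in that lemma for a different purpose.

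For part~(2), your direct approach (argue that unbounded transverse width forces the carrier to have vanishing density) is not what the paper does and, as stated, does not follow from Lemma~\ref{uniformity}, which bounds spheres, not carriers. The paper's argument is much shorter: if the non-meager hyperplane $L_1$ is not coarsely dense, there are infinitely many parallel $L_n$, and the \emph{same} projection argument from part~(1) then shows each $L_n$ is non-meager, contradicting~(1). So~(2) is a corollary of the mechanism in~(1), not an independent argument.
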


\begin{proof}
We prove (1) by induction on $D$, the dimension of $I$. For $D=0$, the statement is trivial.  

Suppose that $I$ has infinitely many non-meager hyperplanes. Since $I$ is finite dimensional we may choose these hyperplanes $L_1,L_2, \ldots$ that they are disjoint. We further choose the $L_i$'s so that their carriers are disjoint. As in Section \ref{subembeddings}, we view $I$ as being embedded in $R^D$ and we may pass to a subsequence of the $\{L_n\}$ so that they are all transverse to some vertical direction of a factorization $\RR^D=\RR^{D-1}\times \RR$. We also order $\{L_n\}$ by distance to the base vertex $v$ and let $v_n$ denote the projection of $v$ onto $v_n$, for each $n$. We let $d_n=d(v,v_n)$

Since $L_1$ is not meager, there exists some number $\eps>0$ and a sequence $\{r_k\}$ such that 

$$\frac{\abs{B_v(I,r_k)\cap C(L_1)}}{\abs{B_v(I,r_k)}}>\eps$$

Since we are using the $1$-skeleton metric, we have that for all $n$, 
$$B_v(I,r)\cap L_1 = B_{v_n}(L_n, r-d_n)$$
Furthermore, we know that the natural map $C(L_n)\to L_n$ is 2-1 and thus between 2-1 and 1-1 in the intersection with $B(I,r_k)$, we obtain 
\begin{equation}
\label{eq1}\frac{\abs{B_{v_1}(L_1, r_k-d_1)}}{\abs{B_v(I,r_k)}}=\frac{\abs{B_v(I,r_k)\cap L_1}}{\abs{B_v(I,r_k)}}\geq\frac{\frac{1}{2}\abs{B_v(I,r_k)\cap C(L_1)}}{\abs{B_v(I,r_k)}} >\eps/2
\end{equation}

By induction we know that $L_1$ has only finitely many non-meager hyperplanes. So let $N$ denote the number of non-meager hyperplanes in $L_1$. By Lemma \ref{SubembeddingHyperplanes}, we have that the projection $L_1\to L_n$ factors through a collapse of finitely many hyperplanes and an embedding. We may thus apply Lemma \ref{FiniteCollapse} to obtain:

$$\frac{\abs{B_{v_n}(L_n, r_k-d_1)}}{\abs{B_{v_1}(L_1, r_k-d_1)}} \geq 1/{2^N}$$

By Lemma \ref{BasePointIndependence}, for $r_k$ sufficiently large, we obtain 

\begin{equation}\label{eq2}
\frac{\abs{B_{v_n}(L_n, r_k-d_k)}}{\abs{B_{v_n}(L_n, r_k-d_1)}}>1/2
\end{equation}
So that for $r_k$ sufficiently large, we obtain

$$\frac{\abs{B_{v_n}(L_n, r_k-d_k)}}{\abs{B_{v_1}(L_1, r_k-d_1)}}>1/{2^{N+1}}$$

We choose $M>2^{N+1}/\eps$ and choose $r_k$ sufficiently large so that for all $1\leq n\leq M$, equation \ref{eq2} holds.
Putting this together with inequality \ref{eq1}, we obtain for all $1\leq n\leq M$, 

$$\frac{\abs{B_v(I,r_k)\cap C(L_n)}}{\abs{B_v(I,r)}} > \frac{\abs{B_{v_n}(L_n, r_k-d_k)}}{\abs{B_v(I,r)}}>\eps/{2^{N+1}}$$

Summing over the first $M$ levels, we obtain 

$$\sum_{n=1}^M \frac{\abs{B_v(I,r_k)\cap C(L_n)}}{\abs{B_v(I,r_k)}}>M\eps/2^{N+1}>1$$

a contradiction.

To prove (2), we proceed in the same manner. Suppose there was a non-meager hyperplane in $I$ which did not have a neighborhood containing $I$. Then, calling that hyperplane $L_1$, we would then have an infinite sequence of disjoint hyperplanes $L_1, L_2, \ldots$ in $I$. Now the same projection arguments as above show that each $L_n$ is non-meager, contradicting part (1).
\end{proof}
\bigskip

\begin{corollary}
If $E=[p,\alpha]$, where $p\in I$ and $\alpha$ is a nonterminating ultrafilter, then all the hyperplanes in $E$ are meager.
\label{AllMeager}
\end{corollary}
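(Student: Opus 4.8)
The plan is to deduce this directly from the preceding theorem, arguing by contradiction. Note first that $E=[p,\alpha]$ is itself an interval, so both parts of that theorem apply to it. Suppose, for contradiction, that $E$ contains a non-meager hyperplane $\hh$.

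The first step is a routine identification of the hyperplanes of $E$ viewed as a cube complex. By Lemma~\ref{IntervalDescription} we have $E=\{\gamma\mid \alpha_p\cap\alpha\subseteq\gamma\}$, so a hyperplane $\hk$ of $X$ ``occurs in'' $E$ exactly when neither $\k$ nor $\k^*$ lies in $\alpha_p\cap\alpha$ --- equivalently, exactly when $\hk$ separates $p$ from $\alpha$ --- in which case the ultrafilters $\alpha_p,\alpha\in E$ themselves witness that both $\k$ and $\k^*$ appear among the elements of $E$.

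Next I would apply part~(\ref{NonMeagerIsEssential}) of the theorem: there is a constant $R$ with $N_R(\hh)=E$, i.e.\ at most $R$ hyperplanes of $E$ separate any given vertex of $E$ from $\hh$. Let $\h$ be the halfspace bounded by $\hh$ with $\h\in\alpha$, so that $p\in\h^*$. Since $\alpha$ is nonterminating, iterating its defining property yields a strictly descending chain $\h=\h_0\supsetneq\h_1\supsetneq\h_2\supsetneq\cdots$ with every $\h_i\in\alpha$; and each $\hh_i$ still separates $p$ from $\alpha$, since $\h_i\in\alpha$ while $\h_i\subseteq\h$ forces $p\in\h^*\subseteq\h_i^*$. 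Thus each $\hh_i$ is a hyperplane of $E$. For each $k$ I would then pick a vertex $w_k$ of $E$ lying in the carrier of $\hh_k$ on the $\h_k$-side (possible since the carrier is a product $\hh_k\times[0,1]$ meeting both sides of $\hh_k$). Because $\h_i\subsetneq\h_0$ the hyperplanes $\hh_0,\hh_i$ are disjoint, and the carrier of $\hh_0$, which meets $\h_0^*\subseteq\h_i^*$ and does not cross $\hh_i$, lies entirely on the $\h_i^*$-side of $\hh_i$; meanwhile $w_k\in\h_k\subseteq\h_i$ for $1\le i\le k-1$. Hence the $k-1$ distinct hyperplanes $\hh_1,\dots,\hh_{k-1}$ all separate $w_k$ from $\hh$, so $d(w_k,\hh)\ge k-1$. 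Taking $k>R+1$ contradicts $N_R(\hh)=E$.

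The substantive content is entirely the invocation of the preceding theorem; the only place needing care is the elementary cube-complex bookkeeping in the last step --- that a strictly nested chain $\h_0\supsetneq\h_1\supsetneq\cdots$ has pairwise non-crossing bounding hyperplanes (crossing would force the corner region $\h_0^*\cap\h_i$ to be nonempty, contradicting $\h_i\subseteq\h_0$), that $\hh_0$ therefore sits on the $\h_i^*$-side of each $\hh_i$, and that carriers always contain vertices on both sides. No group action, cocompactness, or essentiality of $X$ enters, so the corollary is essentially immediate from the theorem once this bookkeeping is recorded.
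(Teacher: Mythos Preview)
Your proof is correct and follows the same route as the paper's: both argue via part~(\ref{NonMeagerIsEssential}) of the preceding theorem, using the nonterminating property of $\alpha$ to produce an infinite nested chain of hyperplanes beyond any given $\hh$, so that no $R$-neighborhood of $\hh$ can exhaust $E$. The paper's version is simply terser --- it says only that for every $\hh$ there is $\hh'$ separating $\hh$ from $\alpha$, hence no neighborhood of $\hh$ contains all of $E$ --- while you have written out the bookkeeping explicitly.
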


\begin{proof}
By the definition of nonterminating, for every hyperplane $\hh$, there exists another hyperplanes $\hh'$ separating $\hh$ from $\alpha$. Therefore no neighborhood of $\hh$ contains all of $I$.
\end{proof}

%


%

%







\subsection{Proof of Theorem \ref{AmenableInterval}}

Finally, we are now ready to prove the main theorem of this section. 
\begin{proof}[Proof of Theorem \ref{AmenableInterval}] Let $m = med(v,w,\alpha)$. Then we have that 

$$[v,\alpha]\cap[w,\alpha]=[m,\alpha]$$

Also,  $[v,\alpha]\triangle [w,\alpha] = [v,\alpha]\cup[w,\alpha] - [m,\alpha]$, so that in order to show our result, it suffices to show that 

$$\lim_{r\to\infty} \frac{ \vert ( [v,\alpha]-[m,\alpha]) \cap B(r)  \vert  }{\vert [m,\alpha]\cap B(r)  \vert   } = 0$$

Let $\hh_1,\ldots, \hh_k$ denote the hyperplanes separating $v$ and $m$. Then $$[v,\alpha]-[m,\alpha]\subset \bigcup_{i=1}^k C(\hh_i)$$
Thus, we conclude that independently of the base point defining $B(r)$ 

$$\lim_{r\to\infty}  \frac{ \vert ( [v,\alpha]-[m,\alpha]) \cap B(r)  \vert  }{\vert [m,\alpha]\cap B(r)  \vert   } \leq \lim_{r\to\infty} \frac{\vert ( \bigcup_{i=q}^k C(\hh_i ) \cap B(r) \vert }{  \vert [m, \alpha] \cap B(r)   \vert  } =0,$$
 by Corollary \ref{AllMeager}.
\end{proof}

\subsection{Property $A$ for groups acting on finite-dimensional cube complexes}

It was shown in \cite{BC+} that a finite-dimensional cube complex satisfies Yu's Property $A$. 
We give here an alternative proof in the case the cube complex is in addition locally finite. Indeed,  let us first  assume 
that the cube complex is irreducible non-Euclidean. In that case, $\nonterm(X) $ is non-empty, namely there is a non-terminating ultrafilter $\alpha$. For any vertex $v\in X$, consider the sets $B_n(v)\cap [v,\alpha]=A_{v,n}$, namely the intersection of the interval from $v$ to $\alpha$ with a ball of radius $n$ and center $v$ in $X$. Then by Theorem \ref{AmenableInterval}  for any $\vare > 0$ the sets satisfy 
$$\frac{\abs{A_{v,n} \Delta A_{w,m}}}{\abs{A_{v,n}}}< \vare, \text{ provided } m\ge n \ge n(\vare)$$ 
and of course $\text{diam } A_{v,n}\le 2n$. This sequence of sets constitutes a direct generalization of the sequence usually used to show that a tree has property $A$, namely the sequence consisting of intervals of length $n$ on the unique geodesic from a vertex $v$ to a boundary point $\alpha$. The existence of such a sequence implies property $A$ for the cube complex, by definition. Now using the product decomposition theorem of \cite{CS}  the result follows for every finite-dimensional locally finite \cat cube complex, recalling also 
that Euclidean complexes have Property $A$,  since they admit a Folner sequence.

We now turn to another ingredient that plays an important role in the proof of Theorem \ref{B(X) UMSP}.

\section{Amenability of the boundary action}

In the present section we will discuss the amenability properties of the  action of discrete groups on the boundary of a cube complex.  The key to our analysis is the existence of intervals connecting vertices in the complex to boundary points, with balls in the intervals satisfying the Folner property established in  Theorem \ref{AmenableInterval} and Corollary 
\ref{BasePointIndependence}. Later on, in our discussion of the Poisson boundary, we will also use the fact that intervals embed in $\RR^n$ and hence have polynomial growth. 

Let us first recall the following definitions of amenability of an action. 
Let $P(G)$ denote the space 
of probability measures on the countable group $G$ taken with the total variation norm ($\ell^1(G)$-metric). $G$ acts on $P(G)$ by translations, denoted $\mu\mapsto g\mu$, and the action is continuous. We denote by $P_c(G)$ the subspace of finitely supported measures.

\begin{definition}\label{TopAmen}{\bf Topological, Borel and universal amenability.}

\begin{enumerate}
\item 
The $G$ action on a locally compact metric space $B$ is called topologically  amenable, if we can find a sequence 
of continuous  functions $\beta_n:B\to P_c(G)$, such that 
$$
\lim_{n\to \infty}\norm{\beta_n(gb)-g\beta_n(b)}=0\,\,
$$
for every $b\in B$, and the convergence is uniform on compacts subsets in $B$. 
\item The $G$ action on a standard Borel space $B$ is called Borel amenable if  we can find a sequence of Borel measurable functions $\beta_n:B\to P_c(G)$ which satisfies 
$\lim_{n\to \infty}\norm{\beta_n(gb)-g\beta_n(b)}=0$ 
for every $b\in B$. 
\item The $G$ action on a standard Borel space $B$ is called universally (or measure-wise) amenable if for every $G$-quasi-invariant probability measure 
$\eta$ on $B$, we can find a sequence of measurable functions $\beta_n$, which are defined and satisfy $\lim_{n\to \infty}\norm{\beta_n(gb)-g\beta_n(b)}=0$ for $\eta$-almost all points $b\in B$.   
\end{enumerate} 
\end{definition}

The space $B$ we will consider to begin with is $\nonterm(X)$, which by  Proposition \ref{DenseGDelta} is a dense  $G_\delta$ of the boundary $B(X)$.

\begin{theorem}\label{universal}
Let $X$ be an irreducible, non-Euclidean \cat cube complex.  Then for any discrete subgroup $G \subset \Aut(X)$ acting properly, the action on $\nonterm (X)$ is Borel amenable, and hence also a universally (or measure-wise) amenable action. 
\end{theorem}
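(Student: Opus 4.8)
The plan is to construct, for each $n$, a continuous-enough map $\beta_n : \nonterm(X) \to P_c(G)$ whose equivariance error tends to $0$ pointwise, by transporting the Folner-type structure of intervals from Theorem \ref{AmenableInterval} to the group. Fix a base vertex $o \in X$. For $\alpha \in \nonterm(X)$, the interval $[o,\alpha]$ carries the nested family of finite sets $A_{o,n}(\alpha) = [o,\alpha] \cap B_o(n)$, and by Theorem \ref{AmenableInterval} (together with base-point independence, Corollary \ref{BasePointIndependence}) these behave asymptotically like a Folner sequence: for $v, w$ vertices, $|A_{v,n}(\alpha) \triangle A_{w,m}(\alpha)| / |A_{v,n}(\alpha)| \to 0$ as $n,m \to \infty$ appropriately. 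Since $G$ acts properly on $X$, the vertex set $X^{(0)}$ is ``uniformly discrete'' from the group's point of view: choosing an orbit $G \cdot o$ (or using the fact that stabilizers are finite), a measure on $A_{o,n}(\alpha) \subset X^{(0)}$ can be pushed to a measure on $G$ by picking, for each vertex in the orbit, a bounded number of group elements carrying $o$ there and distributing mass uniformly among them. Concretely I would set $\beta_n(\alpha)$ to be the normalized counting measure on $\{ g \in G : go \in A_{o,n}(\alpha) \}$, renormalized to account for the finite stabilizer.

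The verification has three parts. First, the \emph{asymptotic equivariance}: for $g \in G$, $g A_{o,n}(\alpha) = [go, g\alpha] \cap B_{go}(n) = A_{go,n}(g\alpha)$, and by the Folner property this differs from $A_{o,n}(g\alpha)$ by a proportionally negligible set; translating back through the definition of $\beta_n$ gives $\|\beta_n(g\alpha) - g\beta_n(\alpha)\|_1 \to 0$ for each fixed $g$ and each $\alpha$. Here properness of the action is what makes the $\ell^1(G)$-norm comparable to the normalized symmetric difference of the vertex sets (the multiplicities introduced by passing from vertices to group elements are uniformly bounded, since stabilizers have uniformly bounded order by cocompactness and properness, or at least each individual stabilizer is finite). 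Second, \emph{Borel measurability}: the map $\alpha \mapsto [o,\alpha] \cap B_o(n)$ takes only finitely many values on each of countably many Borel pieces of $\nonterm(X)$ — indeed membership of a fixed vertex $v$ in $[o,\alpha]$ is, by Lemma \ref{IntervalDescription}, the condition $\alpha_o \cap \alpha_v \subset \alpha$, which is a clopen condition on $\alpha$ in the Tychonoff topology — so $\beta_n$ is Borel (in fact Baire-class one, and on a suitable dense set continuous). Third, one notes each $\beta_n(\alpha)$ is genuinely finitely supported because $B_o(n)$ is finite (local finiteness of $X$) and $o$ has finite $G$-stabilizer.

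Once Borel amenability of the $\nonterm(X)$-action is established, universal (measure-wise) amenability is automatic: given any $G$-quasi-invariant probability measure $\eta$ on $\nonterm(X)$, the same Borel maps $\beta_n$ serve, and the pointwise convergence $\|\beta_n(g\alpha) - g\beta_n(\alpha)\|_1 \to 0$ holds everywhere, hence $\eta$-almost everywhere — this is exactly the implication recorded in Definition \ref{TopAmen}. I would also remark that the irreducibility and non-Euclidean hypotheses enter only to guarantee (via Theorem \ref{non-empty}) that $\nonterm(X)$ is non-empty and that Theorem \ref{AmenableInterval} is non-vacuous; the Folner estimate itself, being the content of Section 6, holds for general finite-dimensional locally finite cube complexes.

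The main obstacle, I expect, is the passage from the combinatorial Folner property of interval-balls \emph{inside the fixed complex $X$} to a genuine Reiter-type sequence \emph{on the group $G$}, with the error measured in $\ell^1(G)$. This requires carefully controlling the fibers of the orbit map $G \to X^{(0)}$, $g \mapsto go$: proper cocompact action gives that these fibers are finite and, up to the action of a fixed finite stabilizer, of constant size, so the counting-measure transport distorts ratios only by a bounded multiplicative constant — but making this precise, and checking that it does not destroy the limit $\|\beta_n(g\alpha) - g\beta_n(\alpha)\|_1 \to 0$, is the technical heart of the argument. A secondary subtlety is that the convergence in Theorem \ref{AmenableInterval} is of the symmetric difference relative to the \emph{union} $[v,\alpha]\cup[w,\alpha]$, so one must chain through the median $m = med(o, go\cdot o^{-1}?, \alpha)$ style identities (Lemmas \ref{IntervalDescription}–\ref{IntervalIntersection}) and the base-point shift of Corollary \ref{BasePointIndependence} to land the estimate in the form actually needed for equivariance of $\beta_n$.
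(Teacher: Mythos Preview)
Your proposal follows essentially the same strategy as the paper: define a Reiter sequence on the vertex set via the uniform measures on interval-balls $B(n,o)\cap[o,\alpha]$, invoke Theorem \ref{AmenableInterval} (and Corollary \ref{BasePointIndependence}) for asymptotic equivariance on $V(X)$, then transport to $\ell^1(G)$ using properness.

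The one substantive difference is in the transport step, and the paper's device there is worth knowing because it dissolves exactly the obstacle you flag at the end. You propose restricting to a \emph{single} orbit $G\cdot o$, i.e.\ taking $\beta_n(\alpha)$ to be the normalized counting measure on $\{g\in G: go\in A_{o,n}(\alpha)\}$. This requires the F{\o}lner estimate for $(G\cdot o)\cap A_{o,n}(\alpha)$, which is not what Theorem \ref{AmenableInterval} gives (it concerns the full vertex set of the interval-ball), and the passage is indeed delicate without cocompactness of $G$ (which the theorem does not assume). The paper instead fixes a transversal $T$ to the $G$-orbits in $V(X)$ and sets
\[
\beta_n(b,g)\;=\;\sum_{v\in T}\frac{\tilde\beta_n(b,gv)}{|G_v|},
\]
where $\tilde\beta_n(b,\cdot)$ is the uniform measure on $B(n,o)\cap[o,b)$ viewed as a function on $V(X)$. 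A direct computation (summing over cosets of the finite stabilizers $G_v$) shows that this map is an \emph{isometry} $\ell^1(V)\to\ell^1(G)$ on the relevant measures, so the F{\o}lner inequality on $V(X)$ transfers to $\ell^1(G)$ with no loss and no need to analyze individual orbits. Your instinct that ``multiplicities are uniformly bounded'' is correct in spirit, but the transversal trick gives equality rather than a constant, and works for any proper (not necessarily cocompact) $G$. Everything else in your outline --- the measurability argument via clopen conditions from Lemma \ref{IntervalDescription}, finite support from local finiteness, and the automatic passage from Borel to universal amenability --- matches the paper.
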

\begin{proof}
We will first construct a sequence of Borel maps $\tilde{\beta}_n : \nonterm (X)\to P(V)$, where $V=V(X)$ is the set of vertices of the complex $X$, taking values in finitely-supported probabilty measures on the vertex set $V(X)$. 

To define $\tilde{\beta}_n (b)$ when $b$ is a nonterminating ultrafilter, fix a reference vertex $o\in V(X)$. Given any point $b\in \nonterm (X)$, draw the interval from 
$o$ to $b$, denoted $[o,b)$. Define $\tilde{\beta}_n : \nonterm (X)\to P(V)$, where $\tilde{\beta}_n(b)$ is the probability measure uniformly distributed on the finite set of vertices obtained as the intersection of a ball of radius $n$ with center $o$ and the interval $[o,b)$  from $o$ to $b$. 

Now note that $\tilde{\beta}_n(gb)$ is the measure on the complex uniformly distributed 
on $B(n,o)\cap[o,gb)$. On the other hand, $g\tilde{\beta}_n(b)$ is the measure uniformly distributed on 
$B(n,go)\cap [go,gb)$. 

In view of Theorem \ref{AmenableInterval}, given $\vare > 0$, there exists an $n$ sufficiently large such that outside the ball $B(n,o)$, the symmetric difference between the intervals 
$[o,gb)$ and $[go,gb)$ has size bounded by $\vare$ times the sum of their sizes.

It follows the difference between the measures namely 
$\norm{\tilde{\beta}_n(gb)-g\tilde{\beta}_n(b)}$ (in $\ell^1 (V(X))$-norm), does indeed converge to zero, for any  given $g$ in $G$. 

Let us also note that the convergence is in fact uniform on $\nonterm(X)$ by Lemma \ref{uniformity}, 
namely for a fixed $\vare > 0$, $n$ can be chosen independent of $b\in \nonterm(X)$. 

We now pass to a discrete subgroup $G\subset \Aut(X)$ acting properly by restricting each measure
$\tilde{\beta}_n$ to the various $G$-orbits in the complex, renormalizing and viewing the resulting measure 
as an element of $P(G)$. More explicitly, let $T$ be a transversal to the $G$-orbits in $V(X)$, and for $v\in T$ let $G_v$ be the stability group of $v$ in $G$, which is finite since the action of $G$ is proper. 
Let us view the probability measure $\tilde{\beta}_n(b)$ as function on the vertices in $V(X)$, and define   
$$\beta_n(b,g)=\sum_{v\in T} \frac{\tilde{\beta}_n(b,gv)}{\abs{G_v}}\,.$$
Then the $\ell^1(G)$-norm of $\beta_n(b,\cdot)$ is 
$$\norm{\beta_n(b,\cdot)}_{\ell^1(G)} =\sum_{g\in G} \beta_n(g,b)=\sum_{g\in G,v\in T} \frac{\tilde{\beta}_n(b,gv)}{\abs{G_v}}=\sum_{v\in T}\sum_{w\in G\cdot v} \tilde{\beta}_n(b,w)\sum_{g\in G : gv=w}\frac{1}{\abs{G_v}}$$
$$
=\sum_{v\in T}\sum_{w\in G\cdot v} \tilde{\beta}_n(b,w)=\norm{\tilde{\beta}_n(b,\cdot)}_{\ell^1(V)}\,.
$$
Clearly a similar computation shows that $\norm{g\beta_n(b)-\beta_n(gb)}_{\ell^1(G}\to 0$ and thus 
$\nonterm(X)$ is a Borel-amenable action of $G$. 
\end{proof}

%
A simple consequence of Theorem  \ref{universal} is the following 

\begin{corollary}\label{stability}
The stability group of a point in $\nonterm (X)$, namely of a nonterminating ultrafilter in a (finite dimensional) \cat cube complex  is an amenable group.
\end{corollary}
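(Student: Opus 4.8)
The plan is to deduce the statement directly from Theorem \ref{universal} together with the standard fact that a group admitting a universally (equivalently, amenable) action with an \emph{invariant} probability measure on a Borel space must itself be amenable. First I would reduce to the irreducible non-Euclidean case: by the product decomposition theorem of \cite{CS}, a finite-dimensional \cat cube complex $X$ factors (after passing to a finite-index subgroup) as a product, and $\nonterm(X)=\prod_i\nonterm(X_i)$; a nonterminating ultrafilter $\alpha$ is then a tuple $(\alpha_i)$, and its stabilizer in the finite-index subgroup is contained in the product of the stabilizers of the $\alpha_i$ in the factor groups. Since a finite extension of an amenable group is amenable and a product of finitely many amenable groups is amenable, it suffices to treat each irreducible factor, which is either Euclidean (whence the relevant group is virtually abelian, hence amenable) or irreducible non-Euclidean, to which Theorem \ref{universal} applies.

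Next, fix an irreducible non-Euclidean $X$ and let $H = \Stab_G(\alpha)$ for a point $\alpha\in\nonterm(X)$. I would apply Theorem \ref{universal} to the group $H$ itself (it is a discrete subgroup of $\Aut(X)$ acting properly), obtaining that the $H$-action on $\nonterm(X)$ is Borel amenable. Now the Dirac mass $\delta_\alpha$ is an $H$-invariant Borel probability measure on $\nonterm(X)$. Restricting the amenability functions $\beta_n\colon \nonterm(X)\to P_c(H)$ to the single point $\alpha$ gives a sequence $\beta_n(\alpha)\in P_c(H)$ with $\norm{\beta_n(h\alpha)-h\beta_n(\alpha)}=\norm{\beta_n(\alpha)-h\beta_n(\alpha)}\to 0$ for every $h\in H$; that is, the sequence $\beta_n(\alpha)$ is an approximately invariant sequence of probability measures on $H$, which is precisely a F\o lner-type / amenability net for $H$. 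Hence $H$ is amenable.

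Alternatively, and perhaps more cleanly, one can bypass the reduction and argue directly: $\Stab_G(\alpha)$ acts on the interval $[o,\alpha)$ quasi-preserving the uniform measures on the balls $A_{o,n}=B(n,o)\cap[o,\alpha)$, and Theorem \ref{AmenableInterval} (with $v=o$, $w=go$) shows these measures are asymptotically invariant under $\Stab_G(\alpha)$; passing to the associated measures on $G$ via the orbit-decomposition trick at the end of the proof of Theorem \ref{universal} (legitimate since point stabilizers of the proper action are finite) produces an approximately invariant sequence in $P_c(\Stab_G(\alpha))$ directly, again yielding amenability.

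The only genuinely delicate point is making sure the group to which one applies the amenability statement is exactly the stabilizer (so that $\delta_\alpha$ is invariant), and checking that ``amenable action of $H$ on a space carrying an $H$-invariant probability measure'' indeed forces $H$ amenable — this is standard, but worth citing; everything else is bookkeeping around the product decomposition and the finiteness of point stabilizers under a proper action. I expect no real obstacle beyond this.
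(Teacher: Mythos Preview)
Your proposal is correct and rests on the same core idea as the paper: evaluate the asymptotically equivariant sequence $\beta_n$ from Theorem~\ref{universal} at the fixed point $\alpha$, and observe that the resulting sequence of finitely supported probability measures is asymptotically invariant under the stabilizer.

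The packaging differs slightly. The paper does not bother with the product reduction and does not re-apply Theorem~\ref{universal} to $H=\Stab_G(\alpha)$; it simply takes the sequence $\beta_n(b)\in P_c(G)$ already produced for $G$, notes that $\norm{\beta_n(b)-g\beta_n(b)}\to 0$ for every $g\in S=\Stab_G(b)$, passes to a weak-$*$ limit to obtain an $S$-invariant mean on $\ell^\infty(G)$, and then transfers this to an $S$-invariant mean on $\ell^\infty(S)$ by the obvious coset-extension trick. Your route---applying Theorem~\ref{universal} directly to $H$ so that the measures already live in $P_c(H)$---is arguably cleaner and avoids the transfer step, at the cost of the extra product/Euclidean reduction needed to put yourself in the irreducible non-Euclidean setting where Theorem~\ref{universal} is stated. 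Your second ``alternative'' paragraph is in fact closest in spirit to what the paper actually does.
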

\begin{proof}
Let $g\in S=St_G(b)$ be a group element which stabilized $b\in \nonterm(X)$. Then 
the sequence of finitely supported probability measures $\beta_n(b)\in P_c(G)$ is clearly asymptotically invariant under left translation by $g$. The sequence defines an mean on $\ell^\infty(G)$ in the usual way, and this mean is invariant under the subgroup $S$. Extending a bounded function on $S$ to a bounded function on $G$ by transfering it to the other cosets in the obvious way, we get an $S$-invariant mean on $\ell^\infty(S)$, so $S$ is amenable. 
\end{proof}

We note that Corollary \ref{stability} is in fact a consequence of more general results, considered in various formulations in \cite{CN},  \cite{Ca07} and \cite{BC+}. 
Of those, we quote the result of \cite{BC+} most pertinent to us. 
 \begin{theorem}
Let $X$ be a (finite dimensional) \cat cube complex. 
 The stability group of every ultrafilter, namely every point in the Roller boundary and thus in particular every point in  $B(X)$,  is an amenable group.
\end{theorem}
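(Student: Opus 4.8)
The plan is to reduce everything to the case of a \emph{nonterminating} ultrafilter, which is already settled by Corollary~\ref{stability}. Observe first that the construction behind Corollary~\ref{stability} --- attaching to a nonterminating $\alpha$ and a basepoint $o$ the finitely supported measures built from $B(n,o)\cap[o,\alpha)$, and invoking the Folner property of intervals (Theorem~\ref{AmenableInterval} and Lemma~\ref{uniformity}) --- uses only finite dimensionality together with local finiteness, and in particular neither irreducibility nor essentiality of $X$. So it suffices to produce, for an \emph{arbitrary} $\alpha\in\cU(X)$, a $\Stab_G(\alpha)$--invariant convex subcomplex $C_\alpha\subseteq X$ on which $\alpha$ restricts to a nonterminating ultrafilter: a convex subcomplex of a finite-dimensional locally finite \cat cube complex is again one, a proper action restricts to a proper action on any invariant subcomplex, and $\Stab_G(\alpha)$ fixes the restricted ultrafilter, so Corollary~\ref{stability} applied to $C_\alpha$ then gives that $\Stab_G(\alpha)$ is amenable.

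To build $C_\alpha$, let $\cM\subseteq\alpha$ be the set of halfspaces minimal for inclusion in $\alpha$. If $\cM=\emptyset$ then $\alpha$ is already nonterminating and $C_\alpha=X$ works; so assume $\cM\neq\emptyset$, and set
$$\cK=\{\,\h\in\cH : m\subseteq\h\ \text{or}\ m\subseteq\h^*\ \text{for some}\ m\in\cM\,\},\qquad C_\alpha=\bigcap_{m\in\cM}m .$$
Then $\cK$ is involution-invariant and $C_\alpha$ is an intersection of halfspaces of $\alpha$, hence convex; and $\Stab_G(\alpha)$, permuting $\cM$, preserves $\cK$ and $C_\alpha$. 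The technical heart --- and the step I expect to be the main obstacle --- is to show that $\langle\cM\rangle:=\{\h:m\subseteq\h\ \text{for some}\ m\in\cM\}$, which one checks equals $\alpha\cap\cK$, is a \emph{descending-chain-terminating} ultrafilter on $\cK$; granting this, $\langle\cM\rangle$ is a vertex of the quotient complex $X_\cK$, and by Lemma~\ref{Quotient} (equivalently Corollary~\ref{NonTrivialIntersection}) $C_\alpha$ is precisely the non-empty fibre of the projection $X\to X_\cK$ over that vertex, carrying the cube-complex structure with halfspace pocset $\cH\setminus\cK$. I would prove descending-chain-termination by contradiction: an infinite strictly descending chain $\h_1\supsetneq\h_2\supsetneq\cdots$ in $\langle\cM\rangle$ comes with witnesses $m_i\in\cM$, $m_i\subseteq\h_i$, and then either some $m\in\cM$ occurs infinitely often --- impossible, since then (along that subsequence) $m$ would lie inside an infinite strictly descending chain of halfspaces, whereas for a vertex $v\in m$ the integers $d(v,\hh_i)$ would be strictly decreasing and non-negative --- or infinitely many distinct $m_i$ occur, which is also impossible because distinct minimal elements of an ultrafilter have pairwise \emph{crossing} bounding hyperplanes (none of the four inclusions among $m_i,m_i^*,m_j,m_j^*$ survives, minimality ruling out two of them and the ultrafilter axioms the other two), while a finite-dimensional complex has only finitely many pairwise crossing hyperplanes.

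It remains to verify that $\alpha$ restricts to a nonterminating ultrafilter of $C_\alpha$, i.e.\ that $\alpha\cap(\cH\setminus\cK)$ has no minimal element. If $\h\in\alpha$ with $\hh\notin\cK$ were minimal among the halfspaces of $\alpha$ lying over $\cH\setminus\cK$, then any $\k\in\alpha$ with $\k\subsetneq\h$ would have to satisfy $\hk\in\cK$ (otherwise it contradicts minimality); but $\hk\in\cK$ together with $\k\in\alpha$ forces $m\subseteq\k$ for some $m\in\cM$ (the alternative $m\subseteq\k^*$ would put $\k^*$ into $\alpha$), whence $m\subseteq\k\subseteq\h$ and $\hh\in\cK$ --- a contradiction. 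So $\h$ has nothing strictly below it in $\alpha$ at all, i.e.\ $\h\in\cM$ and again $\hh\in\cK$, absurd. Thus $\alpha\cap(\cH\setminus\cK)$ is nonterminating on $C_\alpha$, Corollary~\ref{stability} applies to the proper action of $\Stab_G(\alpha)$ on $C_\alpha$, and $\Stab_G(\alpha)$ is amenable; since $\nonterm(X)\subseteq B(X)\subseteq\cU(X)$, every point of $B(X)$ is an ultrafilter, so this covers $B(X)$ as well. (An alternative to the whole reduction is to encode the terminal directions of $\alpha$ in a combinatorial Busemann homomorphism $\Stab_G(\alpha)\to\ZZ^{k}$ whose kernel preserves every horosphere, and to induct on dimension with Corollary~\ref{stability} as the base case; in either approach the crux is exactly that the ``terminating part'' of $\alpha$ splits off cleanly as the principal ultrafilter recorded by $\cK$.)
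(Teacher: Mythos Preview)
The paper does not actually prove this theorem: it is stated as a quotation of a result from \cite{BC+}, immediately after noting that Corollary~\ref{stability} is a special case of more general results in \cite{CN}, \cite{Ca07}, \cite{BC+}. So there is no in-paper argument to compare against; your proposal is an attempt at an independent proof.

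Your reduction strategy---pass to the convex subcomplex $C_\alpha=\bigcap_{m\in\cM}m$ cut out by the minimal halfspaces of $\alpha$, observe that $\Stab_G(\alpha)$ acts on it, and show that $\alpha$ restricts to a nonterminating ultrafilter there---is a natural idea, and several steps are fine (in particular your verification that $\alpha\cap(\cH\setminus\cK)$ has no minimal element, and part~(a) of the DCC argument). However, the claim that \emph{distinct minimal elements of an ultrafilter have pairwise crossing bounding hyperplanes} is false, and this is the load-bearing step in your argument that $\langle\cM\rangle$ satisfies DCC. Of the four possible nested relations between $m_i,m_i^*,m_j,m_j^*$, minimality rules out $m_i\subset m_j$ and $m_j\subset m_i$, and consistency rules out $m_i\subset m_j^*$; but the fourth relation $m_i^*\subset m_j$ (equivalently $m_j^*\subset m_i$, the ``facing'' configuration) is perfectly compatible with both $m_i,m_j\in\alpha$ being minimal. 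The simplest counterexample is a principal ultrafilter $\alpha_v$ at a vertex $v$ of valence~$3$ in a tree: the three minimal halfspaces are those across the edges at $v$ containing $v$, and their bounding hyperplanes are pairwise disjoint, not crossing. The same phenomenon appears for the ultrafilter $(\infty,0)$ in $\RR^2$ discussed after Theorem~\ref{AmenableInterval}: its two minimal halfspaces $\{y<1/2\}$ and $\{y>-1/2\}$ face each other.

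Consequently your bound $|\cM|\le\dim X$ fails, and with it your proof that $\langle\cM\rangle$ is DCC and that $C_\alpha$ is non-empty. The overall architecture may well be repairable---one would need a different argument that $\cM$ is finite (or directly that $\langle\cM\rangle$ is DCC) in a locally finite finite-dimensional complex, handling the facing configurations---but as written the proof has a genuine gap at exactly the step you flagged as ``the technical heart''.
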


\begin{remark}\
\begin{enumerate}

\item In \cite{Ca07} a structure theorem is proved for amenable closed subgroups of a group that acts properly, co-compactly and discontinuously on Hadamard spaces, and in particular on finite-dimensional locally finite  \cat cube complexes. It is shown that an amenable  group virtually admits a homomorphism into $\RR^d$, with the kernel being a topologically locally finite group. 

\item We do not know whether  the action of $G$ on $B(X)$ is topologically amenable. The remark following Theorem \ref{AmenableInterval} casts some doubt whether this can be true, but is not conclusive. 

\end{enumerate}
\end{remark}

\section{Maximality of the boundary}
 
 In the present section we will show that the boundary $B(X)$ with its unique stationary measure $\nu$ 
 is a compact metric model of the Poisson boundary $\cB(G,\mu)$. We will use in our analysis an important criterion for boundary maximality developed by V. Kaimanovich, called the strip criterion. This criterion is applicable in the context of cube complexes, since intervals provide us with a natural notion of strips in the complex. In fact, any two distinct ultrafilters $\alpha\neq \beta$ determine a unique interval of ultrafilters  between them, defined by 

 $$[\alpha,\beta]=\{\gamma\vert m(\alpha,\beta,\gamma)=\gamma\}\,.$$
 By Lemma \ref{IntervalDescription}, we have that 
 
 \begin{equation}\label{IntervalAsIntersection}
[\alpha,\beta]=\bigcap_{\h\in\alpha\cap\beta} U_\h
\end{equation}
The map $B(X)\times B(X) \to 2^{\cU(X)}$ defined by   $(\alpha,\beta)\mapsto [\alpha,\beta]$ is of course $\Aut(X)$-equivariant.   
 
 \subsection{Generic pairs of ultrafilters}
 
  It may happen, however, that the interval consists of non-principal ultrafilters, i.e.  the strip between two non-principal ultrafilters $ \alpha$ and $\beta$ may lie itself "at infinity". This arises even in simple and natural examples, such as $\ZZ^2$ and $T_3\times T_3$.

Thus not all pairs of boundary points are alike, and we must find the right notion a ``generic pair". A pair $(\alpha,\beta)\in B(X)\times B(X)$ is called \emph{generic} if $\cS(\alpha,\beta)\equiv[\alpha\cap\beta]\cap X\not=\emptyset$. The set $\cS(\alpha,\beta)$ is called the \emph{strip} between $\alpha$ and $\beta$. We let $\cG\subset B(X)\times B(X)$ denote the collection of generic pairs. We then have the following.

\begin{proposition}\label{GenericOpen}
Let $X$ be a strictly non-Euclidean, cocompact \cat cube complex. 
Then the set $\cG$ is a non-empty open invariant subset in $B(X)\times B(X)$.
\end{proposition}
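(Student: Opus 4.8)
\emph{Overview and invariance.} The plan is to reduce both assertions to a single elementary criterion describing which vertices of $X$ lie in a given interval of ultrafilters. Invariance is immediate: the median map on $\cU(X)$ is $\Aut(X)$-equivariant, hence so is $(\alpha,\beta)\mapsto[\alpha,\beta]$, and since $\Aut(X)$ preserves the embedded vertex set $X^{(0)}\subseteq\cU(X)$, the property $\cS(\alpha,\beta)=[\alpha,\beta]\cap X\neq\emptyset$ is $G$-invariant; so $\cG$ is $G$-invariant and it remains to show it is non-empty and open. The key claim I would establish first is: \emph{for any $\alpha,\beta\in\cU(X)$ and any vertex $v$, one has $v\in[\alpha,\beta]$ if and only if, for every hyperplane $\hk$ crossed by an edge at $v$, the halfspace of $\hk$ containing $v$ lies in $\alpha$ or in $\beta$.} Since $X$ is locally finite this is a \emph{finite} Boolean condition on the pair $(\alpha,\beta)$, which is what makes everything work.

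\emph{Proof of the criterion.} The forward implication is immediate from Lemma \ref{IntervalDescription}, because $v\in[\alpha,\beta]$ means exactly $\alpha\cap\beta\subseteq\alpha_v$. For the converse, suppose $v\notin[\alpha,\beta]$, so some halfspace $\h\in\alpha\cap\beta$ has $v\notin\h$. Let $v'$ be the vertex of $\h$ closest to $v$ (the gate of $v$ in the convex subcomplex $\h$), and let $\hk$ be the first hyperplane crossed by a $1$-skeleton geodesic from $v$ to $v'$; then $\hk$ is crossed by an edge at $v$. By the gate property, every hyperplane crossed by $[v,v']$ separates $v$ from all of $\h$, so $\h$ is contained in the side of $\hk$ not containing $v$. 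By consistency that side lies in $\alpha\cap\beta$, hence the side of $\hk$ containing $v$ lies in neither $\alpha$ nor $\beta$, contradicting the criterion applied to $\hk$. The convexity/gate step producing an \emph{adjacent} hyperplane from an arbitrary offending one is the only place real care is needed.

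\emph{Openness.} Granting the criterion, for each vertex $v$ set $\cG_v=\{(\alpha,\beta)\in B(X)\times B(X):v\in[\alpha,\beta]\}$. By the criterion $\cG_v$ is the intersection of $B(X)\times B(X)$ with a \emph{finite} intersection of sets $(U_\k\times\cU(X))\cup(\cU(X)\times U_\k)$, one for each hyperplane $\hk$ crossed by an edge at $v$, where $\k$ is the side of $\hk$ containing $v$. Each $U_\k$ is clopen in $\cU(X)$, so $\cG_v$ is clopen in $B(X)\times B(X)$, and $\cG=\bigcup_{v\in X^{(0)}}\cG_v$ is therefore open.

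\emph{Non-emptiness.} Pick $\alpha\in\nonterm(X)$ (Theorem \ref{non-empty}) and any vertex $v$. Let $F$ be the set of hyperplanes crossed by an edge at $v$ whose side \emph{not} containing $v$ lies in $\alpha$. The criterion applied with $\beta=\alpha$, together with $[\alpha,\alpha]=\{\alpha\}$ and the non-principality of $\alpha$, gives $v\notin[\alpha,\alpha]$, so $F\neq\emptyset$. The hyperplanes of $F$ pairwise cross: if $\hk,\hk'\in F$ were disjoint, then (both being crossed by edges at $v$) an inspection of carriers shows the side of $\hk$ not containing $v$ is contained in the side of $\hk'$ containing $v$, contradicting the consistency of $\alpha$, which contains both ``far'' sides. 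By the third Basic Property, $\bigcap_{\hk\in F}\hk\neq\emptyset$, so $\s:=\bigcap_{\hk\in F}\k$, with $\k$ the side of $\hk$ containing $v$, is a sector of $X$ containing $v$. By the argument in the proof of Theorem \ref{non-empty} (Remark \ref{ManyIDU} is the one-halfspace case, and the construction applies verbatim starting from the consistent, pairwise crossing family cutting out $\s$) there is a nonterminating ultrafilter $\beta\in\s$. Then for every hyperplane $\hk$ crossed by an edge at $v$, its $v$-side lies in $\alpha$ (if $\hk\notin F$) or in $\beta$ (if $\hk\in F$), so $v\in[\alpha,\beta]$ by the criterion and $(\alpha,\beta)\in\cG$. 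Thus the main obstacle is confined to the finite criterion above; once it is in hand, openness is formal and non-emptiness reduces to the already established existence of nonterminating ultrafilters, now located inside a sector rather than a single halfspace.
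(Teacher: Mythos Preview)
Your criterion for $v\in[\alpha,\beta]$ is correct, and your openness argument via that criterion is valid; it is in essence the paper's continuity-of-median argument, unwound into an explicit clopen description of each $\cG_v$.

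The non-emptiness argument, however, has a real gap. You assert that the construction of Theorem~\ref{non-empty} ``applies verbatim'' to produce a nonterminating ultrafilter inside the sector $\s=\bigcap_{\hk\in F}\k$. But that construction is tailored to a \emph{single} initial halfspace: at the inductive step, when the next hyperplane $\hh_n$ crosses $\hh_1$, it invokes the Corner Lemma to locate a hyperplane inside $\h_1$ disjoint from $\hh_n$. With several initial halfspaces $\k_1,\dots,\k_m$ one would instead need a hyperplane lying inside the whole sector $\s$, and the only result in the paper that delivers this is Proposition~\ref{SectorHyperplane}---which applies only to sectors already known to be neighborhoods of a nonterminating ultrafilter, precisely what you are trying to produce. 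Remark~\ref{ManyIDU} covers only the one-halfspace case, and the passage to sectors is not ``verbatim''.

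Worse, for some choices of $v$ the sector $\s$ collapses to a single vertex. If every hyperplane adjacent to $v$ lies in $F$ (your own crossing argument then forces them to pairwise cross, so $v$ is the free corner of a cube of dimension $\deg(v)$ with no further edges), then no edge leaves $v$ without crossing some $\hk\in F$, hence $\s\cap X^{(0)}=\{v\}$ and there is no $\beta\in\nonterm(X)$ in $\s$ at all. Since you allow $v$ to be arbitrary, this breaks the argument as written; and it is not clear how to choose $v$ (or $\alpha$) in advance to avoid this without already knowing something close to the conclusion.

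By contrast, the paper's proof of non-emptiness avoids needing a nonterminating ultrafilter in a prescribed sector. It argues by contradiction: among all pairs of distinct nonterminating ultrafilters it picks one maximizing the number $n$ of pairwise-crossing hyperplanes separating them (with an additional disjoint one beyond), then uses the failure of DCC for $\alpha\cap\beta$ together with Lemma~\ref{Corner} and Remark~\ref{ManyIDU} (applied only to a single halfspace) to produce a new nonterminating $\gamma$ so that the pair $(\alpha,\gamma)$ admits $n+1$ such hyperplanes, contradicting maximality.
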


\begin{proof} 
First, we need to show that $\cG$ is non-empty. First observe that it suffices to show this for the case that $X$ is irreducible, because the generic pairs in the products appear as products of generic pairs in the factors. 

So now assume that $X$ is irreducible and suppose that $\cG=\emptyset$. 
Let $\alpha, \beta$ be a pair of distinct nonterminating ultrafilters. By assumption we have that $S(\alpha,\beta)\cap X=\emptyset$ for every such pair. 

Since $\alpha$ and $\beta$ are nonterminating ultrafilters, there exist infinitely many hyperplanes separating them. In particular there exists a pair of disjoint hyperplanes separating them.  We thus see that given any two such $\alpha$ and $\beta$, we have that

\begin{enumerate}
\item there exists a collection of intersecting hyperplanes $\hh_1,\ldots,\hh_n$ such that
$\alpha\in\h_i$ and $\beta\in\h_i^*$,
\item there exists $\hk$ such that $\hk\in\h_i^*$ 
\item $\alpha\in\k$ and $\beta\in\k^*$.
\end{enumerate}

The above remark tells us that $(1)-(3)$ holds for $n=1$. The number $n$ is bounded by the dimension of $X$, so that we may choose $\alpha$ and $\beta$ such that $n$ is maximal. 

Note that by \ref{IntervalAsIntersection}, we have that

$$S(\alpha,\beta)=\bigcap_{\h\in\alpha\cap\beta} \h$$

Note that if $\alpha\cap\beta=\emptyset$, then all hyperplanes of $X$ separate $\alpha$ and $\beta$. In this case clearly every vertex of $X$ is in $S(\alpha,\beta)$ and we are done. 

So suppose that the above intersection is indeed an intersection of a non-empty collection of halfspaces. By Corollary \ref{NonTrivialIntersection} if $\cK=\{\h \vert \h\in\alpha\cap\beta\}$ satisfies DCC, then we would have that $S(\alpha,\beta)\not=\emptyset$. So by our assumption, we have that $\cK$ does not satisfy DCC. Let $\m_1\supset\m_2,\ldots$ denote a nonterminating sequence of halfspaces in $\cK$. 

We claim that there exists some $l$ such that $\hm_l$ intersects both $\hk$ and $\hh_i$ for all $i$. 

Since each $\hm_n\in\alpha\cap\beta$ and $\hk$ separates $\alpha$ and $\beta$, it follows that if $\hk\cap\hm_l=\emptyset$ then $\hk\subset\m_l$. But since there is a finite distance between $\hm_1$ and $\hk$, we must have that for some $l$, $\hm_l\cap\hk\not=\emptyset$. Similarly, we can choose $l$ large enough so that both $\hm_l\cap\hk\not=\emptyset$ and $\hm_l\cap\hh_i\not=\emptyset$ for all $i$. 

We can further choose $l$, so that the half-hyperplane $\hk\cap\m^*_l$ is deep. We then consider the sector $\m_l^*\cap\k^*$. By Lemma \ref{Corner}, there exists some hyperplane $\hk_1\subset\m_l^*\cap\k^*$. We let $\k_1$ denote the halfspace of $\hk_1$ contained in $\alpha$. By Remark \ref{ManyIDU}, there exists a nonterminating ultrafilter $\gamma\in\k_1^*$. 

Now observe that for the pair $\alpha, \gamma$, the collection of hyperplanes $\hh_1,\ldots,\hh_n,\hm_l$ and the hyperplane $\hk_1$ all satisfy properties $(1)-(3)$ above. But this contradicts the maximality of $n$ as chosen. 

This completes the proof that $\cG\not=\emptyset$. To prove that it is open we will use that the median map $m:\cU(X)\times\cU(X)\times\cU(X)\to \cU(X)$ is continuous in the Tychonoff topology. 
Suppose that $(\alpha, \beta)$ is a generic pair. Then there exists $v\in X^{(0)}$ such that 
$med(\alpha,\beta, v)=v$. Since vertex singletons in $X^{(0)}$ are open, we may find open sets 
$U$ of $\alpha$ and $V$ of $\beta$ such that for any $\alpha^\prime\in U$, and $\beta^\prime\in V$, we have $med(\alpha^\prime,\beta^\prime, v)=v$. Thus $S(\alpha^\prime,\beta^\prime)\not=\emptyset$, as required.  Finally, $\cG$ is clearly an invariant set under the product action of $\Aut(X)$ on $B(X)\times B(X)$. 
\end{proof}


\begin{remark}
Since $\cG$ is $\Aut(X)$-invariant, any $\sigma$-finite measure $\eta$ on $B(X)\times B(X)$ which is quasi-invariant and ergodic under a discrete subgroup $G$ and charges every open set, must assign $\cG$ full measure. 
\end{remark}

\subsection{Boundary maximality via the strip criterion}

We will now use our construction of strips in the complex, for every generic pair of points in $B(X)$, in order to show that $B(X)$ is a maximal boundary, namely realizes the Poisson boundary. 

First, recall that a probability measure $\mu$ on (a countable group) $G$ is called of  finite logarithmic moment if there exists 
a distance function $\abs{g}$ which is quasi isometric to a word metric, and satisfies 
$\sum_{g\in G}\log\abs{g} \mu(g) < \infty$.  This definition is independent of the word-metric chosen. 

Recall also that the Avez entropy of a probability measure $\mu$ on (a countable group) $G$ is defined by 
$$H(\mu)=\lim_{n\to \infty}\frac{-1}{n}\sum_{g\in G} \mu^{\ast n}(g)\log \mu^{\ast n}(g)\,\,.$$
This quantity is equal to the $\mu$-entropy of the stationary measure $\nu$ on the Poisson boundary, see below. 

The measure $\hat{\mu}$ is defined by $\hat{\mu}(g)=\mu(g^{-1})$,  and it has finite logarithmic moment and finite entropy if $\mu$ does.

Now recall the following criterion for boundary maximality, due to V. Kaimanovich.

\begin{theorem}\label{kaimanovich}{\bf Strip criterion.}
Let $\mu$ be a probability measure of finite first logarithmic moment and finite entropy on a group $G$. Assume that 
$(B,\nu)$  is a $(G,\mu)$-boundary, and $(\hat{B},\hat{\nu})$ is a $(G,\hat{\mu})$-boundary. Assume there exists a measurable map defined on $(B \times \hat{B},\nu\times \hat{\nu})$,   denoted $(\alpha,\beta)\mapsto \cS(\alpha,\beta)\subset G$  (viewed as assigning strips to pairs of boundary points). If for $\nu\times \hat{\nu}$-almost all pairs $(\alpha,\beta)$ the strip $\cS(\alpha,\beta)$ has polynomial growth (w.r.t. the distance function above), then both 
$(B,\nu)$ and $(\hat{B},\hat{\nu})$ are maximal boundaries, namely they realize the Poisson boundaries 
of $(G,\mu)$ and $(G,\hat{\mu})$.   
\end{theorem}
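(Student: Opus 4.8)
This is Kaimanovich's strip criterion; we refer to \cite{k} for the full proof and only indicate its structure here. The plan is to use the \emph{entropy characterization} of maximal boundaries: when $H(\mu)<\infty$, a $(G,\mu)$-boundary $(B,\nu)$ realizes the Poisson boundary $\cB(G,\mu)$ if and only if
$$\frac1n\,H(X_n\mid\mathrm{bnd}_+)\longrightarrow 0\qquad\text{as }n\to\infty\,,$$
where $(X_n)_{n\ge 0}$ is the $\mu$-random walk issued from the identity, $\mathrm{bnd}_+\in B$ its almost sure limit, and $H(\,\cdot\mid\,\cdot\,)$ denotes conditional Shannon entropy; equivalently, the Avez entropy $h(\mu)=\lim\frac1n H(X_n)$ is fully accounted for by the boundary point. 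So it suffices to bound $H(X_n\mid\mathrm{bnd}_+)$ from above and show it is $o(n)$.

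The device that makes the strips usable is the \emph{two-sided} random walk. Realize the $\mu$-walk and the $\hat\mu$-walk simultaneously on bi-infinite increment sequences $(\omega_k)_{k\in\ZZ}$, with positions $X_n=\omega_1\cdots\omega_n$ for $n\ge 0$ and $X_{-n}=\omega_0^{-1}\cdots\omega_{-n+1}^{-1}$; then $X_n$ converges almost surely to $\alpha:=\mathrm{bnd}_+\in B$ with law $\nu$, while $X_{-n}$ converges almost surely to $\beta:=\mathrm{bnd}_-\in\hat B$ with law $\hat\nu$. Since $\alpha$ depends only on the future increments and $\beta$ only on the past ones, $\alpha$ and $\beta$ are independent, and in fact $\beta$ is independent of the pair $(X_n,\alpha)$ for every $n\ge 0$; conditioning on an independent variable does not change conditional entropy, so $H(X_n\mid\mathrm{bnd}_+)=H(X_n\mid\alpha,\beta)$. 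Thus it is enough to prove $\frac1n H(X_n\mid\alpha,\beta)\to 0$.

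Here the strip enters. The map $(\alpha,\beta)\mapsto\cS(\alpha,\beta)\subset G$ is measurable and $G$-equivariant, and by hypothesis $\cS(\alpha,\beta)\neq\emptyset$ for $\nu\times\hat\nu$-almost every pair. Shifting the two-sided walk by $n$ steps carries $\mathrm{bnd}_\pm$ to $X_n^{-1}\mathrm{bnd}_\pm$, hence $\cS$ to $X_n^{-1}\cS(\alpha,\beta)$; since the two-sided walk is stationary, the distance from the (fixed) origin to the strip of the shifted walk, namely $d(X_n,\cS(\alpha,\beta))$, is a stationary sequence in $n$ taking almost surely finite values, and therefore grows sublinearly. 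Moreover the finite logarithmic moment forces $|X_n|=e^{o(n)}$ almost surely. Consequently, given $(\alpha,\beta)$ the variable $X_n$ is confined to the set $\{g:\,d(g,\cS(\alpha,\beta))\le o(n)\}\cap B(e,e^{o(n)})$, whose cardinality is at most $\#\big(\cS(\alpha,\beta)\cap B(e,e^{o(n)})\big)\cdot\#B(e,o(n))$. Since $\cS(\alpha,\beta)$ has polynomial growth (sub-exponential growth would already do) and $G$ is finitely generated, so that $\#B(e,r)\le e^{O(r)}$, the logarithm of this cardinality is $o(n)$, which feeds into a Shannon-type estimate to give $\frac1n H(X_n\mid\alpha,\beta)\to 0$. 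Hence $(B,\nu)=\cB(G,\mu)$; since the hypotheses are symmetric under $\mu\leftrightarrow\hat\mu$, the same argument gives $(\hat B,\hat\nu)=\cB(G,\hat\mu)$.

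The main obstacle is the final step: converting the almost sure sublinear confinement of $X_n$ near $\cS(\alpha,\beta)$ into an honest bound on the conditional entropy $H(X_n\mid\alpha,\beta)$ while assuming only a finite \emph{logarithmic} moment (rather than a finite first moment). This requires a careful Shannon-type argument, an approximation of the possibly infinite-entropy boundary $\sigma$-algebras of $\alpha$ and $\beta$ by finite partitions, and the verification that all of the entropies and limits involved are well defined and finite once $H(\mu)<\infty$.
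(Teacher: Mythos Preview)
The paper does not give a proof of this theorem; it is quoted as a result of Kaimanovich \cite{k} and used as a black box in the proof of Theorem~\ref{Poisson}. Your opening line (``we refer to \cite{k} for the full proof and only indicate its structure here'') therefore already matches the paper's treatment exactly, and everything that follows in your proposal goes beyond what the paper itself provides.

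As a sketch of Kaimanovich's argument, your outline is faithful in its main ingredients: the conditional-entropy characterization of maximality, the bilateral random walk with independent limits $\alpha,\beta$, the reduction $H(X_n\mid\alpha)=H(X_n\mid\alpha,\beta)$ via that independence, the equivariance of the strip map under the shift, and the subexponential bound $|X_n|=e^{o(n)}$ coming from the finite logarithmic moment. Your final paragraph also correctly identifies the passage from confinement of $X_n$ near the strip to an honest entropy bound (under only a logarithmic moment) as the place where the real work lies.

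One step should be tightened. You write that $D_n:=d(X_n,\cS(\alpha,\beta))$ is a stationary sequence of almost-surely finite values ``and therefore grows sublinearly'', and later refer to ``almost sure sublinear confinement''. Stationarity does give $D_n/n\to 0$ in probability, since $P(D_n>\varepsilon n)=P(D_0>\varepsilon n)\to 0$, but it does \emph{not} give almost-sure convergence in general (an i.i.d.\ sequence with $P(D_0>t)\sim 1/t$ is a counterexample). Kaimanovich's actual estimate does not route through an almost-sure bound on $D_n$; rather, $H(X_n\mid\alpha,\beta)$ is controlled directly by splitting off a measurable selection into $\cS(\alpha,\beta)\cap B(e,|X_n|)$ and then using shift-invariance to identify the residual term with a fixed finite quantity at time~$0$. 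This is precisely the Shannon-type step you already flag as the main obstacle, so only the informal route via an almost-sure bound on $D_n$ needs to be revised.
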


We will use this criterion to prove :
\begin{theorem}\label{Poisson}
Let $X$ be an irreducible non-Euclidean \cat cube complex. Let $G$ be a discrete 
subgroup of $\Aut(X)$ acting properly and co-compactly on the complex. Let $\mu$ be a probability measure on $G$ of finite logarithmic moment and finite entropy, whose support generates $G$ as a semigroup. Denote the unique $\mu$-stationary measure on $B(X)$ by $\nu$.  Then $(B(X),\nu)$ is a compact metric model of the Poisson boundary of $(G,\mu)$. 
\end{theorem}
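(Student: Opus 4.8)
The plan is to invoke Kaimanovich's strip criterion (Theorem~\ref{kaimanovich}) with $B = \hat{B} = B(X)$, $\nu$ the unique $\mu$-stationary measure and $\hat{\nu}$ the unique $\hat{\mu}$-stationary measure, which exist and make $(B(X),\nu)$ and $(B(X),\hat\nu)$ into $\mu$- and $\hat\mu$-boundaries respectively by Theorem~\ref{B(X) UMSP} (applied to $\mu$ and to $\hat\mu$; note $\hat\mu$ also generates $G$ as a semigroup, and finite logarithmic moment and finite entropy pass to $\hat\mu$). What remains is to produce the strip assignment $(\alpha,\beta)\mapsto \cS(\alpha,\beta)\subset G$ defined for $\nu\times\hat\nu$-almost every pair and verify it has polynomial growth.

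First I would use the strip $\cS(\alpha,\beta) = [\alpha\cap\beta]\cap X^{(0)}$, the set of vertices of $X$ lying in the interval between $\alpha$ and $\beta$ (transported to a subset of $G$ via a choice of finitely many orbit representatives and the proper action, exactly as in the passage from $\tilde\beta_n$ to $\beta_n$ in the proof of Theorem~\ref{universal}). By Proposition~\ref{GenericOpen} the set $\cG$ of generic pairs---those for which this strip is non-empty---is a non-empty open invariant subset of $B(X)\times B(X)$. Then I would invoke the Remark following Proposition~\ref{GenericOpen}: since $\nu\times\hat\nu$ is quasi-invariant and ergodic under $G$ (ergodicity of $\nu\times\hat\nu$ is where one uses that $(B(X),\nu)$ is a $\mu$-boundary and $(B(X),\hat\nu)$ a $\hat\mu$-boundary---the product of a boundary and a reverse-boundary carries an ergodic stationary measure, a standard fact from Kaimanovich's framework) and charges every open set (again standard for stationary measures of a minimal strongly proximal action, which $B(X)$ is by Theorem~\ref{B(X) UMSP}), it must assign $\cG$ full measure. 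Hence the strip is non-empty almost everywhere. Measurability of $(\alpha,\beta)\mapsto\cS(\alpha,\beta)$ follows from the continuity/Borel nature of the median map on $\cU(X)^3$ and formula~\eqref{IntervalAsIntersection}.

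The polynomial growth requirement is where the work of Section~6 pays off. I would recall the theorem of \cite{BC+} cited before \S6.1: an interval in a finite-dimensional \cat cube complex embeds $\ell_1$-isometrically into a standard Euclidean cube complex in some $\RR^N$ with $N$ bounded by $\dim X$. Consequently $[\alpha,\beta]\cap X^{(0)}$, being a subset of the vertex set of a convex region in $\RR^N$ with the $\ell_1$ metric, has at most polynomial growth of degree $N$: the number of its vertices in a ball of radius $r$ is $O(r^N)$. Since the $\ell_1$ (equivalently $1$-skeleton) metric on $X$ is quasi-isometric to a word metric on $G$ (the action is proper and cocompact, so the orbit map $G\to X^{(0)}$ is a quasi-isometry by the Milnor--Švarc lemma), the transported strip in $G$ has polynomial growth with respect to a word metric, uniformly in $(\alpha,\beta)$, and in particular for $\nu\times\hat\nu$-almost every pair. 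This verifies all hypotheses of Theorem~\ref{kaimanovich}, which then yields that $(B(X),\nu)$ is a maximal boundary, i.e.\ a model of the Poisson boundary of $(G,\mu)$. Finally, since $B(X)$ is compact metrizable, it is a compact metric model, as claimed.

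The main obstacle I anticipate is not the geometry---the polynomial growth of strips is essentially handed to us by the $\RR^N$-embedding of \cite{BC+}---but the measure-theoretic bookkeeping: one must carefully justify that $\nu\times\hat\nu$ is ergodic under the diagonal $G$-action and charges open sets so that the Remark after Proposition~\ref{GenericOpen} applies, and one must ensure the map sending a finite vertex set in $X$ to a finite subset of $G$ is genuinely measurable and preserves the polynomial growth bound after the orbit-to-group transport. These are standard in Kaimanovich's setup but deserve a careful line or two; the rest is assembling results already in hand.
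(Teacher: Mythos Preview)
Your proposal is correct and follows essentially the same route as the paper: apply Kaimanovich's strip criterion with the strip $\cS(\alpha,\beta)=[\alpha,\beta]\cap X^{(0)}$, use Proposition~\ref{GenericOpen} together with ergodicity of $\nu\times\hat\nu$ (inherited from the product of Poisson boundaries) and full support of stationary measures on the minimal space $B(X)$ to get $\cG$ of full measure, and deduce polynomial growth from the $\ell_1$-embedding of intervals into $\RR^N$ from \cite{BC+} combined with the Milnor--\v{S}varc quasi-isometry. The only cosmetic difference is in the transport of strips from $X$ to $G$: the paper fixes a single vertex type $t_1$ and uses ergodicity once more to ensure it appears in almost every strip, whereas you propose summing over all orbit types as in the proof of Theorem~\ref{universal}; both yield an equivariant measurable assignment with polynomial growth.
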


\begin{proof}
  
 Any stationary measure is $G$-quasi-invariant, and hence its support is a closed non-empty $G$-invariant set. In Theorem \ref{B(X) limit set} we have shown that the $G$-action on $B(X)$ is minimal, and hence the support of a quasi-invariant measure coincides with $B(X)$. In particular, the measure of every non-empty open set is strictly positive. 

Consider now the $\Aut(X)$-equivariant map $(\alpha,\beta)\mapsto \cS(\alpha,\beta)$ from 
$B(X)\times B(X)$ to strips in $X$. We first claim that for a set of pairs $(\alpha,\beta)\in B(X)\times B(X)$ of $\nu\times \hat{\nu}$-measure $1$, the strips are actually subsets of the complex, namely  that the set of $\cG$ of generic pairs has full $\nu\times\hat{\nu}$-measure. 

The action of the product of the Poisson boundaries associated with $\mu$ and $\hat{\mu}$  is ergodic, in general (see \cite{k}), 
and thus so is the action 
on any of its $G$-factor spaces. By Theorem  \ref{B(X) UMSP}  the space $B(X)$ is a $(G,\mu)$-boundary and thus $(B(X),\nu)\times (B(X),\hat{\nu})$ is a factor of the product of  
the Poisson boundaries, so that $\nu\times\hat{\nu}$ on $B(X)\times B(X)$ is ergodic. 

The set $\cG$  of generic pairs is invariant under the $G$-action on $B(X)\times B(X)$, and clearly has positive $\mu\times\hat{\mu}$-measure, since it is non-empty and open, and the product measure charges every non-empty open set. By ergodicity, $\cG$ has full  $\nu\times \hat{\nu}$-measure. 

We now pass to strips in the group $G$ itself rather than in the complex. To that end, note that by co-compactness of $G$, it has only finitely many orbits of vertices in $X$, and let us call the different orbits "types" denoted $t_1,\dots ,t_r$, and choose a vertex in each orbit $v_1,\dots,v_r$. For each generic pair $(\alpha,\beta)$, the strip $\cS(\alpha,\beta)\subset X$ decomposes to a disjoint union of vertices belonging to these types.  Clearly there exist at least one type (say $t_1$) such that for a positive measure subset of generic pairs, the associated strip contains  vertices of  type $t_1$. But the latter set of generic pairs is clearly $G$-invariant, and so necessarily has full measure. We now define the map  $(\alpha,\beta)\mapsto \cS^\prime(\alpha,\beta)\subset G$ to strips in the group as follows. 
 $S^\prime(\alpha,\beta)$ is defined to be the union of all the cosets 
$gSt_G(v_1)$, as $g$ ranges over all group elements with the property that $gv_1$ is in $\cS(\alpha,\beta)$, namely $gv_1$ is a vertex of type $t_1$ in that strip. The map is clearly equivariant, and the stability group $St_G(v_1)$ is finite. Now $\cS(\alpha,\beta)\subset X$ has polynomial growth since it is contained in an interval and thus embeds in $\RR^d$ \cite{BC+}. The polynomial growth is with respect to the $\ell^1$-metric on vertices in $X$, and hence $\cS^\prime(\alpha,\beta)$ has polynomial growth with respect to a distance function quasi-isometric to a word metric on $G$. The desired result now follows from Theorem \ref{kaimanovich}. 
%
\end{proof}

\section{Entropy and the Poisson boundary}
Theorem \ref{Poisson} establishes, in particular, that for measures $\mu$ on $G$ with finite logarithmic moment, $B(X)$ gives rise to a compact metric uniquely-stationary model of the  Poison boundary. In the present section we would like to demonstrate that  
the existence of such a model for the Poisson boundary is a significant fact, which has important consequence for the boundary theory of a countable group $G$. 

First let us  recall the definition of $\mu$-entropy of a standard Borel $(G,\mu)$-space (see \cite{nz1} for a detailed discussion) 
\begin{definition}\label{mu-entropy} 
The $\mu$-entropy of a $(G,\mu)$-space $(B,\nu)$ is defined for a countable group $G$ by 
$$h_\mu(B,\nu)=\sum_{g\in G }\mu(g)\int_B-\log \frac{dg^{-1}\nu}{d\nu}(b)d\nu(b)$$
\end{definition}
As noted in \S 8, the Avez entropy $H(\mu)$ coincides with the $\mu$-entropy of the Poisson boundary. Furthermore, this value constitutes the largest value that the $\mu$-entropy can assume, ranging over all $(G,\mu)$-spaces $(B,\nu)$. 
Recall that the Poisson boundary is the  unique (up to $\nu$-null sets) maximal standard Borel $(G,\mu)$-space which is a $\mu$-boundary. Here maximality means that any other $(G,\mu)$-space is a factor of the Poisson boundary, with the factor map  uniquely determined, up to $\nu$-null sets.  
Recall also that the action on the Poisson boundary is amenable in the sense of Zimmer (see \cite{Z2} for  a detailed discussion).

\begin{theorem}\label{unique stationarity}{\bf Characterization of the Poisson boundary.}

Let $G$ be a countable group, $\mu$ a probability measure whose support generates $G$ as a semigroup, and assume that there exists a compact metric $G$-space $B$, which admits a unique $\mu$-stationary measure $\nu$, such that $(B,\nu)$ realizes the Poisson boundary of $(G,\mu)$. Then every amenable $(G,\mu)$-space is a measure-preserving extension of the Poisson boundary 
of $(G,\mu)$, and thus has maximal $\mu$-entropy.  In particular, an amenable $(G,\mu)$-boundary space is (essentially) isomorphic to the Poisson boundary of $(G,\mu)$. Thus the Poisson boundary is characterized as 
\begin{enumerate}
\item  the unique minimal amenable $(G,\mu)$-space (i.e. it is a factor of every other amenable $(G,\mu)$-space) 
\item the unique maximal $(G,\mu)$-boundary space (i.e. every other $(G,\mu)$-boundary space is a factor of $(B,\nu)$) 
\item  the unique $(G,\mu)$-boundary space which is amenable,   
\item the unique $(G,\mu)$-boundary space of maximal $\mu$-entropy.  

\end{enumerate}

\end{theorem}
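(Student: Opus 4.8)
The plan is to combine the uniqueness of the stationary measure on $B$ with the standard entropy theory of the Poisson boundary, following the strategy pioneered by Nevo--Zimmer and Kaimanovich. First I would recall the two facts that drive everything: (i) an amenable $(G,\mu)$-space $(A,\lambda)$ admits a $G$-equivariant measurable map to $P(B)$, the space of probability measures on the compact metric space $B$, because amenability in the sense of Zimmer gives equivariant sections into affine $G$-spaces with compact convex fibres, and $P(B)$ is exactly such a fibre; and (ii) pushing the stationary measure $\lambda$ on $A$ forward through this map produces a $\mu$-stationary point in $P(P(B))$, whose barycentre is a $\mu$-stationary probability measure on $B$ — hence equals $\nu$ by the uniqueness hypothesis. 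This already forces the equivariant map $A\to P(B)$ to land, $\lambda$-almost everywhere, in the single point $\nu$ after conditioning appropriately; more precisely, combining stationarity with the martingale convergence that defines the Poisson boundary $\Pi=\Pi(G,\mu)$, the map $A\to P(B)$ must factor through the boundary map $\Pi\to P(B)$ which sends a point $\xi\in\Pi$ to the limit measure $\lim_n \omega_1\cdots\omega_n\nu = \delta_{\mathrm{bnd}(\xi)}$ — here we use that $(B,\nu)$ is a $\mu$-boundary, so these limits are Dirac. So I would show that the composite $A\to P(B)$ actually takes values in $B\subset P(B)$ and realizes the (essentially unique) factor map from $\Pi$ onto $B$, using that $(B,\nu)$ being a model of the Poisson boundary means $\Pi\cong (B,\nu)$.

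Next I would assemble the entropy inequality. The universal property of $\Pi$ gives, for any $(G,\mu)$-space $(A,\lambda)$, a factor map $\Pi\to A$, hence $h_\mu(A,\lambda)\le h_\mu(\Pi)=H(\mu)$, with equality iff $A$ is a measure-preserving extension of $\Pi$ (this is the Kaimanovich--Vershik / Nevo--Zimmer characterization of maximal entropy, which I would cite). Conversely, when $A$ is amenable, the argument of the previous paragraph produces a factor map $A\to \Pi$ (through $A\to P(B)=P(\Pi)$ landing in $\Pi$). Having factor maps in both directions $\Pi\to A\to\Pi$ whose composite is the identity on $\Pi$ (up to null sets, again by the essential uniqueness of factor maps out of $\Pi$) forces $A\to\Pi$ to be a relatively measure-preserving extension — indeed the fibre measures of $A$ over $\Pi$ must be $\mu$-stationary conditionally, and stationarity of a family of measures over the Poisson boundary forces them to be invariant, i.e. the extension is measure-preserving. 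Hence $h_\mu(A,\lambda)=H(\mu)$.

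Finally I would read off the four itemized characterizations. Item (1): every amenable $(G,\mu)$-space factors onto $\Pi$ by the above, and $\Pi$ is itself amenable in Zimmer's sense, so it is the unique minimal one. Item (2): this is just the defining universal property of the Poisson boundary among $\mu$-boundaries. Item (3): a $\mu$-boundary $(A,\lambda)$ that is also amenable is simultaneously a factor of $\Pi$ (item 2) and, being amenable, has $\Pi$ as a measure-preserving extension of it — but a measure-preserving extension that is also a $\mu$-boundary must be an isomorphism, since $\mu$-boundaries have no nontrivial measure-preserving extensions within the category of $(G,\mu)$-spaces (a measure-preserving extension of a $\mu$-boundary has the same entropy and remains a $\mu$-boundary only if it is trivial over the base; alternatively, the tail/exit structure collapses). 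Item (4): immediate from the entropy inequality $h_\mu\le H(\mu)$ and the fact that among $\mu$-boundaries, attaining $H(\mu)$ means the factor map $\Pi\to A$ is measure-preserving, hence an isomorphism as in item (3).

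I expect the main obstacle to be the careful justification, at the measure-theoretic level, of the step asserting that the equivariant map $A\to P(B)$ (guaranteed by amenability) actually descends to a genuine $(G,\mu)$-factor map $A\to B$ rather than merely a map into $P(B)$: one has to run the martingale convergence argument fibrewise and invoke both the uniqueness of $\nu$ and the $\mu$-boundary property of $(B,\nu)$ to collapse the measure-valued map to a point-valued one $\lambda$-a.e., and to identify the resulting map with the canonical factor $\Pi\to B$. The entropy bookkeeping and the final four-part unwinding are then comparatively routine given the results cited from \cite{k}, \cite{nz1}, \cite{Z2}.
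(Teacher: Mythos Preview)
Your overall strategy---use Zimmer amenability to produce an equivariant map $A\to P(B)$, then collapse it to a factor map $A\to B$ using that $(B,\nu)$ realizes the Poisson boundary, and finish with entropy---matches the paper's. But there is a genuine error in your second paragraph: you assert that ``the universal property of $\Pi$ gives, for any $(G,\mu)$-space $(A,\lambda)$, a factor map $\Pi\to A$.'' This is false. The Poisson boundary factors onto $\mu$-\emph{boundaries}, not onto arbitrary stationary $(G,\mu)$-spaces; for instance $\Pi\times Z$ with $Z$ a nontrivial finite set carrying the trivial $G$-action is an amenable $(G,\mu)$-space admitting no map from $\Pi$ (such a map would yield a nontrivial measure-preserving factor of $\Pi$, which a $\mu$-boundary cannot have). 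Your ``factor maps in both directions $\Pi\to A\to\Pi$'' therefore breaks down for general amenable $A$. The inequality $h_\mu(A,\lambda)\le H(\mu)$ is still true, but must be justified differently---e.g.\ directly from the definition of $\mu$-entropy and the Kaimanovich--Vershik characterization, or via the Furstenberg structure theorem---not via a nonexistent map $\Pi\to A$.

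On the obstacle you flag---collapsing $A\to P(B)$ to a point-valued map $A\to B$---the paper avoids your fibrewise martingale detour entirely. Since $(B,\nu)$ is the Poisson boundary it is in particular $\mu$-proximal, and then \cite[Ch.~VI, Cor.~2.10]{M} says directly that any $\mu$-stationary measure on $P(B)$ is supported on the set of Dirac measures. Hence $\phi_*(\lambda)$ is supported on $B\subset P(B)$ immediately, and uniqueness of $\nu$ identifies the image measure with $\nu$; no martingale argument is required. For item~(3) the paper is also cleaner: an amenable $\mu$-boundary $A$ is simultaneously a factor of $\Pi$ (as a $\mu$-boundary) and has $\Pi$ as a factor (by item~(1)), so the composite $\Pi\to A\to\Pi$ is an equivariant endomorphism of a $(G,\mu)$-boundary, which is necessarily the identity by \cite[Ch.~VI, Cor.~2.10]{M}. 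Both maps are therefore isomorphisms, bypassing your relative-measure-preserving discussion altogether.
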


\begin{proof}

1) To prove the first characterization,  let us begin by recalling that if $(Y,\nu)$ is any amenable action of $G$ with $\eta$ a quasi-invariant probability measure, and $B$ any compact metric $G$-space, then there exists a $G$-equivariant map $\phi : Y\to P(B)$, where $P(B)$ is the space of probability measures on $B$ and $\phi$ is defined $\eta$-almost everywhere \cite[4.3.9]{zbook}. If $\eta=\nu_Y$ is $\mu$-stationary, its image under $\phi$ denoted $\phi_\ast(\nu_Y)$ is a $\mu$-stationary measure on $P(B)$. 
Now if $(B,\nu)$ is a $\mu$-proximal, then the measure $\phi_\ast(\nu_Y)$ must take values in $\delta$-measures on $B$ almost surely \cite[Ch. VI, Cor 2.10]{M}. Therefore $\phi$ arises from a measurable $G$-equivariant factor map $\phi^\prime(Y,\nu_Y)\to B$. Clearly if the stationary measure $\nu$ on $B$ is unique, then $\phi^\prime_\ast(\nu_Y)$ and $\nu$ must coincide,  so that $(B,\nu)$ is indeed a factor of $(Y,\nu_Y)$ and thus $(B,\nu)$ is indeed a minimal amenable space. 

2) The second characterization is well-known to be valid for any Poisson boundary. 

3) The third characterization follows from the fact that an amenable $(G,\mu)$-boundary space is both a cover and a factor of the Poisson boundary $(B,\nu)$, and hence admits an equivariant endomorphism. But for a $(G,\mu)$-boundary space such an endomorphism is necessarily the identity  \cite[Ch. VI, Cor. 2.10] {M}. Thus the space in question is isomorphic to $(B,\nu)$. 

4) Any $(G,\mu)$-boundary space is a factor of the Poisson boundary $(B,\nu)$, and its $\mu$-entropy 
is bounded by $h_\mu(B,\nu)$. Every proper factor of $(B,\nu)$ has strictly smaller $\mu$-entropy, since 
otherwise the Poisson boundary would be a measure-preserving extension of this factor space (see \cite{nz1}). This is not possible, because every bounded function on $(B,\nu)$ is determined uniquely by its harmonic transform,  so that $(G,\mu)$-boundaries do not admit relatively-measure-preserving factors, 
as these produce distinct functions with the same harmonic transform.  
\end{proof}

\end{document}